\numberwithin{equation}{section}
\newtheorem{theorem}{Theorem}[section]
\newtheorem{lemma}[theorem]{Lemma}
\newtheorem{thm}[theorem]{Theorem}
\newtheorem{rmk}[theorem]{Remark}
\newcommand{\Rmnum}[1]{\expandafter\@slowromancap\romannumeral #1@}
\begin{document}
\title{Estimates of eigenvalues and eigenfunctions in elliptic homogenization with rapidly oscillating potentials}
\author{Yiping Zhang\footnote{Email:zhangyiping161@mails.ucas.ac.cn}\\\small{Academy of Mathematics and Systems Science, CAS;}\\
\small{University of Chinese Academy of Sciences;}\\
\small{Beijing 100190, P.R. China.}}
\date{}
\maketitle
\begin{abstract}
In this paper, for a family of second-order elliptic equations with rapidly oscillating periodic coefficients and rapidly oscillating periodic potentials, we are interested in the $H^1$ convergence rates and the Dirichlet eigenvalues and bounds of the normal derivatives of Dirichlet eigenfunctions. The $H^1$ convergence rates rely on the Dirichlet correctors and the first-order corrector for the oscillating potentials. And  the bound results rely on
an $O(\varepsilon)$ estimate in $H^1$ for solutions with Dirichlet condition.
\end{abstract}

\section{Introduction}
This paper concerns the $H^1$ convergence rates and the asymptotic behavior of Dirichlet eigenvalues and eigenfunctions for a family of second-order elliptic equations with rapidly oscillating coefficients and rapidly oscillating potentials arising from A. Bensoussan, J.-L. Lions and G. Papanicolaou in \cite{1978Asymptotic}. More precisely, consider
\begin{equation}
\left\{
\begin{aligned}
\mathcal{L}_{\varepsilon} u_{\varepsilon} \equiv-\operatorname{div}\left({A}(x/\varepsilon)\nabla u_\varepsilon\right)+\frac{1}{\varepsilon}W\left(x/\varepsilon\right) u_\varepsilon&=f  \text { in } \Omega, \\
u_{\varepsilon}&=0  \text { on } \partial \Omega.
\end{aligned}\right.\ \varepsilon>0,
\end{equation}where $\Omega$ is a bounded Lipschitz domain (the summation convention is used throughout the paper), and $W\in L^\infty(Y)$ is 1-periodic with $\int_Y W(y)dy=0$, where $Y=[0,1)^d\simeq \mathbb{R}^d/\mathbb{Z}^d$ for $d\geq 2$. We will always assume that $A(y)=(a_{ij}(y))$ with $1\leq i, j\leq d$ is real, symmetric ($A^*=A$), and satisfies the ellipticity condition
\begin{equation}
\kappa |\xi|^2\leq a_{ij}(y)\xi_i\xi_j \leq \kappa^{-1}|\xi|^2 \text{\quad for }y\in \mathbb{R}^d \text{ and }\xi\in \mathbb{R}^d,
\end{equation}where $\kappa\in (0,1)$, and the 1-periodicity condition
\begin{equation}
A(y+z)=A(z) \text{\quad for }y\in \mathbb{R}^d\text{ and }z\in \mathbb{Z}^d,
\end{equation}
as well as  the $VMO(\mathbb{R}^d)$ smoothness condition. In order to quantify the smoothness, we will impose the following condition:
\begin{equation}
\sup_{x\in \mathbb{R}^d}\fint_{B(x,t)}|A-\fint_{B(x,t)}A|\leq \rho(t), \text{\quad for }0<t\leq 1,
\end{equation}
where $\rho$ is a nondecreasing continuous function on $[0,1]$ with $\rho(0)=0$.

It is more or less well-known that the effective equation is given by
\begin{equation}\left\{
\begin{aligned}
\mathcal{L}_{0} u_0 \equiv-\operatorname{div}\left(\widehat{A}\nabla u_0\right)+\mathcal{M}(W\chi_w)u_0 &=f  \text { in } \Omega, \\
u_{0}&=0  \text { on } \partial \Omega,
\end{aligned}\right.
\end{equation}
under some suitable conditions (see section 2). The operator $\widehat{A}=(\widehat{a}_{ij}):\Omega\times\mathbb{R}^n\mapsto\mathbb{R}^n$ is a constant matrix defined as

\begin{equation}\widehat{a}_{i j}=\fint_{Y}\left[a_{i j}+a_{i k} \frac{\partial}{\partial y_{k}}\left(\chi_{j}\right)\right](y) d y,\end{equation}
where $\chi_{k}(y)$ is the unique solution of the cell-problem
\begin{equation}\left\{
\begin{aligned}
-\frac{\partial}{\partial y_{i}}\left(a_{i j}\left(y\right) \frac{\partial}{\partial y_{j}}\left(\chi_{k}(y)+y_k\right)\right)=0  \text { in } Y \\
\chi_{k}\  is\  \text{1-periodic},\ \fint_Y\chi_{k}(y)dy=0,
\end{aligned}\right.
\end{equation}
for $k=1,2,\cdots,d$. It is well-known that the coefficient matrix $(\widehat{a}_{ij})$ satisfies the uniformly elliptic condition $\kappa |\xi|^2\leq \widehat{a}_{ij}\xi_i\xi_j \leq \kappa_1|\xi|^2$, where $\kappa_1$ depends only on $\kappa$ and $d$ (for example, see \cite{1978Asymptotic}). And $$\mathcal{M}(W\chi_w)=:\fint_Y (W\chi_w)(y)dy,$$
where $\chi_w$ is the unique solution of the cell-problem
\begin{equation}\left\{
\begin{aligned}
\frac{\partial}{\partial y_{i}}\left(a_{i j}\left(y\right) \frac{\partial}{\partial y_{j}}\chi_w(y)\right)=W(y) \text { in } Y, \\
\chi_w\  is\  \text{1-periodic},\ \fint_Y\chi_{w}(y)dy=0.
\end{aligned}\right.
\end{equation}
The following theorem states that $\mathcal{L}_0$ is the homogenized operator for $\mathcal{L}_\varepsilon$, as $\varepsilon\rightarrow 0$.
\begin{thm}
Assume that $(1.2)$, $(1.3)$ and $(1.4)$ hold true, $f\in L^2(\Omega)$ and $\mathcal{M}(W\chi_w)>-\lambda_{0,1}'$ with $\lambda_{0,1}'$ being
the first eigenvalue of $\mathcal{L}_0'=-\operatorname{div}(\widehat{A}(\nabla\cdot))$ in $\Omega$ for the Dirichlet's boundary condition, where $\Omega$
is a bounded Lipschitz domain, then there exists $\varepsilon_1\in (0,1)$, depending only on $A$, $W$, $d$ and $\Omega$, such that the equation $(1.1)$ admits a
unique solution $u_\varepsilon$ for $0<\varepsilon\leq\varepsilon_1$, and
$$u_\varepsilon\rightharpoonup u_0\quad \text{weakly in }\  H^1_0(\Omega), \text{ as }\varepsilon\rightarrow 0,$$
where $u_0\in H^2(\Omega)\cap H^1_0(\Omega)$ is the solution of $(1.5)$ .
\end{thm}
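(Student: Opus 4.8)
The plan is to combine a uniform G\aa rding inequality for $\mathcal L_\varepsilon$, obtained after rewriting the singular zeroth-order term through the potential corrector $\chi_w$, with Tartar's oscillating test function method, and then to deduce existence, uniqueness and the uniform a priori bound for $(1.1)$ from the homogenization itself together with the hypothesis $\mathcal M(W\chi_w)>-\lambda'_{0,1}$. First, since $\operatorname{div}_y(A\nabla_y\chi_w)=W$, one has the exact identity $\frac1\varepsilon W(x/\varepsilon)=\operatorname{div}_x\!\big[(A\nabla\chi_w)(x/\varepsilon)\big]$, so for $u,v\in H^1_0(\Omega)$ the bilinear form of $\mathcal L_\varepsilon$ is $B_\varepsilon(u,v)=\int_\Omega A(x/\varepsilon)\nabla u\cdot\nabla v\,dx-\int_\Omega(A\nabla\chi_w)(x/\varepsilon)\cdot\nabla(uv)\,dx$. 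Because $A\in VMO$, the $W^{1,p}$ theory for the cell problem gives $\chi_w\in W^{1,p}(Y)$ for every finite $p$, hence $(A\nabla\chi_w)(\cdot/\varepsilon)$ is bounded in $L^p(\Omega)$ uniformly in $\varepsilon$; estimating $-\int(A\nabla\chi_w)(x/\varepsilon)\cdot\nabla(v^2)=-2\int(A\nabla\chi_w)(x/\varepsilon)\,v\cdot\nabla v$ by H\"older (as $L^p\times L^q\times L^2$ with $q$ close to $2$), the Gagliardo--Nirenberg inequality and Young's inequality yields a G\aa rding inequality $B_\varepsilon(v,v)\ge c_0\|v\|_{H^1_0}^2-C_0\|v\|_{L^2}^2$ with $c_0>0$, $C_0\ge0$ independent of $\varepsilon$, together with boundedness of $B_\varepsilon$ on $H^1_0(\Omega)\times H^1_0(\Omega)$; in particular $\mathcal L_\varepsilon:H^1_0(\Omega)\to H^{-1}(\Omega)$ is Fredholm of index $0$.

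For the limit I would use Tartar's method with the full corrector: for $\psi\in C_c^\infty(\Omega)$ set $\psi_\varepsilon(x)=\psi(x)+\varepsilon\chi_k(x/\varepsilon)\partial_k\psi(x)+\varepsilon\chi_w(x/\varepsilon)\psi(x)\in H^1_0(\Omega)$, so $\psi_\varepsilon\to\psi$ in $L^2(\Omega)$ and $\psi_\varepsilon\rightharpoonup\psi$ in $H^1(\Omega)$. Expanding $\mathcal L_\varepsilon\psi_\varepsilon$ and using only the divergence-form cell equations $\operatorname{div}_y\big(A(\nabla_y\chi_k+e_k)\big)=0$ and $\operatorname{div}_y(A\nabla_y\chi_w)=W$, so that $A$ itself is never differentiated, one finds that the term $-\frac1\varepsilon W(x/\varepsilon)\psi$ produced by $-\operatorname{div}_x\!\big[(A\nabla_y\chi_w)(x/\varepsilon)\psi\big]$ exactly cancels the term $\frac1\varepsilon W(x/\varepsilon)\psi$ coming from the potential, and $\mathcal L_\varepsilon\psi_\varepsilon$ collapses to a finite sum of terms of the form $g(x/\varepsilon)\cdot(\text{fixed smooth function})$ plus $\varepsilon\operatorname{div}(\text{bounded in }L^2)$. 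Each such term is bounded in $L^2(\Omega)$, hence precompact in $H^{-1}(\Omega)$, and converges weakly to the $Y$-average of $g$ times the smooth factor, hence strongly in $H^{-1}(\Omega)$; summing the limits and using the identity $\mathcal M(W\chi_k)=\big(\fint_Y A\nabla_y\chi_w\big)_k$, which is immediate from $W=\operatorname{div}_y(A\nabla_y\chi_w)$ and the $\chi_k$ equation, the would-be first-order drift contributions cancel and $\mathcal L_\varepsilon\psi_\varepsilon\to\mathcal L_0\psi$ in $H^{-1}(\Omega)$. Hence, whenever $v_\varepsilon$ is bounded in $H^1_0(\Omega)$ with $\mathcal L_\varepsilon v_\varepsilon\to g$ in $H^{-1}(\Omega)$, a subsequence has $v_\varepsilon\rightharpoonup v_0$ in $H^1_0(\Omega)$ and $v_\varepsilon\to v_0$ in $L^2(\Omega)$ (Rellich), and passing to the limit in $\langle v_\varepsilon,\mathcal L_\varepsilon\psi_\varepsilon\rangle=\langle\mathcal L_\varepsilon v_\varepsilon,\psi_\varepsilon\rangle$ (self-adjointness of $\mathcal L_\varepsilon$), the left side tending to $B_0(v_0,\psi)$ and the right to $\langle g,\psi\rangle$, gives $\mathcal L_0 v_0=g$ weakly.

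To conclude, the hypothesis $\mathcal M(W\chi_w)>-\lambda'_{0,1}$ makes $B_0(v,v)=\int_\Omega\widehat A\nabla v\cdot\nabla v+\mathcal M(W\chi_w)\int_\Omega v^2$ coercive on $H^1_0(\Omega)$ (split off a small multiple of $\int_\Omega\widehat A\nabla v\cdot\nabla v$ and use $\int_\Omega\widehat A\nabla v\cdot\nabla v\ge\lambda'_{0,1}\int_\Omega v^2$), so $(1.5)$ has a unique solution $u_0\in H^1_0(\Omega)$ and the limit produced above is uniquely determined, which upgrades subsequential convergence to convergence of the whole family. For well-posedness of $(1.1)$ at small $\varepsilon$, suppose there were $\varepsilon_k\to0$ and $v_k\in H^1_0(\Omega)$ with $\|v_k\|_{H^1_0}=1$ and $\mathcal L_{\varepsilon_k}v_k\to0$ in $H^{-1}(\Omega)$; this single scenario covers both a nontrivial kernel of $\mathcal L_{\varepsilon_k}$ and a putative failure of a uniform a priori bound. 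Then $v_k\rightharpoonup v_0$ with $\mathcal L_0 v_0=0$, so $v_0=0$, hence $\|v_k\|_{L^2}\to0$, and the G\aa rding inequality together with $B_{\varepsilon_k}(v_k,v_k)=\langle\mathcal L_{\varepsilon_k}v_k,v_k\rangle\to0$ forces $\|v_k\|_{H^1_0}\to0$, a contradiction. Thus there is $\varepsilon_1\in(0,1)$, depending only on $A,W,d,\Omega$, so that for $0<\varepsilon\le\varepsilon_1$ the operator $\mathcal L_\varepsilon$ is injective, hence invertible (being Fredholm of index $0$), with $\|u_\varepsilon\|_{H^1_0}\le C\|f\|_{L^2}$ uniformly; applying the limit step with $g=f$ yields $u_\varepsilon\rightharpoonup u_0$ in $H^1_0(\Omega)$, and $u_0\in H^2(\Omega)$ follows from the classical $H^2$ regularity estimate for the constant-coefficient operator $\mathcal L_0$ with right-hand side in $L^2$. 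The main obstacle throughout is the genuinely singular $O(\varepsilon^{-1})$ potential: it cannot be absorbed by a crude estimate and must be treated structurally via $\chi_w$, exploiting its exact divergence form in the a priori inequality and the cancellation it induces against the corrector term in $\psi_\varepsilon$ in the limit; moreover the low $W^{1,p}$ (rather than $W^{1,\infty}$) regularity of $\chi_k$ and $\chi_w$ forced by $A\in VMO$ means every manipulation must be carried out in weak form with $L^p$ in place of $L^\infty$ bounds.
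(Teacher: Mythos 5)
Your argument is correct, but it follows a genuinely different route from the paper on both of its two main steps. For well-posedness, the paper does not use a G\aa rding-plus-Fredholm-plus-compactness scheme: it proves a quantitative first-eigenvalue asymptotic (Theorem 2.1), via the multiplicative substitution $v_\varepsilon=(1+\varepsilon\chi_w^\varepsilon)\phi_\varepsilon$ and the known rate $|\lambda'_{\varepsilon,1}-\lambda'_{0,1}|\le C\varepsilon$ from Kenig--Lin--Shen, and then reads off $\lambda_{\varepsilon,1}>0$, hence genuine coercivity $\langle\mathcal L_\varepsilon u,u\rangle\ge c\|u\|^2_{H^1_0}$ for $\varepsilon\le\varepsilon_1$, with an explicit $\varepsilon_1$ as in $(2.19)$; your contradiction argument yields only injectivity plus a uniform resolvent bound with a non-explicit $\varepsilon_1$, which is enough for the theorem but weaker than what the paper extracts (and your G\aa rding inequality itself is essentially the paper's ``first way'' $(2.2)$--$(2.5)$, with $A\nabla\chi_w\in L^p$ playing the role of $\nabla\psi_3\in L^\infty$, so the Gagliardo--Nirenberg step you need is the analogue of $(2.14)$--$(2.16)$). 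For the identification of the limit, the paper deliberately avoids oscillating-test-function/div-curl machinery: it first proves the quantitative $H^1$ rate of Theorem 3.4 with Dirichlet correctors, then tests the equation against $\phi(1+\varepsilon\chi_w^\varepsilon)$ and substitutes the resulting expansions $(3.27)$--$(3.28)$ of $u_\varepsilon$ and $\nabla u_\varepsilon$ (the ``inverse two-scale expansion'' of Remark 3.5); you instead run Tartar's energy method with the corrected test function $\psi+\varepsilon\chi_k^\varepsilon\partial_k\psi+\varepsilon\chi_w^\varepsilon\psi$, where the computation of $\mathcal L_\varepsilon\psi_\varepsilon$, the exact cancellation of the $\varepsilon^{-1}W^\varepsilon\psi$ terms, and the identity $\mathcal M(W\chi_k)=\big(\fint_Y A\nabla_y\chi_w\big)_k$ (the paper's $(3.4)$) all check out. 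Your route is purely qualitative and more self-contained -- it needs no Dirichlet correctors, no $H^2$ regularity of $u_0$, and no restriction to $C^1$ or convex domains for the convergence step -- whereas the paper's route simultaneously produces the $O(\varepsilon)$ rates that are reused in Sections 4--6. One caveat you share with the paper: the assertion $u_0\in H^2(\Omega)$ for a general bounded Lipschitz domain does not follow from ``classical $H^2$ regularity for constant-coefficient operators'' (that requires convexity or $C^{1,1}$ boundary); on a bare Lipschitz domain one only gets $u_0\in H^2_{\mathrm{loc}}(\Omega)$, so that part of the statement should be read under the additional geometric hypotheses of Theorem 1.2.
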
This result is more or less known, and for completeness,
we provide a new proof of Theorem 1.1 in section 3 with some interesting observations after obtaining the $H^1$ convergence rates.
\begin{rmk}
Generally, we can't expect the term $\frac{1}{\varepsilon}W^\varepsilon u_\varepsilon$ is uniformly bounded in $L^2(\Omega)$ even in the case when
$a_{ij}=\delta_{ij}$. If so, then according to the equation $-\Delta u_\varepsilon +\frac{1}{\varepsilon}W^\varepsilon u_\varepsilon=f\in L^2(\Omega)$, we have
that $u_\varepsilon$ is uniformly bounded in $H^2(\Omega)$, for $1\geq\varepsilon>0$, which implies that $u_\varepsilon$ is compact in $H^1(\Omega)$. Then, according to
$\frac{1}{\varepsilon}W^\varepsilon\rightharpoonup 0 $ weakly in $H^{-1}(\Omega)$, we have
$\langle\frac{1}{\varepsilon}W^\varepsilon,u_\varepsilon\rangle\rightarrow 0$, which is often not the case. Actually, in view of $(3.6)$, there holds
$\frac{1}{\varepsilon}W^\varepsilon u_\varepsilon=\varepsilon\Delta_x \psi_{3}^\varepsilon u_\varepsilon=\nabla_x(\nabla_y\psi_{3}^\varepsilon
\cdot u_\varepsilon)-\nabla_y\psi_{3}^\varepsilon\cdot\nabla_x u_\varepsilon$, which states that $\frac{1}{\varepsilon}W^\varepsilon u_\varepsilon$ is uniformly bounded in $H^{-1}(\Omega)$, hence converges weakly in $H^{-1}(\Omega)$ as $\varepsilon\rightarrow 0$.
\end{rmk}

The next theorem states the $H^1$ convergence rates for $0<\varepsilon\leq \varepsilon_1$. We first introduce the following so-called Dirichlet correctors,
defined by
\begin{equation}\left\{\begin{aligned}
-\operatorname{div}(A(x/\varepsilon)\nabla \Phi_{\varepsilon,j}(x))&=0&\text{ \quad in }&\Omega,\\
\Phi_{\varepsilon,j}&=x_j&\text{ \quad on }&\partial \Omega,
\end{aligned}\right.\end{equation} for $1\leq j\leq d$.
\begin{thm}
Under the conditions in Theorem 1.1. We additionally assume that $\Omega$ is a bounded $C^1$ domain or bounded convex Lipschitz domain. For $\varepsilon_1\geq\varepsilon\geq 0$, with $\varepsilon_1$ given in Theorem 1.1, and
$f\in L^2(\Omega)$. Let $u_\varepsilon$ be the unique weak solution in $H^1_0(\Omega)$ to $\mathcal{L}_\varepsilon u_\varepsilon=f$ in $\Omega$. Then
\begin{equation}
\left\|u_\varepsilon-u_0-\left\{\Phi_{\varepsilon,j}(x)-x_j\right\}\partial_{j}u_0-\varepsilon\chi_w^\varepsilon u_0\right\|_{H^1_0(\Omega)}\leq C\varepsilon ||f||_{L^2(\Omega)},
\end{equation}
with $\chi_w^\varepsilon=\chi_w(x/\varepsilon)$, where $C$ depends only on $A$, $W$, $d$ and $\Omega$.
\end{thm}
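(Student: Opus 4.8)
The plan is to estimate the error by feeding the two-scale ansatz into the operator $\mathcal{L}_\varepsilon$ and controlling the resulting terms, exactly as in the classical proof of $O(\varepsilon)$ convergence rates for $\mathcal{L}_\varepsilon = -\operatorname{div}(A(x/\varepsilon)\nabla\cdot)$, but now tracking the extra contributions produced by the oscillating potential $\varepsilon^{-1}W^\varepsilon$. Set
\begin{equation}
w_\varepsilon := u_\varepsilon - u_0 - \{\Phi_{\varepsilon,j}(x) - x_j\}\partial_j u_0 - \varepsilon \chi_w^\varepsilon u_0,
\end{equation}
which lies in $H^1_0(\Omega)$ since $\Phi_{\varepsilon,j} - x_j = 0$ on $\partial\Omega$ and $u_0 = 0$ on $\partial\Omega$. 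The first step is to compute $\mathcal{L}_\varepsilon w_\varepsilon$ in the distributional sense. Using $\mathcal{L}_\varepsilon u_\varepsilon = f = \mathcal{L}_0 u_0 = -\operatorname{div}(\widehat{A}\nabla u_0) + \mathcal{M}(W\chi_w)u_0$, and $\mathcal{L}_\varepsilon(\Phi_{\varepsilon,j}) = 0$ (so that $-\operatorname{div}(A^\varepsilon\nabla(\Phi_{\varepsilon,j}\partial_j u_0)) = -\operatorname{div}(A^\varepsilon \Phi_{\varepsilon,j}\nabla\partial_j u_0) - (A^\varepsilon\nabla\Phi_{\varepsilon,j})\cdot\nabla\partial_j u_0$ after the product rule), together with the defining equation for $\chi_w$, namely $\operatorname{div}(A^\varepsilon\nabla\chi_w^\varepsilon) = \varepsilon^{-1}W^\varepsilon$, one sees that the potential term $\varepsilon^{-1}W^\varepsilon u_\varepsilon$ is precisely cancelled up to $-\operatorname{div}(A^\varepsilon\nabla(\varepsilon\chi_w^\varepsilon u_0))$ corrections plus the zeroth-order term $\mathcal{M}(W\chi_w)u_0$; this is the role of the first-order potential corrector $\varepsilon\chi_w^\varepsilon u_0$, and it is the one genuinely new computation here. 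Collecting terms, $\mathcal{L}_\varepsilon w_\varepsilon$ will be a sum of expressions of the schematic form $\operatorname{div}(B^\varepsilon(\cdot) \cdot \nabla^2 u_0)$, $\operatorname{div}(\varepsilon b^\varepsilon \cdot \nabla u_0)$, and $(\mathcal{M}(W\chi_w) - \text{something})u_0$ plus $\varepsilon(\cdots)$, where $B^\varepsilon$ involves the flux correctors of $A$ and of $W$, and the products $\chi_w^\varepsilon \nabla u_0$.

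The second step is to test the equation for $w_\varepsilon$ against $w_\varepsilon$ itself and use the Lax–Milgram/coercivity structure. Here I would invoke the hypothesis $\mathcal{M}(W\chi_w) > -\lambda'_{0,1}$ and the analysis of section 2–3 guaranteeing that, for $\varepsilon \le \varepsilon_1$, the bilinear form associated with $\mathcal{L}_\varepsilon$ is coercive on $H^1_0(\Omega)$ uniformly in $\varepsilon$ (this uses Remark 1.2's observation that $\varepsilon^{-1}W^\varepsilon u$ is really a divergence-form/$H^{-1}$ object, so the potential contributes a lower-order perturbation that can be absorbed). Thus $\|w_\varepsilon\|_{H^1_0(\Omega)}^2 \lesssim |\langle \mathcal{L}_\varepsilon w_\varepsilon, w_\varepsilon\rangle|$, and it remains to bound each piece of $\langle \mathcal{L}_\varepsilon w_\varepsilon, w_\varepsilon\rangle$ by $C\varepsilon\|f\|_{L^2(\Omega)}\|w_\varepsilon\|_{H^1_0(\Omega)}$. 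The divergence-form terms with a factor $\varepsilon$ and bounded periodic coefficients give $C\varepsilon\|\nabla u_0\|_{L^2}\|\nabla w_\varepsilon\|_{L^2}$ directly; the zeroth-order remainders involving $u_0$ with an $\varepsilon$ prefactor are handled by Poincaré and $H^2$ elliptic regularity $\|u_0\|_{H^2(\Omega)} \le C\|f\|_{L^2(\Omega)}$.

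The third and most delicate step is the term $\operatorname{div}\big(\{A^\varepsilon(\chi^\varepsilon + \text{flux corrector}) - \widehat{A}\}\nabla^2 u_0\big)$-type contribution — i.e. the part with no explicit $\varepsilon$ — which has to be handled by the flux-corrector (div-free) identity: writing the bracket as $\operatorname{div}_y$ of a skew-symmetric periodic matrix $E$ (and similarly incorporating the potential flux corrector coming from $\chi_w$), an integration by parts moves a derivative off $\nabla^2 u_0$, producing an $\varepsilon$ and an extra derivative $\nabla^3 u_0$ that is NOT available. The standard fix, which I would follow, is to not use the plain corrector $\chi^\varepsilon$ but rather to exploit that the \emph{Dirichlet} corrector $\Phi_{\varepsilon,j}$ already absorbs the boundary layer, and to estimate $\|\nabla\Phi_{\varepsilon,j} - \nabla x_j - \nabla\chi_j^\varepsilon\|$ in a suitable weighted/$L^2$ sense near $\partial\Omega$ using the $O(\varepsilon)$-in-$H^1$ Dirichlet estimate promised in the abstract; in the interior one uses a smoothing operator $S_\varepsilon$ (Steklov average) on $\nabla^2 u_0$ to trade the missing regularity for an $O(\varepsilon)$ loss, controlled again by $\|u_0\|_{H^2}$. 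The geometric hypothesis ($C^1$ or convex Lipschitz $\Omega$) enters exactly here, to guarantee the $O(\varepsilon)$ boundary estimate for the Dirichlet corrector and the $H^2$ bound. This boundary-layer/flux-corrector bookkeeping is the main obstacle; once it is in place, summing the estimates yields $\|w_\varepsilon\|_{H^1_0(\Omega)} \le C\varepsilon\|f\|_{L^2(\Omega)}$.
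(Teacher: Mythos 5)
Your overall architecture (the ansatz $w_\varepsilon$, computing $\mathcal{L}_\varepsilon w_\varepsilon$, testing against $w_\varepsilon$ and using uniform coercivity) matches the paper's proof of Theorem 3.4. But there are two substantive problems.

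First, the genuinely new difficulty introduced by the potential is not where you place it, and you do not actually resolve it. After the cancellations you describe, $\mathcal{L}_\varepsilon w_\varepsilon$ still contains the zeroth-order remainders
\[
\bigl(a_{ij}^\varepsilon\partial_{y_j}\chi_w^\varepsilon - W^\varepsilon\chi_i^\varepsilon\bigr)\partial_i u_0,
\qquad
\mathcal{M}(W\chi_w)\,u_0 - W^\varepsilon\chi_w^\varepsilon u_0,
\qquad
-\tfrac{1}{\varepsilon}W^\varepsilon\bigl\{\Phi_{\varepsilon,j}-x_j-\varepsilon\chi_j^\varepsilon\bigr\}\partial_j u_0 .
\]
The first two carry \emph{no} factor of $\varepsilon$ and the third carries a factor $\varepsilon^{-1}$; your claim that "the zeroth-order remainders involving $u_0$ with an $\varepsilon$ prefactor are handled by Poincar\'e" does not apply to them. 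The paper's mechanism is to verify the compatibility identity $\int_Y a_{ij}\partial_{y_j}\chi_w\,dy=\int_Y W\chi_i\,dy$ (equation (3.4)), so that $a_{ij}\partial_{y_j}\chi_w-W\chi_i$ and $W\chi_w-\mathcal{M}(W\chi_w)$ and $W$ are all mean-zero on $Y$, and then to introduce second-order auxiliary correctors $\psi_{1,i},\psi_2,\psi_3$ solving $\Delta_y\psi=\text{(mean-zero data)}$. Writing each remainder as $\varepsilon^2\Delta_x\psi^\varepsilon\,(\cdots)$ (respectively $\varepsilon\Delta_x\psi_3^\varepsilon\,\{\Phi_{\varepsilon,j}-x_j-\varepsilon\chi_j^\varepsilon\}\partial_j u_0$) and integrating by parts is what produces the missing factor of $\varepsilon$. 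Without this step (or an equivalent div-curl/oscillating-test-function device) the estimate cannot close, so this is a real gap.

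Second, your step 3 misdiagnoses the non-potential part and proposes an unnecessary detour. The term $-\partial_i\{b_{ij}^\varepsilon\partial_j u_0\}$ with $b_{ij}=\partial_{y_k}F_{kij}$, $F_{kij}=-F_{ikj}$, becomes $\varepsilon\,\partial_i\{F_{kij}^\varepsilon\partial^2_{kj}u_0\}$ by antisymmetry; tested against $w_\varepsilon\in H^1_0$ the derivative falls on $w_\varepsilon$, giving $C\varepsilon\|u_0\|_{H^2}\|\nabla w_\varepsilon\|_{L^2}$ with no $\nabla^3 u_0$ ever appearing. The terms involving $a_{ij}^\varepsilon\partial_j\{\Phi_{\varepsilon,k}-x_k-\varepsilon\chi_k^\varepsilon\}\,\partial^2_{ik}u_0$ are handled not by Steklov smoothing but by the weighted Caccioppoli-type inequality
\[
\int_\Omega\bigl|\nabla\{\Phi_{\varepsilon,j}-x_j-\varepsilon\chi_j^\varepsilon\}\bigr|^2|w_\varepsilon|^2\,dx\le C\varepsilon^2\int_\Omega|\nabla w_\varepsilon|^2\,dx,
\]
which follows from the facts that $h_\varepsilon=\Phi_{\varepsilon,j}-x_j-\varepsilon\chi_j^\varepsilon$ is $A^\varepsilon$-harmonic and $\|h_\varepsilon\|_{L^\infty}\le C\varepsilon$ (maximum principle). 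The $\varepsilon$-smoothing/cut-off machinery you invoke is what the paper uses only for general Lipschitz domains, where it yields the weaker rate $O(\sqrt\varepsilon)$; invoking it here would not give the stated $O(\varepsilon)$ bound.
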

After obtaining the $H^1$ convergence rates, we could obtain the following asymptotic behavior of spectra of $\mathcal{L}_\varepsilon$, with
$0<\varepsilon\leq \varepsilon_1$. Let $\{\lambda_{\varepsilon,k}\}$ denote the sequence of Dirichlet eigenvalues in an increasing order for
$\mathcal{L}_\varepsilon$ in a bounded Lipschitz domain $\Omega$. We also use $\{\lambda_{0,k}\}$ to denote the sequence of Dirichlet eigenvalues in an
increasing order for $\mathcal{L}_0$ in $\Omega$. Then, there hold the following eigenvalues convergence rates.
\begin{thm}
Suppose that $A$  satisfies conditions $(1.2)$, $(1.3)$ and $(1.4)$ as well as $\mathcal{M}(W\chi_w)>-\lambda_{0,1}'$. Let $\Omega$ be a bounded $C^1$ domain or bounded convex Lipschitz domain in  $\mathbb{R}^d$, $d\geq 2$. Then
\begin{equation}
|\lambda_{\varepsilon,k}-\lambda_{0,k}|\leq C \varepsilon (\lambda_{\varepsilon,k})^{3/2},
\end{equation}for $0< \varepsilon \leq \varepsilon_1$ with $\varepsilon_1$ given in Theorem 1.1, where $C$ is independent of $\varepsilon$ and $k$.
\end{thm}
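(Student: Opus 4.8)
The plan is to pass to the resolvent operators $T_\varepsilon:=\mathcal L_\varepsilon^{-1}$, $T_0:=\mathcal L_0^{-1}$ and run a min--max argument fed by the $H^1$ convergence rate of Theorem 1.2. Since $\mathcal M(W\chi_w)>-\lambda_{0,1}'$, the operator $\mathcal L_0$ is positive definite; using the identity $\varepsilon^{-1}W^\varepsilon=\operatorname{div}_x\big((a_{ij}\partial_j\chi_w)(x/\varepsilon)\big)$ (immediate from $(1.8)$), which shows $\varepsilon^{-1}W^\varepsilon$ is bounded in $H^{-1}(\Omega)$ uniformly in $\varepsilon$, together with $\lambda_{\varepsilon,1}\to\lambda_{0,1}>0$ from Theorem 1.1, one checks that the bilinear form $a_\varepsilon$ of $\mathcal L_\varepsilon$ is uniformly coercive and bounded on $H^1_0(\Omega)$ for $0<\varepsilon\le\varepsilon_1$ (possibly shrinking $\varepsilon_1$). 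Hence $T_\varepsilon,T_0$ are compact, positive, self-adjoint on $L^2(\Omega)$ with eigenvalues $\mu_{\varepsilon,k}=\lambda_{\varepsilon,k}^{-1}$, $\mu_{0,k}=\lambda_{0,k}^{-1}$; all forms are comparable to $\|\cdot\|_{H^1_0}^2$, so $T_\varepsilon,T_0$ are comparable to $(-\Delta)^{-1}$ and to each other uniformly in $\varepsilon$, and comparing Rayleigh quotients with the Dirichlet Laplacian's gives $\lambda_{\varepsilon,k}\simeq\lambda_{0,k}\simeq k^{2/d}$ uniformly in $\varepsilon$. In particular the conclusion is trivial once $\lambda_{\varepsilon,k}\ge c_*\varepsilon^{-2}$ for a suitable small $c_*$, so it is enough to establish $|\mu_{\varepsilon,k}-\mu_{0,k}|\le C\varepsilon\,\mu_{\varepsilon,k}^{1/2}$ in the range $\lambda_{\varepsilon,k}\le c_*\varepsilon^{-2}$, since then $|\lambda_{\varepsilon,k}-\lambda_{0,k}|=|\mu_{0,k}-\mu_{\varepsilon,k}|(\mu_{\varepsilon,k}\mu_{0,k})^{-1}\le C\varepsilon\,\mu_{\varepsilon,k}^{-3/2}=C\varepsilon\,\lambda_{\varepsilon,k}^{3/2}$, using $\mu_{0,k}\simeq\mu_{\varepsilon,k}$.

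The key estimate, valid for every $v\in L^2(\Omega)$, is
$$\big|\langle(T_\varepsilon-T_0)v,v\rangle\big|\ \le\ C\varepsilon\,\langle T_\varepsilon v,v\rangle^{1/2}\,\|v\|_{L^2(\Omega)}.\qquad(\star)$$
To see it, write $u_\varepsilon=T_\varepsilon v$, $u_0=T_0 v$ (so $\langle(T_\varepsilon-T_0)v,v\rangle=\langle u_\varepsilon-u_0,v\rangle$) and let $r_\varepsilon=u_\varepsilon-u_0-\{\Phi_{\varepsilon,j}-x_j\}\partial_j u_0-\varepsilon\chi_w^\varepsilon u_0\in H^1_0(\Omega)$, so $\|r_\varepsilon\|_{H^1_0}\le C\varepsilon\|v\|_{L^2}$ by Theorem 1.2. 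In $\langle u_\varepsilon-u_0,v\rangle=\langle r_\varepsilon,v\rangle+\langle\{\Phi_{\varepsilon,j}-x_j\}\partial_j u_0,v\rangle+\varepsilon\langle\chi_w^\varepsilon u_0,v\rangle$, the first term is rewritten, using $v=\mathcal L_\varepsilon u_\varepsilon$, self-adjointness and $r_\varepsilon,u_\varepsilon\in H^1_0$, as $\langle r_\varepsilon,v\rangle=a_\varepsilon(r_\varepsilon,u_\varepsilon)$, whence $|\langle r_\varepsilon,v\rangle|\le C\|r_\varepsilon\|_{H^1_0}\|u_\varepsilon\|_{H^1_0}\le C\varepsilon\|v\|_{L^2}\langle T_\varepsilon v,v\rangle^{1/2}$ since $\|u_\varepsilon\|_{H^1_0}^2\le C\,a_\varepsilon(u_\varepsilon,u_\varepsilon)=C\langle T_\varepsilon v,v\rangle$. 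The remaining two terms are $\le C\varepsilon\|\nabla u_0\|_{L^2}\|v\|_{L^2}$, using $\|\Phi_{\varepsilon,j}-x_j\|_{L^\infty(\Omega)}\le C\varepsilon$ — the function $\Phi_{\varepsilon,j}-x_j-\varepsilon\chi_j^\varepsilon$ solves $-\operatorname{div}(A(x/\varepsilon)\nabla(\cdot))=0$ in $\Omega$ with boundary data $-\varepsilon\chi_j^\varepsilon$, $\chi_j,\chi_w\in L^\infty$, so the maximum principle applies — and then $\|\nabla u_0\|_{L^2}^2\le C\,a_0(u_0,u_0)=C\langle T_0 v,v\rangle\le C\langle T_\varepsilon v,v\rangle$. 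Adding up yields $(\star)$; by $\langle T_\varepsilon v,v\rangle\simeq\langle T_0 v,v\rangle$, the same estimate holds with $\langle T_0 v,v\rangle^{1/2}$ on the right.

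Now fix $k$ with $\lambda_{\varepsilon,k}\le c_*\varepsilon^{-2}$, let $\{\phi_{\varepsilon,j}\}$ be the $L^2$-orthonormal Dirichlet eigenfunctions of $\mathcal L_\varepsilon$ and put $V=\operatorname{span}\{\phi_{\varepsilon,1},\dots,\phi_{\varepsilon,k}\}$. For $v\in V$ with $\|v\|_{L^2}=1$ we have $q:=\langle T_\varepsilon v,v\rangle=\sum_{j\le k}|(v,\phi_{\varepsilon,j})|^2\mu_{\varepsilon,j}\ge\mu_{\varepsilon,k}$, so $(\star)$ gives
$$\langle T_0 v,v\rangle=\langle T_\varepsilon v,v\rangle-\langle(T_\varepsilon-T_0)v,v\rangle\ \ge\ q-C\varepsilon q^{1/2}\ \ge\ \mu_{\varepsilon,k}-C\varepsilon\,\mu_{\varepsilon,k}^{1/2},$$
the last inequality because $t\mapsto t-C\varepsilon\sqrt t$ is nondecreasing on $t\ge(C\varepsilon/2)^2$ and $\mu_{\varepsilon,k}=\lambda_{\varepsilon,k}^{-1}\ge(C\varepsilon/2)^2$ once $c_*$ is small enough. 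Taking the minimum over $v\in V$ and the maximum over $k$-dimensional subspaces, the min--max principle gives $\mu_{0,k}\ge\mu_{\varepsilon,k}-C\varepsilon\mu_{\varepsilon,k}^{1/2}$; the symmetric argument (with $V$ spanned by the first $k$ eigenfunctions of $\mathcal L_0$ and $(\star)$ in its $\langle T_0 v,v\rangle^{1/2}$ form) gives $\mu_{\varepsilon,k}\ge\mu_{0,k}-C\varepsilon\mu_{0,k}^{1/2}$. These two force $\tfrac12\mu_{\varepsilon,k}\le\mu_{0,k}\le2\mu_{\varepsilon,k}$ and hence $|\mu_{\varepsilon,k}-\mu_{0,k}|\le C\varepsilon\mu_{\varepsilon,k}^{1/2}$, which by the first paragraph finishes the proof.

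The substantive step is $(\star)$; everything after it is bookkeeping with the min--max principle, and all constants come out independent of $k$ precisely because $(\star)$ involves only the single quadratic form $\langle T_\varepsilon v,v\rangle$ of $v$, never the individual eigenvalues. The points that demand care, rather than being genuine obstacles, are the uniform-in-$\varepsilon$ coercivity and boundedness of $a_\varepsilon$ on $H^1_0$ (which is where the divergence-form representation of $\varepsilon^{-1}W^\varepsilon$ and $\lambda_{\varepsilon,1}\to\lambda_{0,1}>0$ enter), the $L^\infty$ bound on the Dirichlet corrector $\Phi_{\varepsilon,j}-x_j$, and the trivial-regime cutoff supplied by the Weyl comparability $\lambda_{\varepsilon,k}\simeq\lambda_{0,k}$.
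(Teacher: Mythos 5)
Your proposal is correct and follows essentially the same route as the paper: both reduce the claim to $|\mu_{\varepsilon,k}-\mu_{0,k}|\le C\varepsilon\,\mu_{\varepsilon,k}^{1/2}$ for the resolvents via the min--max principle, with the key input being the corrector estimate of Theorem 1.2, the pairing of the remainder against $f$ in $H^1_0\times H^{-1}$ (equivalently, your identity $\langle r_\varepsilon,v\rangle=a_\varepsilon(r_\varepsilon,u_\varepsilon)$), and the bound $\|\Phi_{\varepsilon,j}-x_j\|_{L^\infty}\le C\varepsilon$. Your packaging --- a universal quadratic-form estimate $(\star)$ fed into Courant--Fischer with a monotonicity argument and an explicit cutoff for the regime $\varepsilon^2\lambda_{\varepsilon,k}\gtrsim 1$ --- is a mild and correct variant of the paper's Lemma 4.1, which instead works with test functions orthogonal to $V_{0,k-1}$ and $V_{\varepsilon,k-1}$.
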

Asymptotic behavior of spectra of $\mathcal{L}_\varepsilon$ is an important problem in periodic homogenization, and
there are a large of literatures concerning the convergence rates of eigenvalues of the operators
$\mathcal{L}_\varepsilon'=-\operatorname{div}(A(x/\varepsilon)\nabla \cdot)$, we refer readers to
\cite{Kesavan1979Homogenization1,Kesavan1979Homogenization2,santosa1993first,jikov2012homogenization,Moskow1997First,Castro2000High,Castro2000LOW,2012Convergence} for the results about the convergence rates of
eigenvalues and their references therein for more results. In particular, if we denote $\{\lambda_{\varepsilon,k}'\}$ the sequence of Dirichlet
eigenvalues in an increasing order for $\mathcal{L}_\varepsilon'$ and $\{\lambda_{0,k}'\}$ for $\mathcal{L}_0'=-\operatorname{div}(\widehat{A}\nabla
\cdot)$, respectively. Then the estimate
\begin{equation}|\lambda_{\varepsilon,k}'-\lambda_{0,k}'|\leq C \varepsilon (\lambda_{0,k}')^2\end{equation}
may be found in \cite{2012Convergence} for smooth domains, with the coefficient matrix $A$ satisfying $(1.2)$ and $(1.3)$, where $C$ is independent of $\varepsilon$ and $k$. In 2013, Carlos E. Kenig, F. Lin and Z. Shen \cite{kenig2013estimates} improve the
estimates by a factor of $(\lambda_{0,k}')^{1/2}$, which is achieved by utilizing the following $O(\varepsilon)$ estimate in $H^1_0(\Omega)$:
\begin{equation}
\left\|u_\varepsilon-u_0-\left\{\Phi_{\varepsilon,j}(x)-x_j\right\}\partial_{j}u_0\right\|_{H^1_0(\Omega)}\leq C\varepsilon ||u_0||_{H^2(\Omega)},
\end{equation}
under the assumption that $A$ satisfies $(1.2)$, $(1.3)$ and $\Omega$ is a convex bounded Lipschitz domain. The proof of Theorem 1.4 follows the ideas in
\cite{kenig2013estimates}, after obtaining the $O(\varepsilon)$ estimate in $H^1_0(\Omega)$, parallel to $(1.13)$. Note that the elliptic systems
in homogenization have also been studied in \cite{kenig2013estimates}, which we omit it here.

In the rest of the paper, we pay attention to the bounds of the conormal derivatives of Dirichlet eigenfunctions for $\mathcal{L}_\varepsilon$. There are a large of literatures concerning the bounds of the conormal derivatives of Dirichlet eigenfunctions for $\mathcal{L}_\varepsilon'$. We refer readers to
\cite{castro1999boundary,Castro2000High,Castro2000LOW,ozawa1993asymptotic,xu2012upper}
and their references therein for more results. We  especially point out that the following two theorem have been obtained in \cite{kenig2013estimates} for  $\mathcal{L}_\varepsilon'$, and we could extend these results to $\mathcal{L}_\varepsilon$ with rapidly oscillating periodic potentials.
\begin{thm}
Suppose that $A$ satisfies conditions $(1.2)$ and $(1.3)$. Also assume that $A$ is Lipschitz continuous. Let $\Omega$ be a bounded $C^{1,1}$ domain in $\mathbb{R}^d$, $d\geq 2$. Let $\varphi_\varepsilon\in H^1_0(\Omega)$ be a Dirichlet eigenfunction for $\mathcal{L}_\varepsilon$ in $\Omega$ with the associated eigenvalue $\lambda_\varepsilon$ and  $||\varphi_\varepsilon||_{L^2(\Omega)}=1$. Then

\begin{equation}\int_{\partial \Omega}\left|\nabla \varphi_{\varepsilon}\right|^{2} d \sigma \leq\left\{\begin{array}{ll}
C \lambda_\varepsilon\left(1+\varepsilon^{-1}\right) & \text {if } \varepsilon^{2} \lambda_\varepsilon \geq 1, \\
C \lambda_\varepsilon(1+\varepsilon \lambda_\varepsilon) & \text { if } \varepsilon^{2} \lambda_\varepsilon<1,
\end{array}\right.\end{equation}
 where $C$ depends only on $A$, $W$, $d$ and $\Omega$.
\end{thm}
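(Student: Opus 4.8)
The plan is to follow the Rellich–Payne–Weinberger–type identity approach combined with the $O(\varepsilon)$ estimate in $H^1_0(\Omega)$. First I would reduce the problem to the nonoscillating operator $\mathcal{L}_\varepsilon'=-\operatorname{div}(A(x/\varepsilon)\nabla\cdot)$: writing the eigenfunction equation as $\mathcal{L}_\varepsilon'\varphi_\varepsilon = \lambda_\varepsilon\varphi_\varepsilon - \tfrac1\varepsilon W^\varepsilon\varphi_\varepsilon =: F_\varepsilon$, I would use Remark 1.2 (or equation (3.6)) to write $\tfrac1\varepsilon W^\varepsilon\varphi_\varepsilon = \nabla_x(\nabla_y\psi_3^\varepsilon\,\varphi_\varepsilon) - \nabla_y\psi_3^\varepsilon\cdot\nabla_x\varphi_\varepsilon$, so that $F_\varepsilon$ is of the form $g + \operatorname{div}(h)$ with $\|g\|_{L^2(\Omega)}\le C\|\nabla\varphi_\varepsilon\|_{L^2(\Omega)}$ and $\|h\|_{L^2(\Omega)}\le C\|\varphi_\varepsilon\|_{L^2(\Omega)} = C$. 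Since $A$ is Lipschitz and $\Omega$ is $C^{1,1}$, elliptic regularity for $\mathcal{L}_\varepsilon'$ gives $\varphi_\varepsilon\in H^2(\Omega)$ with a bound $\|\varphi_\varepsilon\|_{H^2(\Omega)}\le C(1+\lambda_\varepsilon)\|\varphi_\varepsilon\|_{L^2(\Omega)}$ plus lower-order terms depending on $\varepsilon$ through the oscillation of $A$, and on $\partial\Omega$ the tangential derivatives of $\varphi_\varepsilon$ vanish so that $|\nabla\varphi_\varepsilon| = |\partial_\nu\varphi_\varepsilon|$ there.

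The core of the argument is the Rellich identity: choosing a smooth vector field $\eta\in C^1(\overline\Omega;\mathbb{R}^d)$ with $\eta=\nu$ on $\partial\Omega$, one multiplies the equation $\mathcal{L}_\varepsilon'\varphi_\varepsilon = F_\varepsilon$ by $\eta\cdot\nabla\varphi_\varepsilon$ and integrates by parts. The boundary term produces $\int_{\partial\Omega} (A^\varepsilon\nu\cdot\nu)|\partial_\nu\varphi_\varepsilon|^2\,d\sigma$, which is bounded below by $\kappa\int_{\partial\Omega}|\nabla\varphi_\varepsilon|^2\,d\sigma$ by ellipticity, while the interior terms are controlled by $\|\nabla\varphi_\varepsilon\|_{L^2(\Omega)}^2$, $\|\varphi_\varepsilon\|_{L^2(\Omega)}^2$, $\|F_\varepsilon\|_{H^{-1}}\|\nabla\varphi_\varepsilon\|_{L^2}$, and — crucially — a term involving $\nabla A^\varepsilon = \varepsilon^{-1}(\nabla A)(x/\varepsilon)$ paired with $|\nabla\varphi_\varepsilon|^2$, which contributes $C\varepsilon^{-1}\|\nabla\varphi_\varepsilon\|_{L^2(\Omega)}^2$. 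Using the $O(\varepsilon)$ estimate (1.13) adapted via Theorem 1.3 to $\mathcal{L}_\varepsilon$ — or rather its consequence that $\|\varphi_\varepsilon - \varphi_0 - \{\Phi_{\varepsilon,j}-x_j\}\partial_j\varphi_0 - \varepsilon\chi_w^\varepsilon\varphi_0\|_{H^1_0}\le C\varepsilon\lambda_\varepsilon$ where $\varphi_0$ solves $\mathcal{L}_0\varphi_0 = \lambda_\varepsilon\varphi_\varepsilon$ — together with the bound $\|\varphi_0\|_{H^2(\Omega)}\le C\lambda_\varepsilon$, I would extract $\|\nabla\varphi_\varepsilon\|_{L^2(\Omega)}\le C\lambda_\varepsilon^{1/2}$ and, more delicately, a bound on $\|\nabla\varphi_\varepsilon\|_{L^2(\Omega)}$ that is used inside the $\varepsilon^{-1}$ term. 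Collecting everything gives $\int_{\partial\Omega}|\nabla\varphi_\varepsilon|^2\,d\sigma \le C\lambda_\varepsilon + C\varepsilon^{-1}\|\nabla\varphi_\varepsilon\|_{L^2(\Omega)}^2 + C\|F_\varepsilon\|_{H^{-1}}\|\nabla\varphi_\varepsilon\|_{L^2}$, and since $\|\nabla\varphi_\varepsilon\|_{L^2}^2\le C\lambda_\varepsilon$ always holds from integrating the equation, the $\varepsilon^{-1}\lambda_\varepsilon$ term dominates when $\varepsilon^2\lambda_\varepsilon\ge 1$, giving $C\lambda_\varepsilon(1+\varepsilon^{-1})$; when $\varepsilon^2\lambda_\varepsilon<1$ one must instead use the corrector expansion to gain a factor of $\varepsilon$, replacing $\|\nabla\varphi_\varepsilon\|_{L^2(\Omega_\delta)}^2$ on a boundary layer $\Omega_\delta$ of width $\delta\sim\varepsilon$ by $C\varepsilon\lambda_\varepsilon^2$, which yields the refined bound $C\lambda_\varepsilon(1+\varepsilon\lambda_\varepsilon)$.

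More precisely, for the case $\varepsilon^2\lambda_\varepsilon<1$ the standard device (following Kenig–Lin–Shen) is to not apply the Rellich identity directly to $\varphi_\varepsilon$, but to the approximation $v_\varepsilon = \varphi_0 + \{\Phi_{\varepsilon,j}-x_j\}\partial_j\varphi_0 + \varepsilon\chi_w^\varepsilon\varphi_0$, whose normal derivative on $\partial\Omega$ one can compute explicitly in terms of the smooth homogenized eigenfunction $\varphi_0$ (since the Dirichlet correctors satisfy $\Phi_{\varepsilon,j}=x_j$ on $\partial\Omega$ and $\chi_w^\varepsilon$ is bounded), and then estimate $\int_{\partial\Omega}|\nabla(\varphi_\varepsilon - v_\varepsilon)|^2\,d\sigma$ by a trace/Rellich argument applied to the remainder $w_\varepsilon = \varphi_\varepsilon - v_\varepsilon$, which satisfies $\|w_\varepsilon\|_{H^1_0(\Omega)}\le C\varepsilon\lambda_\varepsilon$ but solves an equation with right-hand side of divergence form with $L^2$ data bounded appropriately. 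The normal derivatives of the correctors $\nabla\Phi_{\varepsilon,j}$ on $\partial\Omega$ satisfy $\int_{\partial\Omega}|\nabla\Phi_{\varepsilon,j}|^2\,d\sigma\le C\varepsilon^{-1}$ by known estimates (this is itself a Rellich-type bound for the corrector), so the $v_\varepsilon$ contribution is $\le C\varepsilon^{-1}\|\nabla\varphi_0\|_{L^\infty}^2 + C\lambda_\varepsilon \le C\varepsilon^{-1}\lambda_\varepsilon^2$, which combined with the $w_\varepsilon$ term produces $C\lambda_\varepsilon(1+\varepsilon\lambda_\varepsilon)$ after optimizing. I expect the main obstacle to be precisely this bookkeeping in the subcritical regime: tracking how the oscillating potential term $\varepsilon^{-1}W^\varepsilon\varphi_\varepsilon$ — which is only $H^{-1}$-bounded, not $L^2$-bounded — interacts with the second-order elliptic estimates and the Rellich multiplier, since $\eta\cdot\nabla\varphi_\varepsilon$ is only $H^1$, not smooth enough to pair with an $H^{-1}$ distribution without first integrating by parts to move a derivative off $W^\varepsilon$ using the corrector $\psi_3^\varepsilon$; the resulting commutator terms $\nabla_y\psi_3^\varepsilon\cdot\nabla\varphi_\varepsilon$ again produce $\|\nabla\varphi_\varepsilon\|_{L^2}^2$-type quantities that need to be absorbed or estimated on the boundary layer, and ensuring all constants remain independent of both $\varepsilon$ and $\lambda_\varepsilon$ requires care.
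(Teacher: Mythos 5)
Your overall skeleton --- a Rellich identity producing an $\varepsilon^{-1}$-weighted boundary-layer gradient term, handled crudely when $\varepsilon^2\lambda_\varepsilon\ge 1$ and via the $H^1$ corrector estimate when $\varepsilon^2\lambda_\varepsilon<1$, with the potential term $\varepsilon^{-1}W^\varepsilon\varphi_\varepsilon$ tamed by writing $W=\Delta_y\psi_3$ --- is the right one and matches the paper's Lemma 5.2 and Theorem 5.3. But your execution of the subcritical case has a genuine gap. You propose to apply the Rellich identity to the two-scale approximation $v_\varepsilon=\varphi_0+\{\Phi_{\varepsilon,j}-x_j\}\partial_j\varphi_0+\varepsilon\chi_w^\varepsilon\varphi_0$ and bound its boundary contribution by $C\varepsilon^{-1}\|\nabla\varphi_0\|_{L^\infty}^2+C\lambda_\varepsilon\le C\varepsilon^{-1}\lambda_\varepsilon^2$. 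That quantity is \emph{larger} than the target $C\lambda_\varepsilon(1+\varepsilon\lambda_\varepsilon)=C\lambda_\varepsilon+C\varepsilon\lambda_\varepsilon^2$ by a factor $\varepsilon^{-2}$ in the second term, and no ``optimizing'' recovers it; moreover the bound $\int_{\partial\Omega}|\nabla\Phi_{\varepsilon,j}|^2\,d\sigma\le C\varepsilon^{-1}$ is both too weak and unnecessary (under Lipschitz $A$ and $C^{1,1}$ boundary one has $\|\nabla\Phi_\varepsilon\|_\infty\le C$ by Avellaneda--Lin, which is what the paper actually uses), and $\|\nabla\varphi_0\|_{L^\infty}$ is not an object the available $L^2$-based estimates control. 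Separately, estimating $\int_{\partial\Omega}|\nabla w_\varepsilon|^2\,d\sigma$ for the remainder by ``a trace/Rellich argument'' is not routine: $\|w_\varepsilon\|_{H^1_0}\le C\varepsilon\lambda_\varepsilon$ gives no trace control of $\nabla w_\varepsilon$, and a Rellich identity for $w_\varepsilon$ would require pairing its right-hand side (which contains $\partial_i\{F^\varepsilon_{kij}\partial^2_{kj}\varphi_0\}$, i.e.\ essentially three derivatives of $\varphi_0$) with $h\cdot\nabla w_\varepsilon$, which is not justified under the stated regularity.

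What the paper does instead avoids putting the oscillating approximation into any boundary integral. Lemma 5.2, with the Rellich vector field $h$ supported in $\Omega_{c\varepsilon}$ and a rescaled interior $W^{2,2}$ estimate on the boundary layer to absorb the commutator terms from $\psi_3$, reduces everything to $\varepsilon^{-1}\int_{\Omega_\varepsilon}|\nabla\varphi_\varepsilon|^2$. For $\varepsilon^2\lambda_\varepsilon<1$ one introduces the \emph{homogenized} solution $v_\varepsilon$ of $\mathcal{L}_0 v_\varepsilon=\lambda_\varepsilon\varphi_\varepsilon$ (smooth: $\|\nabla^2 v_\varepsilon\|_{L^2}\le C\lambda_\varepsilon$), uses $(3.14)$ and $\|\nabla\Phi_\varepsilon\|_\infty\le C$ to replace $\varepsilon^{-1}\int_{\Omega_\varepsilon}|\nabla\varphi_\varepsilon|^2$ by $C\varepsilon\lambda_\varepsilon^2+\varepsilon^{-1}\int_{\Omega_\varepsilon}|\nabla v_\varepsilon|^2$, converts the latter by the fundamental theorem of calculus into $\int_{\partial\Omega}|\nabla v_\varepsilon|^2\,d\sigma+C\varepsilon\lambda_\varepsilon^2$, and finally bounds that boundary integral by a Rellich identity for the \emph{constant-coefficient} operator $\mathcal{L}_0$ combined with the duality estimate $\|\varphi_\varepsilon\|_{H^{-1}(\Omega)}\le C\lambda_\varepsilon^{-1/2}$ and one more application of $(3.14)$ to handle the term $\lambda_\varepsilon\int_\Omega \varphi_\varepsilon\,\partial_k v_\varepsilon\,h_k$. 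That duality step, which you do not mention, is what produces the clean $C\lambda_\varepsilon(1+\varepsilon\lambda_\varepsilon)$ without ever touching $\nabla\Phi_{\varepsilon,j}$ or $\nabla\chi_w^\varepsilon$ on $\partial\Omega$.
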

If $\varepsilon \lambda_\varepsilon$ is sufficiently small, we also obtain a sharp lower bound.
\begin{thm}
Let $\Omega$ be a bounded $C^2$ domain in $\mathbb{R}^d$, $d\geq 2$. Suppose that $A$ satisfies the conditions in Theorem 1.5. Let $\varphi_\varepsilon\in H^1_0(\Omega)$ be a Dirichlet eigenfunction for $\mathcal{L}_\varepsilon$ in $\Omega$ with the associated eigenvalue $\lambda_\varepsilon$ and  $||\varphi_\varepsilon||_{L^2(\Omega)}=1$. Then there exists $\delta>0$ such if $\lambda_\varepsilon>1$ and $\varepsilon \lambda_\varepsilon< \delta$, then
\begin{equation}
\int_{\partial\Omega}|\nabla\varphi_\varepsilon|^2 d\sigma \geq c\lambda_\varepsilon,
\end{equation}
 where $\delta>0$ and $c>0$ depend only on $A$, $W$, $d$ and $\Omega$.
\end{thm}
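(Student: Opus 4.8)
The plan is to transfer a Rellich--Pohozaev lower bound from the constant--coefficient operator $\mathcal{L}_0$ to $\mathcal{L}_\varepsilon$, the bridge being the $O(\varepsilon)$ estimate of Theorem 1.3. First, write $\lambda_\varepsilon=\lambda_{\varepsilon,k}$ and let $v_0\in H^2(\Omega)\cap H^1_0(\Omega)$ solve $\mathcal{L}_0 v_0=\lambda_\varepsilon\varphi_\varepsilon$, which is well posed since $\mathcal{M}(W\chi_w)>-\lambda_{0,1}'$; elliptic regularity on the $C^2$ domain gives $\|v_0\|_{H^2(\Omega)}\le C\lambda_\varepsilon$ and $\|v_0\|_{H^1(\Omega)}\le C\lambda_\varepsilon^{1/2}$. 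Applying Theorem 1.3 with $f=\lambda_\varepsilon\varphi_\varepsilon$ (so $u_\varepsilon=\varphi_\varepsilon$, $u_0=v_0$) and setting $\psi_\varepsilon:=\varphi_\varepsilon-v_0-\{\Phi_{\varepsilon,j}-x_j\}\partial_j v_0-\varepsilon\chi_w^\varepsilon v_0$, we obtain $\|\psi_\varepsilon\|_{H^1_0(\Omega)}\le C\varepsilon\lambda_\varepsilon$; combined with $\|\Phi_{\varepsilon,j}-x_j\|_{L^\infty(\Omega)}\le C\varepsilon$ this gives $\|\varphi_\varepsilon-v_0\|_{L^2(\Omega)}\le C\varepsilon\lambda_\varepsilon$, hence $\|v_0\|_{L^2(\Omega)}=1+O(\varepsilon\lambda_\varepsilon)$. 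By Theorem 1.4 and $\lambda_\varepsilon>1$, the hypothesis $\varepsilon\lambda_\varepsilon<\delta$ also forces $|\lambda_\varepsilon-\lambda_{0,k}|\le C\delta\lambda_\varepsilon$, so $\lambda_\varepsilon-\mathcal{M}(W\chi_w)\simeq\lambda_\varepsilon$ once $\delta$ is small.

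Next I apply the Rellich identity for the constant symmetric matrix $\widehat{A}$ to $v_0$ --- with $h(x)=x-x_0$ when $\Omega$ is star-shaped, and by a standard partition-of-unity localization for a general bounded $C^2$ domain --- using $-\operatorname{div}(\widehat{A}\nabla v_0)=\lambda_\varepsilon\varphi_\varepsilon-\mathcal{M}(W\chi_w)v_0$. The only term comparable to the main one is the source term $2\lambda_\varepsilon\int_\Omega(h\cdot\nabla v_0)\varphi_\varepsilon\,dx$; writing $\varphi_\varepsilon=v_0+(\varphi_\varepsilon-v_0)$, using $\int_\Omega(h\cdot\nabla v_0)v_0=-\tfrac12\int_\Omega(\operatorname{div} h)v_0^2$, and expanding $\varphi_\varepsilon-v_0=\psi_\varepsilon+\{\Phi_{\varepsilon,j}-x_j\}\partial_j v_0+\varepsilon\chi_w^\varepsilon v_0$, one controls each piece: for $\psi_\varepsilon$ integrate by parts to move the derivative off $v_0$, giving $\le C\|v_0\|_{L^2}\|\psi_\varepsilon\|_{H^1}\le C\varepsilon\lambda_\varepsilon$; for the Dirichlet corrector use $|\Phi_{\varepsilon,j}-x_j|\le C\varepsilon$, giving $\le C\varepsilon\|v_0\|_{H^1}^2\le C\varepsilon\lambda_\varepsilon$; the $\chi_w$ piece is $\le C\varepsilon\|v_0\|_{H^1}\|v_0\|_{L^2}$. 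The remaining terms of the identity combine exactly as they would for a genuine $\mathcal{L}_0$-eigenfunction, via $\int_\Omega\widehat{A}\nabla v_0\cdot\nabla v_0=(\lambda_\varepsilon-\mathcal{M}(W\chi_w))\|v_0\|_{L^2}^2+\lambda_\varepsilon\int_\Omega(\varphi_\varepsilon-v_0)v_0$, and one is left with
$$\int_{\partial\Omega}(h\cdot n)(\widehat{A}n\cdot n)|\partial_n v_0|^2\,d\sigma=2\big(\lambda_\varepsilon-\mathcal{M}(W\chi_w)\big)\|v_0\|_{L^2(\Omega)}^2+O(\varepsilon\lambda_\varepsilon^2),$$
whence, for $\delta$ small, $\int_{\partial\Omega}|\nabla v_0|^2\,d\sigma\ge c\lambda_\varepsilon$.

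To transfer this to $\varphi_\varepsilon$, note that $|\nabla\varphi_\varepsilon|^2=|\partial_n\varphi_\varepsilon|^2$ on $\partial\Omega$ and $\int_{\partial\Omega}|\nabla\varphi_\varepsilon|^2\asymp\int_{\partial\Omega}(A^\varepsilon n\cdot n)|\partial_n\varphi_\varepsilon|^2$. Since $v_0$, $\Phi_{\varepsilon,j}-x_j$, $\chi_w^\varepsilon v_0$ and $\psi_\varepsilon$ all vanish on $\partial\Omega$, only normal derivatives survive, and because the tangential gradient of $\Phi_{\varepsilon,j}-x_j$ vanishes there one gets $\partial_n\varphi_\varepsilon=m_\varepsilon\,\partial_n v_0+\partial_n\psi_\varepsilon$ on $\partial\Omega$ with $m_\varepsilon:=n_j\,\partial_n\Phi_{\varepsilon,j}+\varepsilon\chi_w^\varepsilon$, and moreover $(A^\varepsilon n\cdot n)\,m_\varepsilon=\sum_j n_j\,\partial\Phi_{\varepsilon,j}/\partial\nu_\varepsilon+O(\varepsilon)$, where $\partial/\partial\nu_\varepsilon$ is the conormal derivative. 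The remainder term is harmless: apply the Rellich identity for $\mathcal{L}_\varepsilon$ to $\psi_\varepsilon$ itself (not to $\varphi_\varepsilon$) --- then the a priori bad term $\varepsilon^{-1}\int_\Omega h_k(\partial_k a_{ij})(x/\varepsilon)\partial_i\psi_\varepsilon\,\partial_j\psi_\varepsilon\,dx$ is only $O(\varepsilon^{-1}\|\nabla\psi_\varepsilon\|^2)=O(\varepsilon\lambda_\varepsilon^2)$ precisely because $\psi_\varepsilon$ is $O(\varepsilon)$-small in $H^1_0$, and the term $2\int_\Omega(h\cdot\nabla\psi_\varepsilon)\mathcal{L}_\varepsilon\psi_\varepsilon\,dx$, where $\mathcal{L}_\varepsilon\psi_\varepsilon$ is the corrector-expansion residual and $\tfrac1\varepsilon W^\varepsilon=\operatorname{div}_x[(a_{ij}\partial_{y_j}\chi_w)(x/\varepsilon)\,e_i]$ as in Remark 1.2, yields $O(\varepsilon\lambda_\varepsilon^2)$ after integrating by parts. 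Thus $\int_{\partial\Omega}|\partial_n\psi_\varepsilon|^2\,d\sigma\le C\varepsilon\lambda_\varepsilon^2=C\delta\lambda_\varepsilon$.

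The remaining and, I expect, genuinely hard point is to show $\int_{\partial\Omega}(A^\varepsilon n\cdot n)|m_\varepsilon\,\partial_n v_0|^2\,d\sigma\ge c\lambda_\varepsilon$. The factor $m_\varepsilon$ oscillates at scale $\varepsilon$ and is not bounded below pointwise --- the normal derivative of $\sum_j\beta_j\Phi_{\varepsilon,j}$ changes sign even for constant $A$ --- so no pointwise comparison with the previous step is available, and one must argue by averaging. Since $\varepsilon\lambda_\varepsilon<\delta$ places $\varepsilon$ well below the scale $\lambda_\varepsilon^{-1}$ on which $\partial_n v_0$ varies (note $\|\partial_n v_0\|_{H^{1/2}(\partial\Omega)}\le C\lambda_\varepsilon$), the oscillating weight $(A^\varepsilon n\cdot n)|m_\varepsilon|^2$ can be averaged against $|\partial_n v_0|^2$ up to an error of size $C\delta\lambda_\varepsilon$, and one shows, using the cell problems for $\chi_j$ and the weak convergence $\partial\Phi_{\varepsilon,j}/\partial\nu_\varepsilon\rightharpoonup(\widehat{A}n)_j$, that this average is bounded below. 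Combining with the remainder bound then gives $\int_{\partial\Omega}|\nabla\varphi_\varepsilon|^2\,d\sigma\ge c\lambda_\varepsilon$. The technical difficulties I anticipate are exactly this quantitative boundary-layer averaging for $m_\varepsilon$ and the $H^2$-type regularity of $\psi_\varepsilon$ needed to legitimize the Rellich identity in the transfer step (which may force one to mollify the source $\lambda_\varepsilon\varphi_\varepsilon$ first) --- the rest being bookkeeping around the scheme above.
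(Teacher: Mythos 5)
Your first step --- introducing $v_0$ with $\mathcal{L}_0v_0=\lambda_\varepsilon\varphi_\varepsilon$ and running the constant-coefficient Rellich identity with $h(x)=x$ to get $\int_{\partial\Omega}|\nabla v_0|^2\,d\sigma\geq c\lambda_\varepsilon-C\varepsilon\lambda_\varepsilon^2$ --- is exactly the paper's estimate $(6.6)$, and that part is sound. The divergence, and the gap, is in how you pass from $v_0$ to $\varphi_\varepsilon$. You try to compare the two \emph{on the boundary itself}, writing $\partial_n\varphi_\varepsilon=m_\varepsilon\,\partial_n v_0+\partial_n\psi_\varepsilon$ with $m_\varepsilon=n_j\partial_n\Phi_{\varepsilon,j}+\varepsilon\chi_w^\varepsilon$, which forces you to prove a quantitative non-degeneracy statement for the oscillating weight $m_\varepsilon$: effectively $\fint_{\Delta(P,\varepsilon)}|n_j\partial_n\Phi_{\varepsilon,j}|^2\,d\sigma\geq c$ uniformly over boundary patches of size $\varepsilon$. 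You correctly flag this as ``the genuinely hard point,'' but the argument you sketch for it does not close: weak convergence of $\partial\Phi_{\varepsilon,j}/\partial\nu_\varepsilon$ to $(\widehat{A}n)_j$ is a statement about fixed test functions and gives no lower bound at scale $\varepsilon$, and the ``averaging against $|\partial_n v_0|^2$'' step needs both this missing non-degeneracy and control of $\nabla_{\tan}\partial_n v_0$ in $L^2(\partial\Omega)$, i.e.\ $H^3$-regularity of $v_0$, which is not available from $\lambda_\varepsilon\varphi_\varepsilon\in L^2$ alone. The auxiliary claim $\int_{\partial\Omega}|\partial_n\psi_\varepsilon|^2\,d\sigma\leq C\varepsilon\lambda_\varepsilon^2$ is also not justified as stated: applying the Rellich identity to $\psi_\varepsilon$ requires pairing $h\cdot\nabla\psi_\varepsilon$ with the residual $(3.8)$, whose terms $\varepsilon\partial_i\{F^\varepsilon_{kij}\partial^2_{kj}v_0\}$ and $\partial_i\{a^\varepsilon_{ij}(\Phi_{\varepsilon,k}-x_k)\partial^2_{jk}v_0\}$ produce, after integration by parts, quantities involving $\nabla^2\psi_\varepsilon$ and $\nabla^2\Phi_\varepsilon$ that are not $O(\varepsilon\lambda_\varepsilon)$-small near $\partial\Omega$.

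The paper circumvents precisely this obstruction by never comparing normal derivatives on $\partial\Omega$. Instead it works with the \emph{boundary-layer energy}: combining $(6.6)$ with the one-dimensional trace inequality $(6.4)$ gives $\varepsilon^{-1}\int_{\Omega_{c\varepsilon}}|\nabla v_\varepsilon|^2\geq c\lambda-C\varepsilon\lambda^2$, and the pointwise expansion $\nabla\varphi_\varepsilon\approx(\nabla\Phi_\varepsilon)\nabla v_\varepsilon$ in $L^2(\Omega_{c\varepsilon})$ is converted into a lower bound using Lemma 6.1, the invertibility of the full Jacobian matrix $\nabla\Phi_\varepsilon$ in the $\varepsilon$-collar (a bulk statement, much softer than non-degeneracy of the single scalar $n_j\partial_n\Phi_{\varepsilon,j}$ on $\partial\Omega$). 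This yields $\varepsilon^{-1}\int_{\Omega_{c\varepsilon}}|\nabla\varphi_\varepsilon|^2\geq c\lambda-C\varepsilon\lambda^2$ (Lemma 6.2). The passage from this bulk bound to the boundary trace of $\varphi_\varepsilon$ is then done by a separate mechanism you do not have: a rescaled compactness/unique-continuation lemma (Lemma 6.3, Remark 6.4) giving $\int_{I(\psi,\varepsilon)}|\nabla\varphi_\varepsilon|^2\,d\sigma\geq c\varepsilon^{-1}\int_{Z(\psi,2\varepsilon)}|\nabla\varphi_\varepsilon|^2\,dx$ under a doubling hypothesis, combined with a covering of $\partial\Omega$ by $\varepsilon$-patches and a good/bad dichotomy in which the bad patches are absorbed by choosing the doubling constant $N$ large. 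To complete your route you would need to supply, from scratch, a uniform lower bound on $\varepsilon$-scale averages of $|\partial\Phi_{\varepsilon}/\partial n|$ on $\partial\Omega$; as it stands, that is the entire difficulty of the theorem and it is missing.
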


For $0<\varepsilon\leq \varepsilon_1$, let $\{\varphi_{\varepsilon,k}\}$ be an orthonormal basis of $L^2(\Omega)$, where ${\varphi_{\varepsilon,k}}$ is a Dirichlet eigenfunction for $\mathcal{L}_\varepsilon$ in $\Omega$ with eigenvalue $\lambda_{\varepsilon,k}$. The spectral (cluster) projection operator $S_{\varepsilon,k}(f)$ is defined by
\begin{equation}
S_{\varepsilon,\lambda}(f)=\sum_{\sqrt{\lambda_{\varepsilon,k}}\in [\sqrt{\lambda},\sqrt{\lambda+1})}\varphi_{\varepsilon,k}(f),
\end{equation}
where $\lambda\geq 1$, $\varphi_{\varepsilon,k}(f)(x)=\langle\varphi_{\varepsilon,k},f\rangle\varphi_{\varepsilon,k}(x)$, and $\langle,\rangle$ denotes the inner product in
$L^2(\Omega)$. Let $u_\varepsilon=S_{\varepsilon,k}(f)$, where $f\in L^2(\Omega)$ and $||f||_{L^2(\Omega)}=1$. We will show in section 5 that

\begin{equation}\int_{\partial \Omega}\left|\nabla u_{\varepsilon}\right|^{2} d \sigma \leq\left\{\begin{array}{ll}
C \lambda\left(1+\varepsilon^{-1}\right) & \text {if } \varepsilon^{2} \lambda \geq 1, \\
C \lambda(1+\varepsilon \lambda) & \text { if } \varepsilon^{2} \lambda<1,
\end{array}\right.\end{equation}
where $C$ depends only on $A$, $W$, $d$ and $\Omega$. Theorem 1.5 follows if we choose $f$ to be an eigenfunction of
$\mathcal{L}_\varepsilon$. We point out that the estimate in $(1.17)$ for the case $\varepsilon^2 \lambda\geq 1$, as in the case of Laplacian
\cite{xu2012upper}, follows readily from the Rellich identities, the proof for the case $\varepsilon^2 \lambda< 1$ is more subtle. The basic idea is to
use the $H^1$ convergence estimate to approximate the eigenfunction $\varphi_{\varepsilon,k}$ with $\lambda_{\varepsilon,k}$ by the solution
$v_\varepsilon$ of the Dirichlet problem $\mathcal{L}_0(v_\varepsilon)=\lambda_\varepsilon \varphi_\varepsilon$ in $\Omega$ with $v_\varepsilon=0$ on
$\partial\Omega$. Note that the $H^2$ estimate of $\varphi_\varepsilon$ in  $\Omega_{c\varepsilon}=\{x\in \Omega :\text{dist}(x,\partial\Omega)<c\varepsilon\}$ will be needed in the proof of Theorem 1.5. The same approach, together with a compactness argument, also
leads to the lower bounds in Theorem 1.6, whose proof is given in section 6.
\begin{rmk}
Note that we always assume that $0<\varepsilon \leq \varepsilon_1$, since we may obtain the coercive estimates $\langle \mathcal{L}_{\varepsilon}(u),u\rangle\geq c
||u||_{H_0^1(\Omega)}^2$,  for any $u\in H_0^1(\Omega)$, under the assumptions that $0<\varepsilon \leq \varepsilon_1$, $A\in \text{VMO}(\mathbb{R}^d)$ and
$\mathcal{M}(W\chi_w)>-\lambda_{0,1}'$. Moreover, there is an another way to obtain the coercive estimates by considering
$\mathcal{L}_{\varepsilon,\mu}(u_\varepsilon)=-\operatorname{div}(A^\varepsilon \nabla u_\varepsilon)+\frac{1}{\varepsilon}W^\varepsilon u_\varepsilon+\mu u_\varepsilon$ for $\mu>0$ large enough, with $0<\varepsilon\leq 1$. See section 2 for more details.
\end{rmk}
Throughout this paper, we use the following notation

$$H^m_{\text{per}}(Y)=:\left\{f\in H^m(Y) \text{ and }f\text{ is 1-periodic with }\fint_Yfdy=0\right\}, $$
and we will write $\partial_{x_i}$ as $\partial_i$, $F^\varepsilon=F(x/\varepsilon)$ for a function $F$ and $\mathcal{M}(G)=\int_Y G(y)dy$ for a 1-periodic function $G$ if the context is understand.

\section{Estimates for the spectrum and homogenization}
In order to move forward, we first need to know in what condition, that the equation $(1.1)$, with $f\in L^2(\Omega)$, admits a unique solution
$u_\varepsilon\in H^1_0(\Omega)$. If we multiply $(1.1)$ by $u_\varepsilon$, we find
\begin{equation}\int_\Omega A^\varepsilon \nabla u_\varepsilon \nabla u_\varepsilon dx+\frac{1}{\varepsilon}\int_\Omega W^\varepsilon u^2_\varepsilon
dx=\langle f,u_\varepsilon\rangle,
\end{equation}
where $A^\varepsilon=A(x/\varepsilon)$ and$\langle,\rangle$ denotes the inner product in $L^2(\Omega)$.
 The expression $(2.1)$ does not give us any simple a prior estimate, and it is even unclear at this stage if we could solve for all $\varepsilon\rightarrow0$. Actually, there are two ways to investigate the existence of the solution to the equation $(1.1)$. The first way is to consider, for $\mu>0$,
\begin{equation}
\mathcal{L}_{\varepsilon,\mu}(u_\varepsilon)=-\operatorname{div}(A^\varepsilon \nabla u_\varepsilon)+\frac{1}{\varepsilon}W^\varepsilon u_\varepsilon+\mu u_\varepsilon.
\end{equation}
Since $W\in L^\infty(Y)$ is 1-periodic with $\int_Y W(y)dy=0$, then there exists
$\psi_{3}\in H^2_{\text{per}}(Y)\cap W^{2,p}(Y)$, with any $p\in(1,\infty)$, solving
\begin{equation}
\Delta_y \psi_{3}(y)=W(y) \text{ in Y, with }\int_Y \psi_{3}(y)dy=0.
\end{equation}
Thus, for any $u\in H^1_0(\Omega)$, we have
\begin{equation}\begin{aligned}
\frac{1}{\varepsilon}\int_\Omega W^\varepsilon u^2dx&=\varepsilon\int_\Omega \Delta_x \psi_{3}^\varepsilon u^2dx
=-2\int_\Omega\nabla_y \psi_{3}^\varepsilon \nabla_x u\cdot u\\
&\geq -\delta ||\nabla u||_{L^2(\Omega)}^2-C_\delta || u||_{L^2(\Omega)}^2,
\end{aligned}\end{equation}
for any $\delta\in (0,1)$ and $\psi_3^\varepsilon=\psi_3(x/\varepsilon)$. Consequently, if $\mu>0$ is suitably large, then
\begin{equation}
\langle\mathcal{L}_{\varepsilon,\mu}(u),u\rangle\geq C ||u||_{H_0^1(\Omega)}^2, \text{\quad for any }u\in H_0^1(\Omega),
\end{equation}
 which states that the equation $(1.1)$ admits a unique solution in $H_0^1(\Omega)$.

 The second way is to estimate the first eigenvalue
 $\lambda_{1,\varepsilon}$ for the
operator $\mathcal{L}_\varepsilon$, and eventually, we will obtain the coercive estimates for the operator $\mathcal{L}_\varepsilon$, under the assumption  that $\mathcal{M}(W\chi_w)>-\lambda_{0,1}'$, where $\lambda_{0,1}'$ is
the first eigenvalue of $\mathcal{L}_0'=-\operatorname{div}(\widehat{A}(\nabla\cdot))$ in $\Omega$ for the Dirichlet's boundary condition as well as $A$ satisfies the adtional smoothness condition $(1.4)$. However, in view of $(1.8)$, then integration by parts yields
\begin{equation}
\mathcal{M}(W\chi_w)=-\int_YA \nabla \chi_w \nabla\chi_wdy.
\end{equation}
If $\mathcal{M}(W\chi_w)\leq-\lambda_{0,1}'$, then we may consider the operator $\mathcal{L}_{\varepsilon,\mu}$ such that
$\mathcal{M}(W\chi_w)+\mu>-\lambda_{0,1}'$. For simplicity, we may assume $\mu=0$. To obtain the coercive estimates, we need to investigate the asymptotic behavior
for the first eigenvalue for the operator $\mathcal{L}_\varepsilon$ in $\Omega$ for the Dirichlet's boundary condition, which is stated in the following
theorem.

\begin{thm}Assume that the matrix $A(y)$ satisfies the conditions $(1.2)$, $(1.3)$ and $(1.4)$. And let $\lambda_{\varepsilon,1}$, $\lambda_{\varepsilon,1}'$ and $\lambda_{0,1}'$ be the first eigenvalue of the operator $\mathcal{L}_\varepsilon$, $\mathcal{L}_\varepsilon'=\mathcal{L}_\varepsilon-\frac{1}{\varepsilon}W^\varepsilon$ and
$\mathcal{L}_0'=\mathcal{L}_0-\mathcal{M}(W\chi_w)$ in $\Omega$ for the Dirichlet boundary condition, respectively. Then, there exists $\varepsilon_{0}\in (0,1)$, depending only on $A$, $W$, $d$ and  $\Omega$, such that there holds the following
convergence rate for the first eigenvalue,
\begin{equation}
\left|\lambda_{\varepsilon,1}-(\lambda_{\varepsilon,1}'+\mathcal{M}(W\chi_w))\right|\leq C \varepsilon\left(1+\lambda_{\varepsilon,1}'+|\mathcal{M}(W\chi_w)|\right)
\end{equation} and
\begin{equation}
\left|\lambda_{\varepsilon,1}-(\lambda_{0,1}'+\mathcal{M}(W\chi_w))\right|\leq C \varepsilon,
\end{equation} for $0<\varepsilon \leq \varepsilon_0$, where $C$ depends only on $A$, $W$, $d$ and $\Omega$.
\end{thm}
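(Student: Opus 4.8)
The plan is to obtain both estimates from the $H^1$ convergence rate, applied not to the homogenization of $\mathcal L_\varepsilon$ itself, but to the homogenization of $\mathcal L_\varepsilon'=-\operatorname{div}(A^\varepsilon\nabla\cdot)$, combined with a careful bookkeeping of the $\frac1\varepsilon W^\varepsilon$ term via the corrector $\chi_w$. First I would record the key algebraic identity for the potential: using $(1.8)$, $W=\operatorname{div}_y(A\nabla\chi_w)$, so for a test function $\varphi\in H^1_0(\Omega)$,
\begin{equation}
\frac1\varepsilon\int_\Omega W^\varepsilon u\,\varphi\,dx
=\int_\Omega (\operatorname{div}_y(A\nabla\chi_w))^\varepsilon\,\varepsilon^{-1}u\varphi\,dx
=-\int_\Omega (A\nabla\chi_w)^\varepsilon\cdot\nabla_x(u\varphi)\,dx,
\end{equation}
which shows $\frac1\varepsilon W^\varepsilon u$ is controlled in $H^{-1}$ and, more importantly, that on the level of the bilinear form the potential contributes, to leading order, the constant $\mathcal M(W\chi_w)=-\int_Y A\nabla\chi_w\cdot\nabla\chi_w\,dy$ plus an $O(\varepsilon)$ error. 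This is the mechanism by which $\lambda_{\varepsilon,1}$ splits as $\lambda_{\varepsilon,1}'+\mathcal M(W\chi_w)+O(\varepsilon)$.

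Next I would set up the variational/min-max comparison. Let $\varphi_\varepsilon$ be the first Dirichlet eigenfunction of $\mathcal L_\varepsilon$, normalized in $L^2$; then
\begin{equation}
\lambda_{\varepsilon,1}=\int_\Omega A^\varepsilon\nabla\varphi_\varepsilon\cdot\nabla\varphi_\varepsilon\,dx
+\frac1\varepsilon\int_\Omega W^\varepsilon\varphi_\varepsilon^2\,dx,
\end{equation}
and likewise $\lambda_{\varepsilon,1}'$ is the Rayleigh quotient of $\mathcal L_\varepsilon'$. To get the upper bound $\lambda_{\varepsilon,1}\le\lambda_{\varepsilon,1}'+\mathcal M(W\chi_w)+C\varepsilon(1+\cdots)$, I would plug into the Rayleigh quotient for $\mathcal L_\varepsilon$ the test function $\psi_\varepsilon=\varphi_\varepsilon'+\varepsilon\chi_w^\varepsilon\varphi_\varepsilon'$ (suitably renormalized in $L^2$), where $\varphi_\varepsilon'$ is the first eigenfunction of $\mathcal L_\varepsilon'$; the corrector term $\varepsilon\chi_w^\varepsilon\varphi_\varepsilon'$ is exactly what is needed to cancel the singular $\frac1\varepsilon W^\varepsilon$ contribution against $\mathcal M(W\chi_w)$, up to $O(\varepsilon)$ errors that one estimates using $\chi_w\in H^1_{\mathrm{per}}(Y)\cap L^\infty$ and standard interior $H^2$/boundary-layer bounds for $\varphi_\varepsilon'$ (which exist because $A\in VMO$ gives uniform $W^{1,p}$ and Lipschitz-type estimates away from $\partial\Omega$). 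For the reverse inequality I would run the symmetric argument using $\psi_\varepsilon=\varphi_\varepsilon-\varepsilon\chi_w^\varepsilon\varphi_\varepsilon$ as a trial function in the Rayleigh quotient of $\mathcal L_\varepsilon'$, keeping track that $\mathcal M(W\chi_w)=-\int_Y A\nabla\chi_w\cdot\nabla\chi_w$ enters with the correct sign. Together these give $(2.7)$, with $\varepsilon_0$ chosen so that the renormalization $\|\psi_\varepsilon\|_{L^2}=1+O(\varepsilon)$ is harmless and so that $\mathcal L_\varepsilon$ is invertible (coercivity from Remark 1.3 / the discussion following $(2.5)$).

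Then $(2.8)$ follows from $(2.7)$ by combining it with the known eigenvalue rate for the purely second-order operator, namely $|\lambda_{\varepsilon,1}'-\lambda_{0,1}'|\le C\varepsilon(\lambda_{0,1}')^2$ (the $k=1$ case of $(1.12)$, or directly the $O(\varepsilon)$ estimate $(1.13)$ applied to the ground state), since $\lambda_{0,1}'$ is a fixed constant depending only on $\widehat A$ and $\Omega$; the factor $(1+\lambda_{\varepsilon,1}'+|\mathcal M(W\chi_w)|)$ in $(2.7)$ is then $O(1)$, giving the clean $C\varepsilon$ in $(2.8)$. One should also note $\lambda_{\varepsilon,1}'\le\lambda_{0,1}'+C\varepsilon$ so there is no circularity in treating $\lambda_{\varepsilon,1}'$ as bounded. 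The main obstacle I anticipate is \emph{not} the homogenization algebra but the boundary layer: the corrector $\chi_w^\varepsilon$ does not vanish on $\partial\Omega$, so $\varphi_\varepsilon'+\varepsilon\chi_w^\varepsilon\varphi_\varepsilon'$ is not admissible as written and must be adjusted (e.g.\ by a Dirichlet-corrector-type subtraction or a cutoff supported in $\Omega_{c\varepsilon}$), and controlling the resulting boundary-layer error as $O(\varepsilon)$ in the Rayleigh quotient requires the $H^2$ estimate for eigenfunctions near $\partial\Omega$ together with a trace/Hardy inequality — this is the step where the regularity hypotheses $(1.4)$ on $A$ and the geometric hypotheses on $\Omega$ are genuinely used. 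A secondary technical point is ensuring all constants are independent of $\varepsilon$ uniformly down to $\varepsilon_0$, which is handled by the uniform coercivity in Remark 1.3.
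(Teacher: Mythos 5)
Your core strategy is the same as the paper's: conjugate by the multiplicative corrector $1+\varepsilon\chi_w^\varepsilon$, use the cell problem $(1.8)$ to trade $\tfrac1\varepsilon W^\varepsilon$ for $\mathcal M(W\chi_w)+O(\varepsilon)$ at the level of the quadratic form, compare Rayleigh quotients, and then deduce $(2.8)$ from $(2.7)$ via the known rate $|\lambda_{\varepsilon,1}'-\lambda_{0,1}'|\le C\varepsilon$. Two points need correction or completion. First, the ``main obstacle'' you anticipate is not there: the corrector enters \emph{multiplicatively}, so $v_\varepsilon=(1+\varepsilon\chi_w^\varepsilon)\phi_\varepsilon$ vanishes on $\partial\Omega$ whenever $\phi_\varepsilon$ does, and $v_\varepsilon\in H^1_0(\Omega)$ follows from $\chi_w\in L^\infty$, $\nabla_y\chi_w\in L^p(Y)$ and Sobolev embedding. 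This is precisely the feature the paper exploits: since $\phi\mapsto(1+\varepsilon\chi_w^\varepsilon)\phi$ is a bijection of $H^1_0(\Omega)$ with $\|v_\varepsilon\|_{L^2}=(1+O(\varepsilon))\|\phi_\varepsilon\|_{L^2}$, the two-sided bound $(2.17)$ on $\Pi_\varepsilon(v_\varepsilon)$ immediately yields $(2.7)$ from the min--max principle, with no Dirichlet-corrector subtraction, no cutoff, and no boundary $H^2$ or trace/Hardy estimates. The cutoff you propose would be not merely superfluous but harmful: discarding $\nabla_y\chi_w^\varepsilon\,\phi_\varepsilon$ on a boundary layer is not obviously an $O(\varepsilon)$ perturbation of the quadratic form.

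Second, your sketch glosses over the one step that genuinely requires an extra device. After the algebra of $(2.9)$--$(2.11)$ the singular term $\tfrac1\varepsilon W^\varepsilon$ is gone, but what remains is the oscillating product $\int_\Omega W^\varepsilon\chi_w^\varepsilon\phi_\varepsilon^2(1+\varepsilon\chi_w^\varepsilon)\,dx$, and extracting $\mathcal M(W\chi_w)\int_\Omega\phi_\varepsilon^2\,dx$ from it with an $O(\varepsilon)\|\phi_\varepsilon\|_{H^1}^2$ error is not a consequence of your displayed identity for $\tfrac1\varepsilon W^\varepsilon$. The paper introduces a second auxiliary potential $\psi_2\in H^2_{\mathrm{per}}(Y)$ with $\Delta_y\psi_2=W\chi_w-\mathcal M(W\chi_w)$ (equation $(2.12)$), writes $W^\varepsilon\chi_w^\varepsilon=\varepsilon^2\Delta_x\psi_2^\varepsilon+\mathcal M(W\chi_w)$, integrates by parts once, and controls the resulting term $\int_\Omega\phi_\varepsilon^2|\nabla_y\chi_w^\varepsilon|\,dx$ by $C(\|\phi_\varepsilon\|_{L^2}^2+\|\nabla\phi_\varepsilon\|_{L^2}^2)$ via Gagliardo--Nirenberg/Sobolev together with the uniform $L^p$ bound $(2.15)$ on $\nabla_y\chi_w$ coming from the $W^{1,p}$ theory under $(1.4)$; this is where the $VMO$ hypothesis is actually used, rather than in any boundary regularity for eigenfunctions. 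With these two adjustments your argument coincides with the paper's proof.
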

\begin{proof}We first note that $(2.8)$ directly follows from $(2.7)$, since $|\lambda_{0,1}'-\lambda_{\varepsilon ,1}'|\leq C \varepsilon$ \cite{kenig2013estimates}. Thus, we need only to show $(2.7)$.
Let $v_\varepsilon=(1+\varepsilon\chi_w^\varepsilon)\phi_\varepsilon$ with $\phi_\varepsilon\in H^1_0(\Omega)$, then direct computation shows that
\begin{equation}\begin{aligned}
\mathcal{L}_\varepsilon v_\varepsilon&=-\partial_{x_i}\left[\left(1+\varepsilon\chi_w^\varepsilon\right)a_{ij}^\varepsilon\partial_{x_j}\phi_\varepsilon\right]
-\partial_{x_i}\left[\phi_\varepsilon a_{ij}^\varepsilon\partial_{y_j}\chi_w^\varepsilon\right]+\frac{1}{\varepsilon}W^\varepsilon (1+\varepsilon\chi_w^\varepsilon)\phi_\varepsilon\\
&=-\partial_{x_i}\left[\left(1+\varepsilon\chi_w^\varepsilon\right)a_{ij}^\varepsilon\partial_{x_j}\phi_\varepsilon\right]
-a_{ij}^\varepsilon\partial_{y_j}\chi_w^\varepsilon\partial_{x_i}\phi_\varepsilon+W^\varepsilon\chi_w^\varepsilon\phi_\varepsilon,
\end{aligned}\end{equation} where we have used the equation $(1.8)$ in the second equality in $(2.9)$.

Denote \begin{equation}
\Pi_\varepsilon(v_\varepsilon)=:\left\langle \mathcal{L}_\varepsilon v_\varepsilon, v_\varepsilon\right\rangle.
\end{equation}
 Then according to $(2.9)$, we have
\begin{equation}\begin{aligned}
\Pi_\varepsilon(v_\varepsilon)&=\int_\Omega \left[\left(1+\varepsilon\chi_w^\varepsilon\right)a_{ij}^\varepsilon\partial_{x_j}\phi_\varepsilon\partial_{x_i}v_\varepsilon
-a_{ij}^\varepsilon\partial_{y_j}\chi_w^\varepsilon\partial_{x_i}\phi_\varepsilon v_\varepsilon +W^\varepsilon\chi_w^\varepsilon\phi_\varepsilon v_\varepsilon\right]dx\\
&=\int_\Omega \left(1+\varepsilon\chi_w^\varepsilon\right)^2a_{ij}^\varepsilon\partial_{x_j}\phi_\varepsilon\partial_{x_i}\phi_\varepsilon dx
+\int_\Omega W^\varepsilon\chi_w^\varepsilon\phi_\varepsilon^2\left(1+\varepsilon\chi_w^\varepsilon\right)dx,
\end{aligned}\end{equation} where we have used the symmetry of the coefficients $a_{ij}$ and $v_\varepsilon=(1+\varepsilon\chi_w^\varepsilon)\phi_\varepsilon$ in the above equality. Let $\psi_2=\psi_2(y)\in H^2_{\text{per}}(Y)\cap W^{2,p}(Y)$ with $1<p<\infty$, solve the equation
\begin{equation}
-\Delta_y\psi_2 -W\chi_w+\mathcal{M}(W\chi_w)=0 \text{ in Y with }\int_Y\psi_2(y)dy=0,
\end{equation}since $||\chi_w||_\infty\leq C$ due to the De Giorgi-Nash-Moser theorem.
 Then, there holds

\begin{equation}\begin{aligned}
&\int_\Omega W^\varepsilon\chi_w^\varepsilon\phi_\varepsilon^2\left(1+\varepsilon\chi_w^\varepsilon\right)dx\\
=&\int_\Omega \left[-\varepsilon^2\Delta_x \psi_2^\varepsilon +\mathcal{M}(W\chi_w)\right]\phi_\varepsilon^2\left(1+\varepsilon\chi_w^\varepsilon\right)dx\\
=&\mathcal{M}(W\chi_w)\int_\Omega \phi_\varepsilon^2\left(1+\varepsilon\chi_w^\varepsilon\right)dx+ \varepsilon \int_\Omega \partial_{y_i}\psi_2^\varepsilon\cdot\partial_{x_i}[\phi_\varepsilon^2\left(1+\varepsilon\chi_w^\varepsilon\right)]dx.
\end{aligned}\end{equation}
Simple computation shows that
\begin{equation}\begin{aligned}
\int_\Omega\phi_\varepsilon^2|\nabla_y \chi_w^\varepsilon|dx
&\leq\left\{\begin{aligned}
&||\phi_\varepsilon||_{L^4}^2||\nabla_y \chi_w^\varepsilon||_{L^2}, \text{ if }d=2,3,4\\
&||\phi_\varepsilon||_{L^\frac{2d}{d-2}}^2||\nabla_y \chi_w^\varepsilon||_{L^\frac{d}{2}}, \text{ if }d\geq 5
\end{aligned}\right.\\
&\leq C \left(||\phi_\varepsilon||_{L^2}^2+||\nabla \phi_\varepsilon||_{L^2}^2\right),
\end{aligned}\end{equation} where we have used the Galiardo-Nirenberg inequality for $d=2,3$ and the Sobolev inequality for $d\geq 4$, as well as the following inequality
\begin{equation}\begin{aligned}
\int_\Omega |\nabla_y \chi_w^\varepsilon|^{p}dx&=\varepsilon^d\int_{\tilde{\Omega}} |\nabla_y \chi_w(y)|^{p}dy\\
&\leq C \int_Y |\nabla_y \chi_w(y)|^{p}dy \leq C \int_{B_2(0)} |\nabla_y \chi_w(y)|^{p}dy\\
&\leq C\left(\int_{B_4(0)} |\nabla_y \chi_w(y)|^{2}dy\right)^{p/2}\\
&\leq C_p,
\end{aligned}\end{equation}
with $\tilde{\Omega}=\{y:y=\frac{x}{\varepsilon},x\in \Omega\}$, $2<p<\infty$, and $C_p$ depending only on $A$, $p$ and $\Omega$, where we have used the periodicity of $\chi_w$ in the second and the fourth inequality, and the $W^{1,p}$ estimates for $\chi_w$ under the assumption $A=(a_{ij})\in VMO(\mathbb{R}^d)$ \cite{Shen2004Bounds}. In view of $\nabla_y \psi_2\in L^\infty(\mathbb{R}^d)$ and $(2.14)$, then there holds

\begin{equation}\begin{aligned}
\varepsilon\left|\int_\Omega \partial_{y_i}\psi^\varepsilon
\partial_{x_i}[\phi_\varepsilon^2\left(1+\varepsilon\chi_w^\varepsilon\right)]dx\right|&\leq C \varepsilon\int_\Omega \left[|\phi_\varepsilon|\cdot|\nabla \phi_\varepsilon|+\phi_\varepsilon^2|\nabla_y \chi_w^\varepsilon|\right]dx\\
&\leq C \varepsilon\left(||\phi_\varepsilon||_{L^2(\Omega)}^2+||\nabla \phi_\varepsilon||_{L^2(\Omega)}^2\right),
\end{aligned}\end{equation} where we have used the $||\chi_w||_\infty \leq C$ .
Consequently, combining $(2.12)$-$(2.16)$ yields
\begin{equation}\begin{aligned}
&(1-C\varepsilon)\int_\Omega a_{ij}^\varepsilon\partial_{x_j}\phi_\varepsilon\partial_{x_i}\phi_\varepsilon dx+(1+C\varepsilon)\mathcal{M}(W\chi_w)\int_\Omega \phi_\varepsilon^2dx-C\varepsilon ||\phi_\varepsilon||_{L^2(\Omega)}^2
\leq \Pi(v_\varepsilon)\\
&\leq(1+C\varepsilon)\int_\Omega a_{ij}^\varepsilon\partial_{x_j}\phi_\varepsilon\partial_{x_i}\phi_\varepsilon dx+(1-C\varepsilon)\mathcal{M}(W\chi_w)\int_\Omega \phi_\varepsilon^2dx+C\varepsilon ||\phi_\varepsilon||_{L^2(\Omega)}^2,
\end{aligned}\end{equation}
 In view of $(1-C\varepsilon)||v_\varepsilon||_{L^2}^2\leq ||\phi_\varepsilon||_{L^2}^2\leq (1+C\varepsilon)||v_\varepsilon||_{L^2}^2$ for any $\phi_\varepsilon\in H^1_0(\Omega)$, then
\begin{equation}
\left|\lambda_{\varepsilon,1}-(\lambda_{\varepsilon,1}'+\mathcal{M}(W\chi_w))\right|\leq C \varepsilon\left(1+\lambda_{\varepsilon,1}'+|\mathcal{M}(W\chi_w)|\right)
\end{equation}after choosing $\varepsilon_0$ such that $1-C \varepsilon_0\geq1/4$ with $0<\varepsilon \leq \varepsilon_0$.
Thus, we complete this proof of $(2.7)$.
\end{proof}
Under the conditions in Theorem 2.1, if we additionally assume that $\lambda_{0,1}'+\mathcal{M}(W\chi_w)>0$, then we could choose $\varepsilon_1$ small such that

\begin{equation}\varepsilon_1\leq\text{ min}\left\{\varepsilon_0, \frac{\lambda_{0,1}'+\mathcal{M}(W\chi_w)}{2C}\right\},
\end{equation}
with the same constant $C$ in $(2.8)$,  then
\begin{equation}\lambda_{\varepsilon,1}\geq \frac{\lambda_{0,1}'+\mathcal{M}(W\chi_w)}{2C}>0\end{equation} for $0<\varepsilon\leq \varepsilon_1$, which implies that

\begin{equation}
\langle \mathcal{L}_{\varepsilon}(u),u\rangle\geq c ||u||_{H_0^1(\Omega)}^2, \text{\quad for any }u\in H_0^1(\Omega),
\end{equation}
for some constant $c>0$. Similarly, it is easy to see that, for any $u\in H_0^1(\Omega)$,
\begin{equation}\begin{aligned}
\langle\mathcal{L}_{0}(u),u\rangle&=\int_\Omega \widehat{a}_{ij}\partial_iu\partial_j udx+\mathcal{M}(W\chi_w)\int_\Omega u^2dx\\
&\geq \left(1+\frac{\mathcal{M}(W\chi_w)}{\lambda_{0,1}'}\right)\int_\Omega \widehat{a}_{ij}\partial_iu\partial_judx\\
&\geq c\int_\Omega \widehat{a}_{ij}\partial_iu\partial_judx,
\end{aligned}\end{equation}
where we have used $\mathcal{M}(W\chi_w)<0$ in the second inequality. Consequently, if $u_0\in H^2(\Omega)\cap H^1_0(\Omega)$ satisfies
$\mathcal{L}_0(u_0)=f\in L^2(\Omega)$, it is easy to see that
\begin{equation}
||\nabla u_0||_{L^2(\Omega)}\leq C||f||_{L^2(\Omega)}
\end{equation}
and \begin{equation}
||\nabla^2 u_0||_{L^2(\Omega)}\leq C||f||_{L^2(\Omega)}+C |\mathcal{M}(W\chi_w)|\cdot|| u_0||_{L^2(\Omega)}\leq C||f||_{L^2(\Omega)}.
\end{equation}
\section{Convergence rate in $H^1$}
Denote the so-called flux correctors $b_{ij}$ by
\begin{equation}
b_{ij}(y)=\widehat{a}_{ij}-a_{ij}(y)-a_{ik}(y)\frac{\partial\chi_j(y)}{ \partial{y_k}},
\end{equation}
where $1\leq i,j\leq d$.
\begin{lemma}
Suppose that $A$ satisfies conditions $(1.2)$ and $(1.3)$. For $1\leq i,j,k\leq d$, there exists $F_{ijk}\in H^1_{\text{per}}(Y)\cap L^\infty(Y)$ such that
\begin{equation}
b_{ij}=\frac{\partial}{ \partial_{y_k}}F_{kij}\ \text{ and }\ F_{kij}=-F_{ikj}.
\end{equation}
\begin{proof}See \cite[Remark 2.1]{kenig2014periodic}.\end{proof}
\end{lemma}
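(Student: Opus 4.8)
The plan is to construct the $F_{kij}$ from a periodic Poisson equation, exactly along the lines of \cite[Remark 2.1]{kenig2014periodic}. First I would record the two structural properties of the flux $b=(b_{ij})$ defined in $(3.1)$. Integrating $(3.1)$ over $Y$ and comparing with the formula $(1.6)$ for $\widehat a_{ij}$ gives $\fint_Y b_{ij}\,dy=0$. Differentiating $(3.1)$ in $y_i$ and summing over $i$, one has $\partial_{y_i}b_{ij}=-\partial_{y_i}\bigl(a_{ij}+a_{ik}\partial_{y_k}\chi_j\bigr)$, which vanishes by the cell equation $(1.7)$ for $\chi_j$; thus each column of $b$ is divergence free. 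Finally $b_{ij}\in L^2(Y)$, and in fact $b_{ij}\in L^p(Y)$ for every $p<\infty$, since $\nabla\chi_j\in L^p(Y)$ under the smoothness of $A$ (this $W^{1,p}$ bound for the cell solutions is the same one already invoked in $(2.16)$).

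Next, for each pair $(i,j)$ I would take $f_{ij}\in H^2_{\mathrm{per}}(Y)$ to be the unique $Y$-periodic solution with $\fint_Y f_{ij}\,dy=0$ of $\Delta_y f_{ij}=b_{ij}$ in $Y$; this is solvable precisely because $b_{ij}$ has mean zero, and by Calder\'on--Zygmund estimates $f_{ij}\in W^{2,p}(Y)$ for every $p<\infty$. Then I simply set
\[
F_{kij}:=\partial_{y_k}f_{ij}-\partial_{y_i}f_{kj}.
\]
The antisymmetry $F_{kij}=-F_{ikj}$ is immediate from this formula, and $F_{kij}$ is $Y$-periodic with $\fint_Y F_{kij}\,dy=0$ (an exact derivative of a periodic function integrates to zero), so $F_{kij}\in H^1_{\mathrm{per}}(Y)$. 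It remains to verify $\partial_{y_k}F_{kij}=b_{ij}$. Expanding, $\partial_{y_k}F_{kij}=\Delta_y f_{ij}-\partial_{y_i}\bigl(\partial_{y_k}f_{kj}\bigr)=b_{ij}-\partial_{y_i}\bigl(\partial_{y_k}f_{kj}\bigr)$, so the claim reduces to showing that $\partial_{y_k}f_{kj}$ is constant. But $\Delta_y\bigl(\partial_{y_k}f_{kj}\bigr)=\partial_{y_k}\bigl(\Delta_y f_{kj}\bigr)=\partial_{y_k}b_{kj}=0$ by the divergence-free property, and a $Y$-periodic harmonic function is constant; hence $\partial_{y_i}\bigl(\partial_{y_k}f_{kj}\bigr)=0$ and $\partial_{y_k}F_{kij}=b_{ij}$.

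For the $L^\infty$ bound I would argue that, since $b_{ij}\in L^p(Y)$ for all $p<\infty$, we get $f_{ij}\in W^{2,p}(Y)$ and hence $F_{kij}\in W^{1,p}(Y)$ for all $p<\infty$; taking $p>d$ and using the Sobolev embedding $W^{1,p}(Y)\hookrightarrow L^\infty(Y)$ (indeed into $C^{0,\alpha}(\overline Y)$) gives $F_{kij}\in L^\infty(Y)$. I expect the only genuinely non-algebraic point to be this last regularity step together with $\nabla\chi_j\in L^p$: for merely bounded measurable coefficients the same construction still produces $F_{kij}\in H^1_{\mathrm{per}}(Y)$ (with at most a Meyers-type improvement), and the uniform bound is what makes essential use of the regularity of $A$. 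Everything else is a routine consequence of the mean-zero and divergence-free structure of $b_{ij}$.
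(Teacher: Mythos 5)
Your construction is correct and is precisely the standard argument behind the citation the paper gives (Remark 2.1 of Kenig--Lin--Shen): the paper offers no proof of its own beyond that reference, and your steps --- solving $\Delta_y f_{ij}=b_{ij}$ on the torus, skew-symmetrizing via $F_{kij}=\partial_{y_k}f_{ij}-\partial_{y_i}f_{kj}$, and exploiting the mean-zero and divergence-free structure of $b_{ij}$ together with the periodic Liouville property for $\partial_{y_k}f_{kj}$ --- are exactly what that remark contains. Your closing caveat is also well placed: the $L^\infty$ bound via $b_{ij}\in L^p$ and $W^{1,p}\hookrightarrow L^\infty$ for $p>d$ rests on $\nabla\chi_j\in L^p(Y)$, which uses the VMO hypothesis $(1.4)$ (a standing assumption of the paper, invoked for the same $W^{1,p}$ estimate at $(2.15)$--$(2.16)$) rather than only the conditions $(1.2)$--$(1.3)$ literally listed in the lemma.
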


\begin{lemma}
Suppose that $A$ satisfies conditions $(1.2)$ and $(1.3)$. Let $\Omega$ be a bounded Lipschitz domain. Then
\begin{equation}||\Phi_{\varepsilon,j}(x)-x_j||_{L^\infty(\Omega)}\leq C \varepsilon,\end{equation}
where $C$ depends only on $A$.
\end{lemma}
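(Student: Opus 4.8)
The plan is to compare the Dirichlet corrector $\Phi_{\varepsilon,j}$ with the interior first‑order corrector $x_j+\varepsilon\chi_j(x/\varepsilon)$ and then invoke the maximum principle for the scalar operator $-\operatorname{div}(A^\varepsilon\nabla\,\cdot\,)$. As a preliminary I would record that $\chi_j\in L^\infty(Y)$ with $\|\chi_j\|_{L^\infty(Y)}\le C$: since $\chi_j$ is the $Y$‑periodic, mean‑zero weak solution of $-\operatorname{div}(A\nabla\chi_j)=\operatorname{div}(Ae_j)$, with $A$ bounded measurable and $Ae_j\in L^\infty$, the De Giorgi--Nash--Moser theorem yields the bound; this uses only $(1.2)$--$(1.3)$, exactly as for $\chi_w$ in Section~2, and needs no smoothness of $A$.

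Next, set $w_{\varepsilon,j}(x):=\Phi_{\varepsilon,j}(x)-x_j-\varepsilon\chi_j(x/\varepsilon)$. The key algebraic point is that $x_j+\varepsilon\chi_j(x/\varepsilon)$ is itself a solution of $-\operatorname{div}(A^\varepsilon\nabla\,\cdot\,)=0$: writing $N_j(y)=y_j+\chi_j(y)$, the cell problem $(1.7)$ reads $-\operatorname{div}_y\big(A(y)\nabla_y N_j\big)=0$, and since $x_j+\varepsilon\chi_j(x/\varepsilon)=\varepsilon N_j(x/\varepsilon)$, the chain rule gives $-\operatorname{div}_x\big(A^\varepsilon\nabla_x(\varepsilon N_j(x/\varepsilon))\big)=\tfrac1\varepsilon\big[-\operatorname{div}_y(A\nabla_y N_j)\big](x/\varepsilon)=0$ in $\Omega$. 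Combined with $-\operatorname{div}(A^\varepsilon\nabla\Phi_{\varepsilon,j})=0$, this shows $w_{\varepsilon,j}$ is a weak $H^1$ solution of $-\operatorname{div}(A^\varepsilon\nabla w_{\varepsilon,j})=0$ in $\Omega$. Moreover $w_{\varepsilon,j}+\varepsilon\chi_j(x/\varepsilon)=\Phi_{\varepsilon,j}-x_j\in H^1_0(\Omega)$ by the definition $(1.9)$, so $w_{\varepsilon,j}$ has boundary data $-\varepsilon\chi_j(x/\varepsilon)$, which satisfies $|-\varepsilon\chi_j(x/\varepsilon)|\le C\varepsilon$.

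To finish I would run the weak (energy) maximum principle. Put $M:=\varepsilon\|\chi_j\|_{L^\infty(Y)}$. Since $\Phi_{\varepsilon,j}-x_j\in H^1_0(\Omega)$ and $\varepsilon\chi_j(x/\varepsilon)+M\ge0$ with $\varepsilon\chi_j(x/\varepsilon)+M\in H^1(\Omega)\cap L^\infty(\Omega)$, the standard Stampacchia truncation lemma gives $(w_{\varepsilon,j}-M)^+\in H^1_0(\Omega)$; testing the equation against it and using ellipticity $(1.2)$ forces $\nabla(w_{\varepsilon,j}-M)^+=0$, hence $(w_{\varepsilon,j}-M)^+\equiv0$, i.e. $w_{\varepsilon,j}\le M$ a.e. The symmetric argument with $(w_{\varepsilon,j}+M)^-$ gives $w_{\varepsilon,j}\ge-M$ a.e., so $\|w_{\varepsilon,j}\|_{L^\infty(\Omega)}\le M\le C\varepsilon$. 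The triangle inequality together with $\|\varepsilon\chi_j(x/\varepsilon)\|_{L^\infty(\Omega)}\le C\varepsilon$ then yields $\|\Phi_{\varepsilon,j}-x_j\|_{L^\infty(\Omega)}\le C\varepsilon$. I do not expect any genuine obstacle: the argument is entirely classical, and the only items needing care are the $L^\infty$ bound on $\chi_j$ and the $H^1_0$‑membership of the truncations $(w_{\varepsilon,j}\mp M)^\pm$. Note that scalarity of the operator is essential for the maximum‑principle step, which is why no boundary regularity beyond "Lipschitz" is required.
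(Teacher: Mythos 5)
Your proposal is correct and follows essentially the same route as the paper's proof of Lemma 3.2: both compare $\Phi_{\varepsilon,j}$ with $x_j+\varepsilon\chi_j(x/\varepsilon)$, observe that the difference is $A^\varepsilon$-harmonic with boundary data $-\varepsilon\chi_j(x/\varepsilon)$, and conclude by the maximum principle together with $\|\chi_j\|_{L^\infty}\leq C$. Your write-up merely spells out the weak maximum principle via Stampacchia truncation, which the paper leaves implicit.
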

\begin{proof}
One may find this proof in \cite[Proposition 2.4]{kenig2014periodic}, and we provide it for completeness. Consider $u_\varepsilon(x)=\Phi_{\varepsilon,j}(x)-x_j-\varepsilon\chi_j(x/\varepsilon)$, then $\mathcal{L}_\varepsilon'(u_\varepsilon)=0$ in $\Omega$ and $u_\varepsilon(x)=-\varepsilon\chi_j(x/\varepsilon)$ on $\partial\Omega$. Then one may use the maximum principle and boundedness of $\chi$ to show that $||u_\varepsilon||_{L^\infty(\Omega)}\leq ||u_\varepsilon||_{L^\infty(\partial\Omega)}\leq C\varepsilon$. This implies that $||\Phi_{\varepsilon,j}(x)-x_j-\varepsilon\chi_j(x/\varepsilon)||_{L^\infty(\Omega)} \leq C\varepsilon$.
\end{proof}

In order to move forward, we give some notations first.
It is easy to see that  integration by parts yields that
\begin{equation}\begin{aligned}
\int_Y a_{ij}\partial_{y_j}\chi_w&=-\int_Y \partial_{y_j}a_{ij}\chi_w=-\int_Y \partial_{y_j}a_{ji}\chi_w\\
&=\int_Y\partial_{y_j}(a_{jk}\partial_{y_k}\chi_i)\chi_w=-\int_Ya_{jk}\partial_{y_k}\chi_i\partial_{y_j}\chi_w\\
&=-\int_Y\partial_{y_k}\chi_ia_{kj}\partial_{y_j}\chi_w=\int_Y\chi_i\partial_{y_k}\left(a_{kj}\partial_{y_j}\chi_w\right)\\
&=\int_Y \chi_iW,
\end{aligned}\end{equation}
where we have used the symmetry of $A$, $(1.7)$ and $(1.8)$.
Then there exist $\psi_{1,i}\in H^2_{\text{per}}(Y)\cap W^{2,p}(Y)$, with any $p\in(1,\infty)$ and $i=1,\cdots,d$, solving the following equation
\begin{equation}
\Delta_y \psi_{1,i}(y)=a_{ij}\partial_{y_j}\chi_w-W\chi_i \text{ in Y with } \int_Y\psi_{1,i}(y)dy=0,
\end{equation} since $\nabla_y\chi_w\in L^p(Y)$ which is due to the $W^{1,p}$ estimates under the assumption $A\in VMO(\mathbb{R}^d)$ \cite{Shen2004Bounds}, and $\chi_i\in L^\infty(Y)$ due to the De Giorgi-Nash-Moser theorem, where we have used the symmetry of $a_{ij}$, $(1.7)$ and $(1.8)$.
Similarly, there exist $\psi_{3}\in H^2_{\text{per}}(Y)\cap W^{2,p}(Y)$, with any $p\in(1,\infty)$, solving
\begin{equation}
\Delta_y \psi_{3}(y)=W \text{ in Y with } \int_Y\psi_{3}(y)dy=0,
\end{equation}since $W\in L^\infty(Y)$ with $\int_Y W(y)dy=0$.

\begin{lemma}
Suppose that $u_\varepsilon\in H^1_0(\Omega)$, $u_0\in H^2(\Omega)\cap H^1_0(\Omega)$, and $\mathcal{L}_\varepsilon=\mathcal{L}_0$ in $\Omega$. Let
\begin{equation}
w_\varepsilon= u_\varepsilon-u_0-\left\{\Phi_{\varepsilon,j}(x)-x_j\right\}\partial_{j}u_0-\varepsilon\chi_w^\varepsilon u_0.
\end{equation}Then
\begin{equation}\begin{aligned}
\mathcal{L}_\varepsilon w_\varepsilon
&=\varepsilon\partial_{i}\left\{F_{kij}^\varepsilon\partial^2_{kj}u_0\right\}
+a_{ij}^\varepsilon\partial_{j}\left\{\Phi_{\varepsilon,k}(x)-x_k-\varepsilon\chi_k^\varepsilon\right\}\partial^2_{ik}u_0
+\partial_{x_i}\left(a_{ij}^\varepsilon\varepsilon\chi_w^\varepsilon\partial_{j} u_0\right)\\
&\quad+\partial_{i}\left\{a_{ij}^\varepsilon \left\{\Phi_{\varepsilon,k}(x)-x_k\right\}\partial^2_{jk}u_0\right\}
-\varepsilon\Delta_x \psi_3^\varepsilon\left\{\Phi_{\varepsilon,j}(x)-x_j-\varepsilon\chi_j^\varepsilon\right\}\partial_{j}u_0\\
&\quad+\varepsilon^2 \Delta_x \psi_{1,i}^\varepsilon \partial_{x_i}u_0
-\varepsilon^2 \Delta_x \psi_{2}^\varepsilon u_0,
\end{aligned}\end{equation}
with $\psi_1$, $\psi_2$ and $\psi_3$ defined in $(3.5)$, $(2.12)$ and $(3.6)$, respectively.
\end{lemma}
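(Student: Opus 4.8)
The plan is to compute $\mathcal{L}_\varepsilon w_\varepsilon$ directly by applying $\mathcal{L}_\varepsilon$ to each of the four terms in the definition $(3.7)$ of $w_\varepsilon$ and then collecting and simplifying. First I would write $\mathcal{L}_\varepsilon u_\varepsilon = f = \mathcal{L}_0 u_0 = -\operatorname{div}(\widehat A\nabla u_0)+\mathcal{M}(W\chi_w)u_0$, so that the contribution of $u_\varepsilon$ cancels against $f$ up to the explicit $\mathcal{L}_0 u_0$ expression. Next I would expand $\mathcal{L}_\varepsilon\big(\{\Phi_{\varepsilon,j}-x_j\}\partial_j u_0\big)$: since $-\operatorname{div}(A^\varepsilon\nabla\Phi_{\varepsilon,j})=0$ by $(1.9)$, the Leibniz rule produces a "good" term $\partial_i\{a_{ij}^\varepsilon(\Phi_{\varepsilon,k}-x_k)\partial^2_{jk}u_0\}$ together with a term involving $a_{ij}^\varepsilon\partial_j\Phi_{\varepsilon,k}\,\partial^2_{ik}u_0$ and the zeroth–order contribution $\tfrac{1}{\varepsilon}W^\varepsilon(\Phi_{\varepsilon,j}-x_j)\partial_j u_0$. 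The $a_{ij}^\varepsilon\partial_j\Phi_{\varepsilon,k}$ piece should be rewritten by adding and subtracting $\varepsilon\chi_k^\varepsilon$ inside the derivative: one part combines with the $-\operatorname{div}(\widehat A\nabla u_0)$ coming from $f$ and with $-\operatorname{div}(a_{ij}^\varepsilon\varepsilon\chi_w^\varepsilon\partial_j u_0)$-type pieces to form the flux–corrector divergence $\varepsilon\partial_i\{F_{kij}^\varepsilon\partial^2_{kj}u_0\}$ via Lemma 3.1 (using $b_{ij}=\partial_{y_k}F_{kij}$, $F_{kij}=-F_{ikj}$, and the standard antisymmetry trick $\partial_i(F_{kij}^\varepsilon \partial^2_{jk}u_0)$ to move the $\varepsilon$-derivative), and the remainder is exactly the term $a_{ij}^\varepsilon\partial_j\{\Phi_{\varepsilon,k}-x_k-\varepsilon\chi_k^\varepsilon\}\partial^2_{ik}u_0$.

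The treatment of the oscillating–potential corrector $\varepsilon\chi_w^\varepsilon u_0$ is where the $\psi_1,\psi_2,\psi_3$ functions enter, and this is the step I expect to be the main obstacle, because several $\tfrac{1}{\varepsilon}W^\varepsilon$ terms have to be matched against second–order correctors. Applying $\mathcal{L}_\varepsilon$ to $\varepsilon\chi_w^\varepsilon u_0$ gives, by $(1.8)$, a term $-a_{ij}^\varepsilon\partial_{y_j}\chi_w^\varepsilon\partial_i u_0 + W^\varepsilon\chi_w^\varepsilon u_0$ plus the divergence term $\partial_i(a_{ij}^\varepsilon\varepsilon\chi_w^\varepsilon\partial_j u_0)$ (which is listed in $(3.8)$) plus a $\tfrac{1}{\varepsilon}W^\varepsilon\cdot\varepsilon\chi_w^\varepsilon u_0$ contribution. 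The cross term $-a_{ij}^\varepsilon\partial_{y_j}\chi_w^\varepsilon\partial_i u_0$ together with the stray $-W^\varepsilon\chi_i^\varepsilon\partial_i u_0$ coming out of the previous paragraph should be rewritten using $\psi_{1,i}$: by $(3.5)$, $\Delta_y\psi_{1,i}=a_{ij}\partial_{y_j}\chi_w-W\chi_i$, so $-(a_{ij}^\varepsilon\partial_{y_j}\chi_w^\varepsilon - W^\varepsilon\chi_i^\varepsilon)\partial_i u_0 = -\varepsilon^2\Delta_x\psi_{1,i}^\varepsilon\partial_i u_0$ up to a divergence-form term that I expect to combine with the $\varepsilon\partial_i\{F_{kij}^\varepsilon\cdots\}$ bookkeeping — actually with the sign as written in $(3.8)$ it is $+\varepsilon^2\Delta_x\psi_{1,i}^\varepsilon\partial_i u_0$, so I would be careful to track the sign convention, presumably $a_{ij}^\varepsilon\partial_{y_j}\chi_w^\varepsilon\partial_i u_0 = \varepsilon\partial_i(\varepsilon\psi_{1,i}^\varepsilon\text{-type}) - \varepsilon^2\Delta_x\psi_{1,i}^\varepsilon u_0$-type after integrating the product rule, the key point being that $\varepsilon^{-1}\partial_{y_j}(\cdot)^\varepsilon = \partial_{x_j}(\cdot)^\varepsilon$ lets one trade powers of $\varepsilon$ for $x$-derivatives.

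Finally the leftover zeroth–order terms — $\mathcal{M}(W\chi_w)u_0$ from $f$, $W^\varepsilon\chi_w^\varepsilon u_0$ and $W^\varepsilon\chi_w^\varepsilon u_0$ (the latter with an extra $\varepsilon\chi_w^\varepsilon$ factor) — are reconciled using $\psi_2$: by $(2.12)$, $-\Delta_y\psi_2 = W\chi_w - \mathcal{M}(W\chi_w)$, so $W^\varepsilon\chi_w^\varepsilon u_0 - \mathcal{M}(W\chi_w)u_0 = -\varepsilon^2\Delta_x\psi_2^\varepsilon u_0$, which after moving the derivatives produces the $-\varepsilon^2\Delta_x\psi_2^\varepsilon u_0$ term in $(3.8)$ together with lower-order divergence pieces. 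The one remaining term, $-\varepsilon\Delta_x\psi_3^\varepsilon\{\Phi_{\varepsilon,j}-x_j-\varepsilon\chi_j^\varepsilon\}\partial_j u_0$, arises from rewriting the $\tfrac{1}{\varepsilon}W^\varepsilon(\Phi_{\varepsilon,j}-x_j)\partial_j u_0$ term via $(3.6)$, $W = \Delta_y\psi_3$, so that $\tfrac{1}{\varepsilon}W^\varepsilon = \varepsilon\Delta_x\psi_3^\varepsilon$, and then further absorbing the $\varepsilon\chi_j^\varepsilon$ correction so that the factor in braces is precisely $\Phi_{\varepsilon,j}-x_j-\varepsilon\chi_j^\varepsilon$, which by Lemma 3.2 is $O(\varepsilon)$ in $L^\infty$ — this is exactly the structure needed later for the $H^1$ estimate. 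I would carry out these manipulations in the order: (i) $u_\varepsilon$ vs.\ $f$; (ii) the Dirichlet-corrector term, extracting $F_{kij}$; (iii) the $\varepsilon\chi_w^\varepsilon u_0$ term, extracting $\psi_{1,i}$; (iv) collect zeroth-order terms via $\psi_2$; (v) rewrite the residual $\tfrac{1}{\varepsilon}W^\varepsilon$ terms via $\psi_3$ — and then verify that what remains is exactly the right-hand side of $(3.8)$, checking each sign against the stated formula.
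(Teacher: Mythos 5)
Your proposal is correct and follows essentially the same route as the paper: a direct computation using the cell problems $(1.7)$--$(1.8)$, the Dirichlet-corrector equation $(1.9)$, the flux correctors $F_{kij}$ with the antisymmetry trick, and the potentials $\psi_{1,i},\psi_2,\psi_3$ to absorb the residual $\tfrac{1}{\varepsilon}W^\varepsilon$ and zeroth-order terms. The only differences are organizational (the paper first isolates $w_\varepsilon'=w_\varepsilon+\varepsilon\chi_w^\varepsilon u_0$ and treats the two pieces separately), and your self-flagged sign question for the $\psi_{1,i}$ term resolves to the $+$ sign in $(3.8)$ exactly via the pointwise identity $\left(a_{ij}^\varepsilon\partial_{y_j}\chi_w^\varepsilon-W^\varepsilon\chi_i^\varepsilon\right)\partial_iu_0=\varepsilon^2\Delta_x\psi_{1,i}^\varepsilon\,\partial_iu_0$, with no extra divergence-form remainder.
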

\begin{proof}
Let $w_\varepsilon= u_\varepsilon-u_0-\left\{\Phi_{\varepsilon,j}(x)-x_j\right\}\partial_{j}u_0-\varepsilon\chi_w^\varepsilon u_0$, and $w_\varepsilon=w_\varepsilon'-\varepsilon\chi_w^\varepsilon u_0$, then directly computation shows that
\begin{equation}
a_{ij}^\varepsilon \partial_{j}w_\varepsilon'=a_{ij}^\varepsilon \partial_{j}u_\varepsilon-a_{ij}^\varepsilon \partial_{j}u_0-a_{ij}^\varepsilon \partial_{j}\left\{\Phi_{\varepsilon,k}(x)-x_k\right\}\partial_{k}u_0-a_{ij}^\varepsilon \left\{\Phi_{\varepsilon,k}(x)-x_k\right\}\partial^2_{jk}u_0,
\end{equation}
Then, according to $\mathcal{L}_\varepsilon u_\varepsilon=\mathcal{L}_0 u_0$ and $-\partial_i\left\{a_{ij}^\varepsilon\partial_{j}\left(\Phi_{\varepsilon,k}(x)-x_k-\varepsilon\chi_k^\varepsilon\right)\right\}=0$ in $\Omega$, we have
\begin{equation}\begin{aligned}
&-\partial_{i}\left(a_{ij}^\varepsilon \partial_{j}w_\varepsilon'\right)\\
=&-\partial_{i}\left(\widehat{a}_{ij} \partial_{j}u_0\right)
+\mathcal{M}(W\chi_w)u_0-\frac{1}{\varepsilon}W^\varepsilon u_\varepsilon+\partial_{i}\left\{a_{ij}^\varepsilon \partial_{j}u_0\right\}\\
&+\partial_{i}\left\{a_{ij}^\varepsilon \partial_{j}\left\{\Phi_{\varepsilon,k}(x)-x_k\right\}\partial_{k}u_0\right\}
+\partial_{i}\left\{a_{ij}^\varepsilon \left\{\Phi_{\varepsilon,k}(x)-x_k\right\}\partial^2_{jk}u_0\right\}\\
=&-\partial_{i}\left\{[\widehat{a}_{ij}-a_{ij}^\varepsilon] \partial_{j}u_0\right\}+\partial_{i}\left\{a_{ij}^\varepsilon \left\{\Phi_{\varepsilon,k}(x)-x_k\right\}\partial^2_{jk}u_0\right\}+\mathcal{M}(W\chi_w)u_0\\
&+a_{ij}^\varepsilon\partial_{j}\left\{\Phi_{\varepsilon,k}(x)-x_k\right\}\partial^2_{ik}u_0
-\frac{1}{\varepsilon}W^\varepsilon u_\varepsilon+\partial_{i}\left\{a_{ij}^\varepsilon\partial_{j}(\varepsilon\chi_k^\varepsilon)\right\}\partial_{k}u_0\\
=&-\partial_{i}\left\{[\widehat{a}_{ij}-a_{ij}^\varepsilon-a_{ik}^\varepsilon\partial_{y_k}\chi_j^\varepsilon] \partial_{j}u_0\right\}
+a_{ij}^\varepsilon\partial_{j}\left\{\Phi_{\varepsilon,k}(x)-x_k-\varepsilon\chi_k^\varepsilon\right\}\partial^2_{ik}u_0\\
&+\partial_{i}\left\{a_{ij}^\varepsilon \left\{\Phi_{\varepsilon,k}(x)-x_k\right\}\partial^2_{jk}u_0\right\}+\mathcal{M}(W\chi_w)u_0
-\frac{1}{\varepsilon}W^\varepsilon u_\varepsilon,
\end{aligned}\end{equation}
where we have used the follow equality
$$\begin{aligned}&\partial_{i}\left\{a_{ij}^\varepsilon \partial_{j}\left\{\Phi_{\varepsilon,k}(x)-x_k\right\}\partial_{k}u_0\right\}\\
=&\partial_{i}\left\{a_{ij}^\varepsilon \partial_{j}\left\{\Phi_{\varepsilon,k}(x)-x_k\right\}\right\}\partial_{k}u_0+a_{ij}^\varepsilon \partial_{j}\left\{\Phi_{\varepsilon,k}(x)-x_k\right\}\partial^2_{ik}u_0\\
=&\partial_{i}\left\{a_{ij}^\varepsilon\partial_{j}(\varepsilon\chi_k^\varepsilon)\right\}\partial_{k}u_0+a_{ij}^\varepsilon \partial_{j}\left\{\Phi_{\varepsilon,k}(x)-x_k\right\}\partial^2_{ik}u_0
\end{aligned}$$ in the second equality.
In view of $(1.8)$, then
\begin{equation}\begin{aligned}
\partial_{x_i}\left[a_{ij}^\varepsilon\partial_{x_j}\left(\varepsilon\chi_w^\varepsilon u_0\right)\right]&=\partial_{x_i}\left(a_{ij}^\varepsilon\varepsilon\chi_w^\varepsilon\partial_{x_j} u_0\right)+\partial_{x_i}\left(a_{ij}^\varepsilon\partial_{y_j}\chi_w^\varepsilon u_0\right)\\
&=\partial_{x_i}\left(a_{ij}^\varepsilon\varepsilon\chi_w^\varepsilon\partial_{x_j} u_0\right)+\frac{1}{\varepsilon}W^\varepsilon u_0+a_{ij}^\varepsilon\partial_{y_j}\chi_w^\varepsilon \partial_{x_i}u_0,
\end{aligned}\end{equation}
Thus, combining $(3.10)$ and $(3.11)$ yields that
\begin{equation}\begin{aligned}
\mathcal{L}_\varepsilon w_\varepsilon&=-\partial_{i}\left(a_{ij}^\varepsilon \partial_{j}w_\varepsilon'\right)+\partial_{i}\left[a_{ij}^\varepsilon\partial_{j}\left(\varepsilon\chi_w^\varepsilon u_0\right)\right]+\frac{1}{\varepsilon}W^\varepsilon w_\varepsilon\\
&=\varepsilon\partial_{i}\left\{F_{kij}^\varepsilon\partial^2_{kj}u_0\right\}
+a_{ij}^\varepsilon\partial_{j}\left\{\Phi_{\varepsilon,k}(x)-x_k-\varepsilon\chi_k^\varepsilon\right\}\partial^2_{ik}u_0
+\partial_{ i}\left(a_{ij}^\varepsilon\varepsilon\chi_w^\varepsilon\partial_{j} u_0\right)\\
&\quad+\partial_{i}\left\{a_{ij}^\varepsilon \left\{\Phi_{\varepsilon,k}(x)-x_k\right\}\partial^2_{jk}u_0\right\}
-\frac{1}{\varepsilon}W^\varepsilon\left\{\Phi_{\varepsilon,j}(x)-x_j-\varepsilon\chi_j^\varepsilon\right\}\partial_{j}u_0\\
&\quad+\left(a_{ij}^\varepsilon\partial_{y_j}\chi_w^\varepsilon-W^\varepsilon\chi_i^\varepsilon\right) \partial_{i}u_0
+\mathcal{M}(W\chi_w)u_0-W^\varepsilon\chi_w^\varepsilon u_0,
\end{aligned}\end{equation}
where we have used $(3.2)$ in the above equality. Consequently, in view of $(3.5)$, $(2.12)$ and $(3.6)$, there holds
\begin{equation}\begin{aligned}
\mathcal{L}_\varepsilon w_\varepsilon
&=\varepsilon\partial_{i}\left\{F_{kij}^\varepsilon\partial^2_{kj}u_0\right\}
+a_{ij}^\varepsilon\partial_{j}\left\{\Phi_{\varepsilon,k}(x)-x_k-\varepsilon\chi_k^\varepsilon\right\}\partial^2_{ik}u_0
+\partial_{i}\left(a_{ij}^\varepsilon\varepsilon\chi_w^\varepsilon\partial_{j} u_0\right)\\
&\quad+\partial_{i}\left\{a_{ij}^\varepsilon \left\{\Phi_{\varepsilon,k}(x)-x_k\right\}\partial^2_{jk}u_0\right\}
-\varepsilon\Delta_x \psi_3^\varepsilon\left\{\Phi_{\varepsilon,j}(x)-x_j-\varepsilon\chi_j^\varepsilon\right\}\partial_{j}u_0\\
&\quad+\varepsilon^2 \Delta_x \psi_{1,i}^\varepsilon \partial_{i}u_0
-\varepsilon^2 \Delta_x \psi_{2}^\varepsilon u_0,
\end{aligned}\end{equation} which is the desired equality $(3.8)$.
\end{proof}
\begin{thm}
Suppose that $A$ satisfies $(1.2)$, $(1.3)$ and $(1.4)$, and $\mathcal{M}({W\chi_w})>-\lambda_{0,1}'$. Let $\Omega$ be a convex  bounded Lipschitz domain or bounded $C^1$ domain in
$\mathbb{R}^d$. For $0\leq \varepsilon \leq \varepsilon_1$, with $\varepsilon_1$ defined in $(2.19)$, and $f\in L^2(\Omega)$, let $u_\varepsilon$ be the unique weak solution in $H^1_0(\Omega)$ to
$\mathcal{L}_\varepsilon u_\varepsilon=f$ in $\Omega$. Then
\begin{equation}
\left\|u_\varepsilon-u_0-\left\{\Phi_{\varepsilon,j}(x)-x_j\right\}\partial_{j}u_0-\varepsilon\chi_w^\varepsilon u_0\right\|_{H^1_0(\Omega)}\leq C\varepsilon ||f||_{L^2(\Omega)},
\end{equation}where $C$ depends only on $\kappa$, $\rho(t)$, $W$ and $\Omega$.
\end{thm}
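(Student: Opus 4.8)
The plan is to test the identity of Lemma 3.3 against $w_\varepsilon$ itself, where $w_\varepsilon=u_\varepsilon-u_0-\{\Phi_{\varepsilon,j}(x)-x_j\}\partial_j u_0-\varepsilon\chi_w^\varepsilon u_0$ is exactly the quantity whose $H^1_0$ norm must be bounded. First note $w_\varepsilon\in H^1_0(\Omega)$: on $\partial\Omega$ one has $u_\varepsilon=u_0=0$, $\Phi_{\varepsilon,j}-x_j=0$, and $\varepsilon\chi_w^\varepsilon u_0=0$ since $u_0=0$ there. Hence by the coercivity estimate $(2.22)$ it suffices to show $|\langle\mathcal{L}_\varepsilon w_\varepsilon,w_\varepsilon\rangle|\le C\varepsilon\|f\|_{L^2(\Omega)}\|w_\varepsilon\|_{H^1_0(\Omega)}$, and since $\langle\mathcal{L}_\varepsilon w_\varepsilon,w_\varepsilon\rangle$ is the pairing of $w_\varepsilon$ with the seven terms on the right of $(3.8)$, it is enough to estimate each of those by $C\varepsilon\|f\|_{L^2}\|w_\varepsilon\|_{H^1_0}$; throughout I would use the a priori bounds $\|\nabla u_0\|_{L^2(\Omega)}+\|\nabla^2 u_0\|_{L^2(\Omega)}\le C\|f\|_{L^2(\Omega)}$ from $(2.24)$--$(2.25)$.

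The routine terms are $\varepsilon\partial_i\{F_{kij}^\varepsilon\partial^2_{kj}u_0\}$, $\partial_i(a_{ij}^\varepsilon\varepsilon\chi_w^\varepsilon\partial_j u_0)$ and $\partial_i\{a_{ij}^\varepsilon\{\Phi_{\varepsilon,k}-x_k\}\partial^2_{jk}u_0\}$, which are in divergence form with coefficients of size $O(\varepsilon)$ in $L^2(\Omega)$ --- using $\|F_{kij}\|_{L^\infty}\le C$ from Lemma 3.1, $\|\chi_w\|_{L^\infty}\le C$ from the De Giorgi--Nash--Moser theorem, and $\|\Phi_{\varepsilon,k}-x_k\|_{L^\infty}\le C\varepsilon$ from Lemma 3.2 --- so integrating by parts against $w_\varepsilon$ bounds them by $C\varepsilon\|f\|_{L^2}\|\nabla w_\varepsilon\|_{L^2}$. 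For $\varepsilon^2\Delta_x\psi_{1,i}^\varepsilon\partial_i u_0$ and $\varepsilon^2\Delta_x\psi_2^\varepsilon u_0$ I would use $\varepsilon^2\Delta_x\psi^\varepsilon=\varepsilon\,\partial_{x_k}[(\partial_{y_k}\psi)^\varepsilon]$ to rewrite each, modulo an $O(\varepsilon)$ lower-order remainder, as $\varepsilon\,\partial_{x_k}$ of a bounded function (here $\nabla_y\psi_{1,i},\nabla_y\psi_2\in L^\infty(Y)$ since $\psi\in W^{2,p}(Y)$ for all finite $p$), and the same estimate applies; likewise, the part of $\varepsilon\Delta_x\psi_3^\varepsilon\{\Phi_{\varepsilon,j}-x_j-\varepsilon\chi_j^\varepsilon\}\partial_j u_0$ that survives after one integration by parts is $O(\varepsilon)$ because $\|\Phi_{\varepsilon,j}-x_j-\varepsilon\chi_j^\varepsilon\|_{L^\infty}\le C\varepsilon$.

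The genuinely delicate contribution is the non-divergence term $a_{ij}^\varepsilon\partial_j\{\Phi_{\varepsilon,k}-x_k-\varepsilon\chi_k^\varepsilon\}\partial^2_{ik}u_0$ (and the remaining residual piece of the $\psi_3$ term), since $\theta_{\varepsilon,k}:=\Phi_{\varepsilon,k}-x_k-\varepsilon\chi_k^\varepsilon$ does not have a gradient small in $L^2(\Omega)$ --- it is only of order $\varepsilon^{1/2}$, concentrated near $\partial\Omega$. Here I would use the identity $-\operatorname{div}(A^\varepsilon\nabla\theta_{\varepsilon,k})=0$ in $\Omega$ (recorded in the proof of Lemma 3.3), so that $a_{ij}^\varepsilon\partial_j\theta_{\varepsilon,k}$ is divergence-free in $i$ and the term equals $\partial_i\{a_{ij}^\varepsilon\partial_j\theta_{\varepsilon,k}\,\partial_k u_0\}$; then, following \cite{kenig2013estimates}, I would exploit the hypothesis that $\Omega$ is $C^1$ or convex Lipschitz --- which yields $u_0\in H^2(\Omega)$, the uniform bound $\|\nabla\Phi_{\varepsilon,k}\|_{L^p(\Omega)}\le C_p$ for all finite $p$, and the localization $\|\nabla\theta_{\varepsilon,k}\|_{L^2(\Omega\setminus\Omega_t)}\le C\varepsilon t^{-1/2}$ of the corrector gradient to the boundary layer $\Omega_t=\{x\in\Omega:\operatorname{dist}(x,\partial\Omega)<t\}$ --- together with the Poincaré inequality $\|w_\varepsilon\|_{L^2(\Omega_{c\varepsilon})}\le C\varepsilon\|\nabla w_\varepsilon\|_{L^2}$ on the $c\varepsilon$-collar and $\|\theta_{\varepsilon,k}\|_{L^\infty}\le C\varepsilon$, splitting the integral over $\Omega_{c\varepsilon}$ and its complement (and interpolating with the Sobolev embedding for $\nabla u_0$) to bound it by $C\varepsilon\|f\|_{L^2}\|w_\varepsilon\|_{H^1_0}$. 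Collecting the seven estimates and cancelling one factor of $\|w_\varepsilon\|_{H^1_0(\Omega)}$ gives $(3.14)$. I expect this last step --- controlling the Dirichlet-corrector term despite $\nabla\theta_{\varepsilon,k}$ failing to be $O(\varepsilon)$ in $L^2$, which forces one to use simultaneously the divergence-free structure of the flux $a_{ij}^\varepsilon\partial_j\theta_{\varepsilon,k}$ and the quantitative confinement of $\nabla\theta_{\varepsilon,k}$ to a layer of width $\sim\varepsilon$ --- to be the main obstacle, and it is precisely here that the $C^1$/convexity assumption is indispensable; the genuinely new terms relative to \cite{kenig2013estimates}, namely those carrying $\chi_w$, $\psi_1$, $\psi_2$ and $\psi_3$, are all of the routine type handled above.
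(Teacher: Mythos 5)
Your overall architecture is exactly the paper's: form $w_\varepsilon$ as in $(3.7)$, use the identity of Lemma 3.3 for $\mathcal{L}_\varepsilon w_\varepsilon$, pair against $w_\varepsilon$, invoke the coercivity $(2.21)$ and the bounds $(2.23)$--$(2.24)$ for $u_0$, and handle the divergence-form terms and the $\psi_1,\psi_2,\psi_3$ terms by one integration by parts together with $\|F_{kij}\|_\infty+\|\chi_w\|_\infty+\|\nabla_y\psi\|_\infty\le C$ and $\|\Phi_{\varepsilon,k}-x_k\|_\infty\le C\varepsilon$. You also correctly isolate the one genuinely delicate term, the one carrying $\nabla\theta_{\varepsilon,k}=\nabla\{\Phi_{\varepsilon,k}-x_k-\varepsilon\chi_k^\varepsilon\}$, and you name the two structural facts that save it ($-\operatorname{div}(A^\varepsilon\nabla\theta_{\varepsilon,k})=0$ and $\|\theta_{\varepsilon,k}\|_\infty\le C\varepsilon$).

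However, the way you propose to assemble those facts for the delicate term has a genuine gap under the stated hypotheses ($A\in VMO$, $\Omega$ only $C^1$ or convex Lipschitz). Rewriting the term as $\partial_i\{a_{ij}^\varepsilon\partial_j\theta_{\varepsilon,k}\,\partial_k u_0\}$ and integrating by parts leaves you needing $\|\,|\nabla\theta_{\varepsilon,k}|\,|\nabla u_0|\,\|_{L^2}\le C\varepsilon\|f\|_{L^2}$, which is false: $\|\nabla\theta_{\varepsilon,k}\|_{L^2(\Omega)}\sim\varepsilon^{1/2}$ and even $\|\nabla\theta_{\varepsilon,k}\|_{L^d(\Omega)}\sim\varepsilon^{1/d}$, so H\"older plus Sobolev for $\nabla u_0$ cannot reach $O(\varepsilon)$. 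Your alternative splitting over $\Omega_{c\varepsilon}$ and its complement, using $\|\nabla\theta_{\varepsilon,k}\|_{L^2(\Omega\setminus\Omega_t)}\le C\varepsilon t^{-1/2}$ and the collar Poincar\'e inequality, only closes if one also has the pointwise bounds $|\nabla\theta_{\varepsilon,k}(x)|\le C\min\{1,\varepsilon/\mathrm{dist}(x,\partial\Omega)\}$, i.e.\ interior and boundary Lipschitz estimates, which require H\"older-continuous coefficients and are not available for merely VMO $A$; the $L^2$-layer bounds alone cannot be paired with $w_\varepsilon$ to produce $\varepsilon^2\|\nabla w_\varepsilon\|_{L^2}^2$. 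The paper keeps the term in non-divergence form, reduces it via Cauchy--Schwarz with $\|\nabla^2u_0\|_{L^2}\le C\|f\|_{L^2}$ to the weighted Caccioppoli inequality $(3.18)$, $\int_\Omega|\nabla\theta_{\varepsilon,k}|^2|w_\varepsilon|^2\le C\varepsilon^2\int_\Omega|\nabla w_\varepsilon|^2$, and proves that by testing $-\operatorname{div}(A^\varepsilon\nabla h_\varepsilon)=0$ against $h_\varepsilon|w_\varepsilon|^2$ and absorbing, which uses only ellipticity and $\|h_\varepsilon\|_\infty\le C\varepsilon$; the $C^1$/convexity hypothesis enters only to justify the integrability of that test function. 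This absorption trick is the missing ingredient in your argument, and without it (or without strengthening the smoothness assumptions on $A$ and $\Omega$) the delicate term is not controlled.
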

\begin{proof}
Let $w_\varepsilon$ be given by $(3.7)$. It is easy to see that under the  assumptions in the Theorem 3.4, $w_\varepsilon\in H^1_0(\Omega)$.
It follows from integration by parts that
\begin{equation}\begin{aligned}
&\left|\int_\Omega\varepsilon\Delta_x \psi_3^\varepsilon\left\{\Phi_{\varepsilon,j}(x)-x_j-\varepsilon\chi_j^\varepsilon\right\}\partial_{j}u_0 w_\varepsilon dx\right|\\
\leq& C\int_\Omega\left\{\left|\nabla\left\{\Phi_{\varepsilon,j}(x)-x_j-\varepsilon\chi_j^\varepsilon\right\}\right||\nabla u_0||w_\varepsilon|+\left|\left\{\Phi_{\varepsilon,j}(x)-x_j-\varepsilon\chi_j^\varepsilon\right\}\right||\nabla^2 u_0||w_\varepsilon|\right\}dx\\
&+C\int_\Omega \left|\left\{\Phi_{\varepsilon,j}(x)-x_j-\varepsilon\chi_j^\varepsilon\right\}\right| |\nabla u_0||\nabla w_\varepsilon|dx\\
\leq &C\int_\Omega\left|\nabla\left\{\Phi_{\varepsilon,j}(x)-x_j-\varepsilon\chi_j^\varepsilon\right\}\right||\nabla u_0||w_\varepsilon|dx+C\varepsilon||u_0||_{H^2(\Omega)}||\nabla w_\varepsilon||_{L^2(\Omega)},
\end{aligned}\end{equation} where we have used $||\nabla_y \psi_{3}^\varepsilon||_{\infty}\leq C$ and $||\Phi_{\varepsilon,j}(x)-x_j-\varepsilon\chi_j^\varepsilon||_{L^\infty(\Omega)}\leq C \varepsilon$ in the above inequality.
Similarly, there holds
\begin{equation}
\left|\int_\Omega\left(\varepsilon^2 \Delta_x \psi_{1,i}^\varepsilon \partial_{x_i}u_0
-\varepsilon^2 \Delta_x \psi_{2}^\varepsilon u_0\right)w_\varepsilon dx\right|\leq C\varepsilon||u_0||_{H^2(\Omega)}||\nabla w_\varepsilon||_{L^2(\Omega)}.
\end{equation}
Consequently, it follows from $(2.21)$ and $(3.13)$ that
\begin{equation}\begin{aligned}
\int_\Omega|\nabla w_\varepsilon|^2dx\leq& C\int_\Omega\left|\nabla\left\{\Phi_{\varepsilon,j}(x)-x_j-\varepsilon\chi_j^\varepsilon\right\}\right|(|\nabla u_0|+|\nabla^2 u_0|)|w_\varepsilon|dx\\
&+ C\varepsilon ||u_0||_{H^2(\Omega)}||\nabla w_\varepsilon||_{L^2(\Omega)},
\end{aligned}\end{equation}
where we have used the estimates $||F_{kij}||_{\infty}\leq C$ of Lemma 3.1 and $||\Phi_{\varepsilon,j}(x)-x_j||_{\infty}\leq C$. We claim that
\begin{equation}
\int_\Omega\left|\nabla\left\{\Phi_{\varepsilon,j}(x)-x_j-\varepsilon\chi_j^\varepsilon\right\}\right|^2|w_\varepsilon|^2dx\leq C_0 \varepsilon^2 \int_\Omega|\nabla w_\varepsilon|^2 dx.
\end{equation}
Therefore, it follows form $(2.23)$ ,$(2.24)$, $(3.17)$ and $(3.18)$ that
\begin{equation}
||w_\varepsilon||_{H^1_0(\Omega)}\leq C||\nabla w_\varepsilon||_{L^2(\Omega)}\leq C \varepsilon ||u_0||_{H^2(\Omega)}\leq C\varepsilon||f||_{L^2(\Omega)}.
\end{equation}
To see $(3.18)$, we fix $1\leq j_0\leq d$ and let $$h_\varepsilon(x)=\Phi_{\varepsilon,j_0}(x)-x_{j_0}-\varepsilon\chi_{j_0}^\varepsilon \text{\quad in } \Omega.$$
Note that if $\Omega$ is a bounded $C^1$ domain, then $h_\varepsilon\in W^{1,p}(\Omega)\cap L^\infty(\Omega)$, with any $1<p<\infty$, due to $A\in \text{VMO}(\mathbb{R}^d)$ \cite{2008Zhongwei} and
$-\operatorname{div}(A^\varepsilon\nabla h_\varepsilon)=0 \text{ in } \Omega$, as well as $w_\varepsilon\in L^{\frac{2d}{d-2}}$ if $d\geq 2$,  and
$w_\varepsilon\in L^q(\Omega)$ for any $1<q<\infty$ if $d=2$; or if $\Omega$ is a bounded convex domain, then $w_\varepsilon\in L^\infty(\Omega)$. It follows that
\begin{equation}\begin{aligned}
\kappa \int_{\Omega }|\nabla h_\varepsilon|^2|w_\varepsilon|^2dx
&\leq \int_\Omega a_{ij}^\varepsilon \partial_{x_i}h_\varepsilon\partial_{x_j}h_\varepsilon|w_\varepsilon|^2dx\\
& =-2\int_\Omega h_\varepsilon w_\varepsilon a_{ij}^\varepsilon \partial_{x_i}h_\varepsilon\partial_{x_j}w_\varepsilon dx.
\end{aligned}\end{equation}
Hence,
\begin{equation}
\int_{\Omega }|\nabla h_\varepsilon|^2|w_\varepsilon|^2dx\leq C\int_{\Omega} |h_\varepsilon||\nabla h_\varepsilon||\nabla w_\varepsilon||w_\varepsilon|dx,
\end{equation}where $C$ depends only on $d$ and $\kappa$. Then the estimate $(3.18)$ now follows from $(3.21)$ by the Cauchy inequality and $||h_\varepsilon||_{\infty}\leq C \varepsilon$.
\end{proof}

At this position, we are ready to prove Theorem 1.1. Note that in Theorem 3.4, we don't assume that $\mathcal{L}_0$ is the effective operator of
$\mathcal{L}_\varepsilon$, although, to some extend, $\mathcal{L}_0$ is actually related to $\mathcal{L}_\varepsilon$ since we have obtained the
$H^1_0(\Omega)$ convergence rates.\\

\textbf{Proof of the Theorem 1.1:} It follows from $(2.21)$ that,
\begin{equation}
\left\langle\mathcal{L}_\varepsilon v, v\right\rangle\geq c||v||_{H^1_0(\Omega)}^2, \ c>0, \ \forall v\in H^1_0(\Omega),
\end{equation}
for $0<\varepsilon\leq \varepsilon_1$. This proves that equation admits a unique solution for $0<\varepsilon\leq \varepsilon_1$ and that
\begin{equation}
||u_\varepsilon||_{H^1_0(\Omega)}\leq C.
\end{equation}

Then there exists $u_0\in H^1_0(\Omega)$ such that $u_\varepsilon\rightharpoonup u_0$ weakly in $H^1_0(\Omega)$. In this paper, we provide a proof with an
interesting observation, and note that we don't use the so-called div-curl lemma. The natural ideal is that let $\varepsilon\rightarrow 0$ in the equation $(1.1)$, and then obtain the limiting equation.

First, we multiply the equation $(1.1)$ by $\phi\theta_\varepsilon$ and integrate the resulting equation over $\Omega$, with $\phi\in C^\infty_0(\Omega)$ and $\theta_\varepsilon=(1+\varepsilon\chi_w^\varepsilon)$, then we have
\begin{equation}
\int_\Omega a_{ij}^\varepsilon \partial_{x_j}u_\varepsilon \partial_{x_i}\phi \theta_\varepsilon+\int_\Omega a_{ij}^\varepsilon \partial_{x_j}u_\varepsilon \partial_{x_i} \theta_\varepsilon\phi+\frac{1}{\varepsilon}\int_\Omega W^\varepsilon u_\varepsilon\phi\theta_\varepsilon =\int_\Omega f\phi\theta_\varepsilon.
\end{equation}
In view of $(1.8)$ and $A^*=A$, then integration by parts yields
\begin{equation}\begin{aligned}
\int_\Omega a_{ij}^\varepsilon \partial_{x_j}u_\varepsilon \partial_{x_i} \theta_\varepsilon\phi&=\int_\Omega a_{ij}^\varepsilon \partial_{y_j} \chi_w^\varepsilon \partial_{x_i}u_\varepsilon \phi\\
&=-\int_\Omega \partial_{x_i}\left(a_{ij}^\varepsilon \partial_{y_j} \chi_w^\varepsilon \right)u_\varepsilon \phi-\int_\Omega a_{ij}^\varepsilon \partial_{y_j} \chi_w^\varepsilon \partial_{x_i}\phi u_\varepsilon \\
&=-\int_\Omega \frac{1}{\varepsilon}W^\varepsilon u_\varepsilon \phi-\int_\Omega a_{ij}^\varepsilon \partial_{y_j} \chi_w^\varepsilon \partial_{x_i}\phi u_\varepsilon.
\end{aligned}\end{equation}
Consequently, combining $(3.24)$ and $(3.25)$ gives
\begin{equation}
\int_\Omega a_{ij}^\varepsilon \partial_{x_j}u_\varepsilon \partial_{x_i}\phi \theta_\varepsilon-\int_\Omega a_{ij}^\varepsilon \partial_{y_j} \chi_w^\varepsilon \partial_{x_i}\phi u_\varepsilon+\int_\Omega W^\varepsilon\chi_w^\varepsilon u_\varepsilon\phi =\int_\Omega f\phi\theta_\varepsilon.
\end{equation}
In view of $(3.14)$, if we additional assume that $\Omega$ is a bounded $C^1$ domain or bounded convex Lipschitz domain, then there hold
\begin{equation}
u_\varepsilon=u_0+O(\varepsilon||u_0||_{H^2(\Omega)}) \text{ in }L^2(\Omega)
\end{equation}
and
\begin{equation}\begin{aligned}
&\partial_k u_\varepsilon\\
=&\partial_k u_0+\partial_k\left\{\Phi_{\varepsilon,j}(x)-x_j\right\}\partial_{j}u_0+\left\{\Phi_{\varepsilon,j}(x)-x_j\right\}\partial^2_{jk}u_0+\partial_{y_k}\chi_w^\varepsilon u_0+\varepsilon\chi_w^\varepsilon \partial_k u_0+O(\varepsilon)\\
=&\partial_k u_0+\partial_{y_k}\chi_j^\varepsilon\partial_j u_0
+\partial_k\left\{\Phi_{\varepsilon,j}(x)-x_j-\varepsilon\chi_j^\varepsilon\right\}\partial_{j}u_0+\left\{\Phi_{\varepsilon,j}(x)
-x_j\right\}\partial^2_{jk}u_0\\
&\quad+\partial_{y_k}\chi_w^\varepsilon u_0+\varepsilon\chi_w^\varepsilon \partial_k u_0+O(\varepsilon||u_0||_{H^2(\Omega)}) \text{ in }L^2(\Omega), \text{ for }k=1,\cdots,d.
\end{aligned}\end{equation}
Due to $(3.28)$, there holds
\begin{equation}\begin{aligned}
\int_\Omega a_{ij}^\varepsilon \partial_{x_j}u_\varepsilon \partial_{x_i}\phi \theta_\varepsilon=&\int_\Omega a_{ij}^\varepsilon \partial_{y_j}u_0 \partial_{x_i}\phi \theta_\varepsilon+\int_\Omega a_{ij}^\varepsilon \partial_{y_j}\chi_k^\varepsilon\partial_{x_k}u_0 \partial_{x_i}\phi\theta_\varepsilon\\
&+I_1+I_2+\int_\Omega a_{ij}^\varepsilon\partial_{y_j}\chi_w^\varepsilon u_0\partial_{x_i}\phi \theta_\varepsilon+I_3+O(\varepsilon ||\nabla \phi||_{L^2(\Omega)}||u_0||_{H^2(\Omega)}),
\end{aligned}\end{equation}
where
\begin{equation}\begin{aligned}
|I_1|^2&=\left|\int_\Omega a_{ij}^\varepsilon\partial_j\left\{\Phi_{\varepsilon,k}(x)-x_k-\varepsilon\chi_k^\varepsilon\right\}\partial_{j}u_0 \partial_{x_i}\phi\theta_\varepsilon\right|^2\\
&\leq C||\nabla u_0||^2_{L^2(\Omega)}\int_\Omega\left|\nabla\left\{\Phi_{\varepsilon,j}(x)-x_j-\varepsilon\chi_j^\varepsilon\right\}\right|^2|\nabla \phi|^2dx\\
&\leq C\varepsilon^2||\nabla u_0||^2_{L^2(\Omega)}C||\nabla^2 \phi||^2_{L^2(\Omega)},
\end{aligned}\end{equation}where we have used $(3.18)$ with $w_\varepsilon$ replaced by $\nabla\phi$ in the above inequality. Similarly, we have
\begin{equation}\begin{aligned}
|I_2|&=\left|\int_\Omega a_{ij}^\varepsilon\left\{\Phi_{\varepsilon,k}(x)-x_k\right\}\partial^2_{jk}u_0\partial_{x_i}\phi\theta_\varepsilon\right|\\
\
&\leq C\varepsilon ||\nabla^2 u_0||_{L^2(\Omega)}||\nabla \phi||_{L^2(\Omega)},
\end{aligned}\end{equation}

and
\begin{equation}
|I_3|=\varepsilon\left|\int_\Omega a_{ij}^\varepsilon\chi_w^\varepsilon \partial _{x_j}u_0\partial_{x_i}\phi \theta_\varepsilon\right|\leq C\varepsilon ||\nabla^2 u_0||_{L^2(\Omega)}||\nabla \phi||_{L^2(\Omega)}.
\end{equation}
As $\varepsilon\rightarrow 0$, combining $(3.29)$-$(3.32)$, there holds
\begin{equation}\begin{aligned}
\int_\Omega a_{ij}^\varepsilon \partial_{x_j}u_\varepsilon \partial_{x_i}\phi \theta_\varepsilon&\rightarrow \int_\Omega \mathcal{M}(a_{ij}+a_{ik} \partial_{y_k}\chi_j) \partial_{y_j}u_0 \partial_{x_i}\phi+\int_\Omega \mathcal{M}(a_{ij}\partial_{y_j}\chi_w) u_0\partial_{x_i}\phi \\
&=\int_\Omega \widehat{a}_{ij} \partial_{y_j}u_0 \partial_{x_i}\phi+\int_\Omega \mathcal{M}(a_{ij}\partial_{y_j}\chi_w) u_0\partial_{x_i}\phi.
\end{aligned}\end{equation}

Similarly, as $\varepsilon\rightarrow 0$,
\begin{equation}
-\int_\Omega a_{ij}^\varepsilon \partial_{y_j} \chi_w^\varepsilon \partial_{x_i}\phi u_\varepsilon+\int_\Omega W^\varepsilon\chi_w^\varepsilon u_\varepsilon\phi\rightarrow -\int_\Omega \mathcal{M}(a_{ij}\partial_{y_j}\chi_w) u_0\partial_{x_i}\phi+\int_\Omega \mathcal{M}(W\chi_w) u_0\phi,
\end{equation}

Therefore, combining $(3.26)$, $(3.33)$ and $(3.34)$ and letting $\varepsilon\rightarrow0$ yields
\begin{equation}
\int_\Omega \widehat{a}_{ij} \partial_{y_j}u_0 \partial_{x_i}\phi+\int_\Omega \mathcal{M}(W\chi_w) u_0\phi=\int_\Omega f\phi,
\end{equation}which is the effective equation $(1.5)$ that we want to seek.

We have completed the proof under the assumption that $\Omega$ is a bounded $C^1$ domain or bounded convex Lipschitz domain. If $\Omega$ is only a bounded Lipschitz domain, then we may need another $H^1$ convergence rate estimates with the help of $\varepsilon$-smoothing method and the first order correctors. We first give some notations. Fix a nonnegative function $\vartheta\in C_0^\infty(B(0,1/2))$ such that $\int_{\mathbb{R}^n}\vartheta dx=1$. For $\varepsilon>0,$
define \begin{equation*}
S_\varepsilon(f)(x)=\vartheta_\varepsilon\ast f(x)=\int_{\mathbb{R}^n}f(x-y)\vartheta_\varepsilon(y)dy,\end{equation*}
where $\vartheta_\varepsilon(y)=\varepsilon^{-n}\vartheta(y/\varepsilon)$. And fix a cut-off function $\eta_\varepsilon\in C^\infty_0(\Omega)$ such that
\begin{equation*}\left\{\begin{aligned}
0 \leq \eta_{\varepsilon} & \leq 1, & &\left|\nabla \eta_{\varepsilon}\right| \leq C / \varepsilon, \\
\eta_{\varepsilon}(x) &=1 & &\text { if } \operatorname{dist}(x, \partial \Omega) \geq 4 \varepsilon, \\
\eta_{\varepsilon}(x) &=0 & & \text { if } \operatorname{dist}(x, \partial \Omega) \leq 3 \varepsilon.
\end{aligned}\right.\end{equation*}
Let $S_\varepsilon^2=S_\varepsilon\circ S_\varepsilon$ and let $w_\varepsilon=u_\varepsilon-u_0-\varepsilon\chi_j^\varepsilon\eta_\varepsilon S_\varepsilon^2(\partial_j u_0)-\varepsilon\chi_w u_0$, we could obtain that
\begin{equation}||w_\varepsilon||_{H^1_0(\Omega)}\leq C\sqrt{\varepsilon}||u_0||_{H^2(\Omega)},\end{equation}
with $\Omega$ being a bounded Lipschitz domain. For the details proof of $(3.36)$, which we omit it in this paper, we may refer readers to \cite[Chapter 3.2]{shen2018periodic} for the case of $W=0$, and the proof of Lemma 3.3 to handle the nonzero potential $\frac{1}{\varepsilon}W(x/\varepsilon)$. Therefore, we may replace $(3.27)$ by the following equality
\begin{equation}
u_\varepsilon=u_0+O(\sqrt{\varepsilon}||u_0||_{H^2(\Omega)}) \text{ in }L^2(\Omega),
\end{equation} Actually, by a duality argument, we can show that $u_\varepsilon=u_0+O({\varepsilon}||u_0||_{H^2(\Omega)}) \text{ in }L^2(\Omega)$. And we may replace $(3.28)$ by the following equality,
\begin{equation}\begin{aligned}
\partial_k u_\varepsilon=\partial_k u_0+\partial_{y_k}\chi_j^\varepsilon \partial_j u_0+
\partial_{y_k}\chi_w^\varepsilon u_0+\partial_{y_k}\chi_j^\varepsilon(\eta_\varepsilon S_\varepsilon^2(\partial_ju_0)-\partial_ju_0)
+\varepsilon \chi_j^\varepsilon \partial_j \eta_\varepsilon S_\varepsilon^2(\partial_ju_0)\\
+\varepsilon \chi_j^\varepsilon  \eta_\varepsilon S_\varepsilon^2(\partial^2_{jk}u_0)
+\varepsilon\chi_w^\varepsilon \partial_k u_0+O(\sqrt{\varepsilon}||u_0||_{H^2(\Omega)}) \text{ in }L^2(\Omega), \text{ for }k=1,\cdots,d.
\end{aligned}\end{equation}
Consequently, following the proofs of $(3.29)$-$(3.34)$, we may come to the equation $(3.35)$.
\qed

\begin{rmk}To some extend, the proof above is the inverse procedure of the two-scale asymptotic expansions.
In order to make the ideal more claer, we conclude it as the following three steps.

Step 1: Due to the two-scale asymptotic expansions, we may formally obtain that $u_\varepsilon=u_0+\varepsilon\chi_j^\varepsilon\partial_{x_j}u_0+\varepsilon\chi_w^\varepsilon u_0+O(\varepsilon)$, where $\chi_j$ and $\chi_w$ are the correctors defined in $(1.7)$ and $(1.8)$, respectively, and $u_0$ satisfies the equation $(1.5)$.

Step 2: For any $f\in L^2(\Omega)$, and $u_\varepsilon\in H^1_0(\Omega)$ satisfies the equation $\mathcal{L}_\varepsilon u_\varepsilon=f$. Then there exists  $\tilde{u}\in H^2(\Omega)\cap H^1_0(\Omega)$ solving $\mathcal{L}_0 \tilde{u}=f$ (actually, under the condition $\mathcal{M}(W\chi_w)>-\lambda_1$, the solution is unique, and $\tilde{u}=u_0$). On account of the boundary condition, we need to obtain the $H^1_0$ as well as the $L^2$ convergence rates, namely, the estimates $(3.14)$.

Step 3: After obtaining the convergence rates for the $L^2$-norm and $H^1_0$-norm, we can test the equation with suitable function, and use the asymptotic expansions for $u_\varepsilon$ and $\nabla u_\varepsilon$ due to the convergence rates. Consequently, let $\varepsilon\rightarrow 0$ and then the limiting equation is exactly that we want to seek.
\end{rmk}

\section{Convergence rates for eigenvalues}
The goal of this section is to prove Theorem 1.4. For $\varepsilon_1\geq \varepsilon\geq 0$ and $f\in L^2(\Omega)$, under the conditions in Theorem 1.4,
the elliptic equation $\mathcal{L}_\varepsilon=f$ in $\Omega$ has a unique weak solution in $H^1_0(\Omega)$. Define $T_\varepsilon(f)=u_\varepsilon$.
According to $(2.21)$ and $(2.22)$, we have $||u_\varepsilon||_{H^1_0(\Omega)}\leq C ||f||_{L^2(\Omega)}$, where $C$ depends only on $A$, $W$, $d$ and $\Omega$, and then the linear operator
$T_\varepsilon$ is bounded, positive and compact on $L^2(\Omega)$. In view of $a_{ij}=a_{ji}$, the operator $T_\varepsilon$ is also self-adjoint. Let
\begin{equation}\mu_{\varepsilon, 1} \geq \mu_{\varepsilon, 2} \geq \cdots \geq \cdots>0\end{equation}
be the sequence of eigenvalues of $T_\varepsilon$ in decreasing order. By the mini-max principle,
\begin{equation}
\mu_{\varepsilon, k}=\min _{\small{f_{1}, \ldots, f_{k-1}}\atop\small{ \in L^2(\Omega)}} \max _{\small{\|f\|_{L^{2}(\Omega)}=1}, \small{f\perp f_i}\atop \small{i=1,\cdots,k-1}}\langle T_\varepsilon(f),f\rangle,
\end{equation}
where $\langle,\rangle$ denotes the inner product in $L^2(\Omega)$. Note that
\begin{equation}\left\langle T_{\varepsilon}(f), f\right\rangle=\left\langle u_{\varepsilon}, f\right\rangle=\int_{\Omega} \left(a_{i j}^\varepsilon \frac{\partial u_\varepsilon}{\partial x_{i}}\frac{\partial u_\varepsilon}{\partial x_{j}}+\frac{1}{\varepsilon}W^\varepsilon u_\varepsilon^2 \right)d x\end{equation}

and
\begin{equation}\left\langle T_{0}(f), f\right\rangle=\left\langle u_{0}, f\right\rangle=\int_{\Omega} \left(\widehat{a}_{i j} \frac{\partial u_0}{\partial x_{i}}\frac{\partial u_0}{\partial x_{j}}+\mathcal{M}({W\chi_w})u_0^2 \right)d x.\end{equation}

For $0\leq \varepsilon \leq\varepsilon_1$, let $\{\varphi_{\varepsilon,k}\}$ ba an orthonormal basis of $L^2(\Omega)$, where $\{\varphi_{\varepsilon,k}\}$ is an eigenfunctions associated with $\mu_{\varepsilon,k}$. Let $V_{\varepsilon,0}=\{0\}$ and $V_{\varepsilon,k}$ be the subspace of $L^2(\Omega)$ spanned by $\{\varphi_{\varepsilon,1},\cdots,\varphi_{\varepsilon,k}\}$ for $k\geq 1$. Then

\begin{equation}
\mu_{\varepsilon, k}=\max _{\small{f \perp V_{\varepsilon, k-1}}\atop \small{\|f\|_{L^{2}(\Omega)}=1}}\left\langle T_{\varepsilon}(f), f\right\rangle.
\end{equation}
Denote $\lambda_{\varepsilon,k}=\mu_{\varepsilon, k}^{-1}$. Then $\{\lambda_{\varepsilon,k}\}$ is the sequence of Dirichlet eigenvalues of $\mathcal{L}_\varepsilon$ in $\Omega$ in increasing order.

\begin{lemma}
Suppose that $A$ satisfies the conditions $(1.2)$-$(1.4)$ and the the symmetry condition $A^*=A$, and assume $\mathcal{M}(W\chi_w)>-\lambda_{0,1}'$. Then
\begin{equation}\left|\mu_{\varepsilon, k}-\mu_{0, k}\right| \leq \max \left\{\max _{f \perp V_{0, k-1}\atop \|f\|_{L^{2}(\Omega)}=1}\left|\left\langle\left(T_{\varepsilon}-T_{0}\right) f, f\right\rangle\right|, \max _{f \perp V_{\varepsilon, k-1}\atop \|f\|_{L^{2}(\Omega)}=1}\left|\left\langle\left(T_{\varepsilon}-T_{0}\right) f, f\right\rangle\right|\right\}\end{equation}
\end{lemma}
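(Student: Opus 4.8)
The plan is to prove the two-sided bound $|\mu_{\varepsilon,k}-\mu_{0,k}|$ by the maximum on the right-hand side via a standard min-max comparison argument. First I would fix $k$ and prove $\mu_{\varepsilon,k}-\mu_{0,k}$ is bounded above by the quantity $\max_{f\perp V_{0,k-1},\,\|f\|_{L^2}=1}|\langle (T_\varepsilon-T_0)f,f\rangle|$. To do this, I start from the characterization $\mu_{0,k}=\max_{f\perp V_{0,k-1},\,\|f\|_{L^2(\Omega)}=1}\langle T_0(f),f\rangle$ in $(4.6)$ and simultaneously use the min-max form $(4.2)$ for $\mu_{\varepsilon,k}$, choosing the test subspace in $(4.2)$ to be spanned by the first $k-1$ eigenfunctions $\varphi_{0,1},\dots,\varphi_{0,k-1}$ of $T_0$; this is admissible since $(4.2)$ minimizes over all choices of $f_1,\dots,f_{k-1}\in L^2(\Omega)$. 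Then for any $f\perp V_{0,k-1}$ with $\|f\|_{L^2(\Omega)}=1$ we have
\begin{equation*}
\langle T_\varepsilon(f),f\rangle=\langle T_0(f),f\rangle+\langle(T_\varepsilon-T_0)f,f\rangle\leq \langle T_0(f),f\rangle+\max_{g\perp V_{0,k-1},\,\|g\|_{L^2(\Omega)}=1}|\langle(T_\varepsilon-T_0)g,g\rangle|,
\end{equation*}
and taking the maximum over such $f$ on the left (which dominates $\mu_{\varepsilon,k}$ by $(4.2)$) and using $(4.6)$ on the first term on the right gives $\mu_{\varepsilon,k}\le \mu_{0,k}+\max_{g\perp V_{0,k-1},\,\|g\|_{L^2(\Omega)}=1}|\langle(T_\varepsilon-T_0)g,g\rangle|$.

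Next I would prove the reverse inequality $\mu_{0,k}-\mu_{\varepsilon,k}$ is bounded above by $\max_{f\perp V_{\varepsilon,k-1},\,\|f\|_{L^2(\Omega)}=1}|\langle (T_\varepsilon-T_0)f,f\rangle|$ by the symmetric argument: use $(4.5)$ to write $\mu_{\varepsilon,k}=\max_{f\perp V_{\varepsilon,k-1},\,\|f\|_{L^2(\Omega)}=1}\langle T_\varepsilon(f),f\rangle$, and use the min-max form of $\mu_{0,k}$ analogous to $(4.2)$ with the test subspace taken to be $V_{\varepsilon,k-1}=\mathrm{span}\{\varphi_{\varepsilon,1},\dots,\varphi_{\varepsilon,k-1}\}$. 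For $f\perp V_{\varepsilon,k-1}$ with $\|f\|_{L^2(\Omega)}=1$ one writes $\langle T_0(f),f\rangle=\langle T_\varepsilon(f),f\rangle+\langle(T_0-T_\varepsilon)f,f\rangle$ and bounds the difference term by the stated maximum, then takes the maximum over such $f$. Combining the two one-sided estimates yields $(4.7)$. Note that the validity of $(4.2)$, $(4.5)$, $(4.6)$ requires $T_\varepsilon$ and $T_0$ to be bounded, positive, self-adjoint, compact operators on $L^2(\Omega)$, which holds under the standing hypotheses since $\mathcal{M}(W\chi_w)>-\lambda_{0,1}'$ guarantees the coercivity $(2.21)$ and $(2.23)$, hence $T_\varepsilon,T_0$ are well-defined and bounded on $L^2(\Omega)$, while $a_{ij}=a_{ji}$ gives self-adjointness and the compact embedding $H^1_0(\Omega)\hookrightarrow L^2(\Omega)$ gives compactness.

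The argument is essentially soft and the only thing to be careful about is bookkeeping: making sure the right constraint set ($\perp V_{0,k-1}$ versus $\perp V_{\varepsilon,k-1}$) is paired with the right characterization of the eigenvalue in each of the two directions, and that the subspace fed into the min-max formula $(4.2)$ is exactly the span of the first $k-1$ eigenfunctions of the \emph{other} operator. I do not expect a genuine obstacle here; this is the classical Courant–Fischer perturbation lemma (cf.\ \cite{kenig2013estimates}), and the main content of the section — estimating $\langle(T_\varepsilon-T_0)f,f\rangle$ by $C\varepsilon$ using the $H^1$ convergence rate of Theorem 3.4 — is carried out afterward, not in this lemma.
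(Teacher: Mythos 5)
Your proposal is correct and is essentially identical to the paper's proof: both directions are obtained by feeding the span of the first $k-1$ eigenfunctions of the \emph{other} operator into the min-max characterization $(4.2)$, splitting $\langle T_\varepsilon f,f\rangle=\langle T_0 f,f\rangle+\langle(T_\varepsilon-T_0)f,f\rangle$, and invoking $(4.5)$ for the exact maximum. The only discrepancies are trivial equation-number mismatches (e.g.\ the characterization you cite as $(4.6)$ is the paper's $(4.5)$), not mathematical ones.
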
 for any $\varepsilon_1\geq\varepsilon>0$.

\begin{proof} It follows from $(4.2)$ that
\begin{equation}\begin{aligned}
\mu_{\varepsilon, k} \leq& \max _{\small{f \perp V_{0, k-1}}\atop \small{\|f\|_{L^{2}(\Omega)}=1}}\left\langle T_{\varepsilon}(f), f\right\rangle \leq \max _{\small{f \perp V_{0, k-1}}\atop \small{\|f\|_{L^{2}(\Omega)}=1}}\left\langle\left(T_{\varepsilon}-T_{0}\right)(f), f\right\rangle+\max _{\small{f \perp V_{0, k-1}}\atop \small{\|f\|_{L^{2}(\Omega)}=1}}\left\langle T_{0}(f), f\right\rangle \\
=& \max _{\small{f \perp V_{0, k-1}}\atop \small{\|f\|_{L^{2}(\Omega)}=1}}\left\langle\left(T_{\varepsilon}-T_{0}\right)(f), f\right\rangle+\mu_{0, k} ,
\end{aligned}\end{equation}
where we have used $(4.5)$ for $\varepsilon=0$. Therefore
\begin{equation}\mu_{\varepsilon, k}-\mu_{0, k} \leq \max _{\small{f \perp V_{0, k-1}} \atop \small{\|f\|_{L^{2}{(\Omega)}=1}}}\left\langle\left(T_{\varepsilon}-T_{0}\right)(f), f\right\rangle.\end{equation}

Similarly,
\begin{equation}\mu_{0, k}-\mu_{\varepsilon, k} \leq \max _{\small{f \perp V_{\varepsilon, k-1} }\atop \small{\|f\|_{L^{2}{(\Omega)}=1}}}\left\langle\left(T_{0}-T_{\varepsilon}\right)(f), f\right\rangle.\end{equation}
Consequently, the desired estimate $(4.6)$ follows readily from $(4.8)$ and $(4.9)$.
\end{proof}
It follows from $(4.6)$ that
\begin{equation}\left|\mu_{\varepsilon, k}-\mu_{0, k}\right| \leq\left\|T_{\varepsilon}-T_{0}\right\|_{L^{2} \rightarrow L^{2}}.\end{equation}
Under the assumptions in Theorem 1.4, we have $||u_\varepsilon-u_0||_{L^2(\Omega)}\leq C \varepsilon \|f\|_{L^2(\Omega)}$, where $C$ depends on $A$, $W$, $d$ and $\Omega$. Therefore, $||T_{\varepsilon}-T_{0}||_{L^{2} \rightarrow L^{2}}\leq C \varepsilon $, which implies that $|\mu_{\varepsilon,k}-\mu_{0,k}|\leq C \varepsilon$. Then
\begin{equation}\left|\lambda_{\varepsilon, k}-\lambda_{0, k}\right| \leq C \varepsilon \lambda_{0, k} \lambda_{\varepsilon, k}\leq C \varepsilon \lambda_{0, k} ^2,\end{equation}
where $C$ is independent of $\varepsilon$ and $k$. Note that the proof of $(4.11)$ relies on the convergence rates in $L^2$. The convergence estimates of $H^1_0$ in Theorem 1.3 allows us to improve the estimates by a factor of $\lambda_{0,k}^{1/2}$.\\

\textbf{Proof of Theorem 1.4}  We will use Lemma 4.1 and Theorem 1.3 to show that
\begin{equation}\left|\mu_{\varepsilon, k}-\mu_{0, k}\right| \leq C \varepsilon\left(\mu_{0, k}\right)^{1 / 2},\end{equation}
where $C$ is independent of $\varepsilon$ and $k$. Since $\lambda_{\varepsilon, k}=(\mu_{\varepsilon, k})^{-1}$ for $ \varepsilon_1\geq\varepsilon\geq 0$ and $\lambda_{\varepsilon, k}\approx \lambda_{0, k}$, this gives the desired estimate.

Let $u_\varepsilon=T_\varepsilon(f)$ and $u_0=T_0(f)$, where $||f||_{L^2(\Omega)}=1$ and $f\perp V_{0,k-1}$. In view of $(4.4)$ and $(4.5)$, we have $\langle u_0,f\rangle\leq\mu_{0,k}$. Hence, it follows from $(2.22)$ that
\begin{equation}c\left\|\nabla u_{0}\right\|_{L^{2}(\Omega)}^{2} \leq\left\langle u_{0}, f\right\rangle \leq \mu_{0, k},\end{equation}
where $c>0$ depends only on $\mathcal{M}(W\chi)$ and $\lambda_1'$. It follows that
\begin{equation}\|f\|_{H^{-1}(\Omega)} \leq C\left\|\nabla u_{0}\right\|_{L^{2}(\Omega)} +C||u_0||_{H^{-1}(\Omega)} \leq C\left(\mu_{0, k}\right)^{1 / 2}.\end{equation}

Then, \begin{equation}\begin{aligned}
\left\langle u_{\varepsilon}-u_{0}, f\right\rangle=&\left\langle u_{\varepsilon}-u_{0}-\left\{\Phi_{\varepsilon, \ell}-x_{\ell}\right\} \frac{\partial u_{0}}{\partial x_{\ell}}-\varepsilon\chi_w^\varepsilon u_0, f\right\rangle\\
&+\left\langle\left\{\Phi_{\varepsilon, \ell}-x_{\ell}\right\} \frac{\partial u_{0}}{\partial x_{\ell}}+\varepsilon\chi_w^\varepsilon u_0, f\right\rangle.\end{aligned}\end{equation}

This implies that for any $f\perp V_{0,k-1}$ with $\|f\|_{L^2(\Omega)}=1$,
\begin{equation}\begin{aligned}
\left|\left\langle u_{\varepsilon}-u_{0}, f\right\rangle\right| \leq &\left\|u_{\varepsilon}-u_{0}-\left\{\Phi_{\varepsilon, \ell}-x_{\ell}\right\} \frac{\partial u_{0}}{\partial x_{\ell}}-\varepsilon\chi_w^\varepsilon u_0\right\|_{H_{0}^{1}(\Omega)}\|f\|_{H^{-1}(\Omega)} \\
&+\left\|\left\{\Phi_{\varepsilon, \ell}-x_{\ell}\right\} \frac{\partial u_{0}}{\partial x_{\ell}}+\varepsilon\chi_w^\varepsilon u_0\right\|_{L^{2}(\Omega)}\|f\|_{L^{2}(\Omega)} \\
\leq & C \varepsilon\|f\|_{L^{2}(\Omega)}\|f\|_{H^{-1}(\Omega)}+C \varepsilon\left\|\nabla u_{0}\right\|_{L^{2}(\Omega)}\|f\|_{L^{2}(\Omega)} \\
\leq & C \varepsilon\left\|\nabla u_{0}\right\|_{L^{2}(\Omega)} \leq C \varepsilon\left(\mu_{0, k}\right)^{1 / 2},
\end{aligned}\end{equation}
where we have used $(3.14)$ in the second inequality.

Next we consider the case $||f||_{L^2(\Omega)}=1$ and $f\perp V_{\varepsilon,k-1}$.
In view of $(4.5)$, we have $\langle u_\varepsilon,f\rangle\leq\mu_{\varepsilon,k}$. Hence, it follows from $(2.21)$ that
$c\left\|\nabla u_{\varepsilon}\right\|_{L^{2}(\Omega)}^{2} \leq\left\langle u_{\varepsilon}, f\right\rangle \leq \mu_{\varepsilon, k}$.
 It follows from $(3.6)$ that
\begin{equation}\begin{aligned}
\|f\|_{H^{-1}(\Omega)}& \leq C\left\|\nabla u_{\varepsilon}\right\|_{L^{2}(\Omega)}+C||\frac{1}{\varepsilon}W^\varepsilon u_\varepsilon||_{H^{-1}(\Omega)}\\
&\leq C\left\|\nabla u_{\varepsilon}\right\|_{L^{2}(\Omega)} +C||\nabla_x(u_\varepsilon\cdot\nabla_y\psi_3^\varepsilon)||_{H^{-1}(\Omega)}+C||\nabla_y\psi_3^\varepsilon \cdot\nabla_xu_\varepsilon||_{H^{-1}(\Omega)}\\
 &\leq C\left\|\nabla u_{\varepsilon}\right\|_{L^{2}(\Omega)}  \leq C\left(\mu_{\varepsilon, k}\right)^{1 / 2}
\end{aligned}\end{equation} and it follows from $(2.22)$ that
\begin{equation}
\left\|\nabla u_{0}\right\|_{L^{2}(\Omega)} \leq C\|f\|_{H^{-1}(\Omega)} \leq C\left(\mu_{\varepsilon, k}\right)^{1 / 2},
\end{equation}where $C$ depends only on $A$, $W$ and $d$ and we have used . As before, this implies that for any $f\perp V_{\varepsilon,k-1}$ with
$||f||_{L^2(\Omega)}=1$,
\begin{equation}\begin{aligned}
\left|\left\langle u_{\varepsilon}-u_{0}, f\right\rangle\right| \leq &\left\|u_{\varepsilon}-u_{0}-\left\{\Phi_{\varepsilon, \ell}-x_{\ell}\right\} \frac{\partial u_{0}}{\partial x_{\ell}}-\varepsilon\chi_w^\varepsilon u_0\right\|_{H_{0}^{1}(\Omega)}\|f\|_{H^{-1}(\Omega)} \\
&+\left\|\left\{\Phi_{\varepsilon, \ell}-x_{\ell}\right\} \frac{\partial u_{0}}{\partial x_{\ell}}+\varepsilon\chi_w^\varepsilon u_0\right\|_{L^{2}(\Omega)}\|f\|_{L^{2}(\Omega)} \\
\leq & C \varepsilon\|f\|_{L^{2}(\Omega)}\|f\|_{H^{-1}(\Omega)}+C \varepsilon\left\|\nabla u_{0}\right\|_{L^{2}(\Omega)}\|f\|_{L^{2}(\Omega)} \\
\leq & C \varepsilon\left(\mu_{\varepsilon, k}\right)^{1 / 2} \leq C \varepsilon\left(\mu_{0, k}\right)^{1 / 2},
\end{aligned}\end{equation}
where we have used the fact that $\mu_{\varepsilon,k}\approx \mu_{0,k}$. In view of Lemma 4.1, the estimates $(4.2)$ follows from $(4.16)$ and $(4.19)$. \qed
\section{Conormal derivatives of Dirichlet eigenfunctions}
Throughout this section we always assume that $A$ satisfies $(1.2)$ and $(1.3)$, and $A$ is Lipschitz continuous as well as $\mathcal{M}(W\chi_w)>-\lambda_{0,1}'$. Let $\lambda\geq 1$ and $S_{\varepsilon,\lambda}(f)$ be defined by $(1.16)$ with $0<\varepsilon\leq \varepsilon_1$. Note that
\begin{equation}\mathcal{L}_{\varepsilon}\left(S_{\varepsilon, \lambda}(f)\right)=\lambda S_{\varepsilon, \lambda}(f)+R_{\varepsilon, \lambda}(f),\end{equation}
where \begin{equation}R_{\varepsilon, \lambda}(f)(x)=\sum_{\sqrt{\lambda_{\varepsilon, k}} \in{[\sqrt{\lambda}, \sqrt{\lambda}+1)}}\left(\lambda_{\varepsilon, k}-\lambda\right) \varphi_{\varepsilon, k}(f).\end{equation}

It is easy to see that
\begin{equation}\begin{aligned}
\left\| S_{\varepsilon, \lambda}(f)\right\|_{L^{2}(\Omega)} & \leq  \|f\|_{L^{2}(\Omega)}, \\
\left\|\nabla S_{\varepsilon, \lambda}(f)\right\|_{L^{2}(\Omega)} & \leq C \sqrt{\lambda}\|f\|_{L^{2}(\Omega)}, \\
\left\|R_{\varepsilon, \lambda}(f)\right\|_{L^{2}(\Omega)} & \leq C \sqrt{\lambda}\|f\|_{L^{2}(\Omega)}, \\
\left\|\nabla R_{\varepsilon, \lambda}(f)\right\|_{L^{2}(\Omega)} & \leq C \lambda\|f\|_{L^{2}(\Omega)},
\end{aligned}\end{equation} where $C$ depends only on the ellipticity constant of $A$, $\mathcal{M}(W\chi_w)$ and $\lambda_{0,1}'$.

\begin{lemma}
Suppose that $A$ satisfies the conditions $(1.2)$ and $(1.3)$. Also assume that $A$ is Lipschitz continuous. Let $u_\varepsilon \in H^2(\Omega)$ be a solution of $\mathcal{L}_\varepsilon(u_\varepsilon)=f$ in $\Omega$ for some $f\in L^2(\Omega)$, where $\Omega$ is a bounded Lipschitz domain. Then
\begin{equation}\begin{aligned}
\int_{\partial \Omega} n_{k} h_{k} a_{i j}^\varepsilon \frac{\partial u_{\varepsilon}}{\partial x_{i}} \cdot \frac{\partial u_{\varepsilon}}{\partial x_{j}} d \sigma=2 & \int_{\partial \Omega} h_{k}\left\{n_{k} \frac{\partial}{\partial x_{i}}-n_{i} \frac{\partial}{\partial x_{k}}\right\} u_{\varepsilon} \cdot a_{i j}^\varepsilon \frac{\partial u_{\varepsilon}}{\partial x_{j}} d \sigma \\
&-\int_{\Omega} \operatorname{div}(h) a_{i j}^\varepsilon \frac{\partial u_{\varepsilon}}{\partial x_{i}} \cdot \frac{\partial u_{\varepsilon}}{\partial x_{j}} d x \\
&-\int_{\Omega} h_{k} \frac{\partial}{\partial x_{k}}\left\{a_{i j}^\varepsilon\right\} \frac{\partial u_{\varepsilon}}{\partial x_{i}} \cdot \frac{\partial u_{\varepsilon}}{\partial x_{j}} d x \\
&+2 \int_{\Omega} \frac{\partial h_{k}}{\partial x_{i}} \cdot a_{i j}^\varepsilon \frac{\partial u_{\varepsilon}}{\partial x_{k}} \cdot \frac{\partial u_{\varepsilon}}{\partial x_{j}} d x \\
&-2 \int_{\Omega}\left( f-\frac{1}{\varepsilon}W^\varepsilon u_\varepsilon\right) \cdot \frac{\partial u_{\varepsilon}}{\partial x_{k}} \cdot h_{k} d x,
\end{aligned}\end{equation}where $h=(h_1,\cdots,h_d)\in C_0^1(\mathbb{R}^d)$ and $n$ denotes the unit outward normal to $\partial\Omega$.
\end{lemma}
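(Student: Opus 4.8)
The plan is to establish this Rellich--Pohozaev identity by the classical device of multiplying the equation by $h_k\,\partial u_\varepsilon/\partial x_k$ and integrating by parts, using the symmetry $A^*=A$ to turn the second-order terms into a pure divergence up to lower-order corrections. Since $u_\varepsilon\in H^2(\Omega)$, $A$ is Lipschitz and $\Omega$ is a bounded Lipschitz domain, every integral below is absolutely convergent (in particular $\nabla u_\varepsilon$ has an $L^2(\partial\Omega)$ trace and $a_{ij}^\varepsilon\,\partial_i u_\varepsilon\,\partial_j u_\varepsilon\in W^{1,1}(\Omega)$), so it suffices to prove the identity for smooth $u$ on a smooth domain and then pass to the limit by density.

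First I would rewrite $\mathcal{L}_\varepsilon u_\varepsilon=f$ as $-\partial_i(a_{ij}^\varepsilon\partial_j u_\varepsilon)=f-\tfrac{1}{\varepsilon}W^\varepsilon u_\varepsilon=:g_\varepsilon$, multiply by $h_k\,\partial_k u_\varepsilon$ and integrate over $\Omega$. One integration by parts yields
\[
-\int_{\partial\Omega}n_i\,a_{ij}^\varepsilon\,\partial_j u_\varepsilon\,h_k\,\partial_k u_\varepsilon\,d\sigma
+\int_{\Omega}a_{ij}^\varepsilon\,\partial_j u_\varepsilon\,\partial_i\!\left(h_k\,\partial_k u_\varepsilon\right)dx
=\int_{\Omega}g_\varepsilon\,h_k\,\partial_k u_\varepsilon\,dx .
\]
Then I would expand $\partial_i(h_k\,\partial_k u_\varepsilon)=(\partial_i h_k)\,\partial_k u_\varepsilon+h_k\,\partial^2_{ik}u_\varepsilon$; the first piece already produces one of the volume terms in the statement. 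For the second piece the symmetry $a_{ij}=a_{ji}$ gives
\[
a_{ij}^\varepsilon\,\partial_j u_\varepsilon\,\partial^2_{ik}u_\varepsilon
=\tfrac{1}{2}\,\partial_k\!\left(a_{ij}^\varepsilon\,\partial_i u_\varepsilon\,\partial_j u_\varepsilon\right)
-\tfrac{1}{2}\left(\partial_k a_{ij}^\varepsilon\right)\partial_i u_\varepsilon\,\partial_j u_\varepsilon ,
\]
and integrating $\tfrac12\int_\Omega h_k\,\partial_k(a_{ij}^\varepsilon\partial_i u_\varepsilon\partial_j u_\varepsilon)\,dx$ by parts produces the boundary term $\tfrac12\int_{\partial\Omega}n_k h_k\,a_{ij}^\varepsilon\partial_i u_\varepsilon\partial_j u_\varepsilon\,d\sigma$ together with the volume term $-\tfrac12\int_\Omega\operatorname{div}(h)\,a_{ij}^\varepsilon\partial_i u_\varepsilon\partial_j u_\varepsilon\,dx$, while the lower-order correction accounts for the remaining volume term involving $h_k\,\partial_k a_{ij}^\varepsilon$.

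Collecting everything, solving for $\tfrac12\int_{\partial\Omega}n_k h_k\,a_{ij}^\varepsilon\partial_i u_\varepsilon\partial_j u_\varepsilon\,d\sigma$ and multiplying by $2$ already yields the claimed identity, except that its boundary data appear in the form $2\int_{\partial\Omega}n_i a_{ij}^\varepsilon\partial_j u_\varepsilon\,h_k\partial_k u_\varepsilon\,d\sigma$. A final cosmetic step uses
\[
2\int_{\partial\Omega}n_k h_k\,a_{ij}^\varepsilon\partial_i u_\varepsilon\partial_j u_\varepsilon\,d\sigma
-2\int_{\partial\Omega}n_i a_{ij}^\varepsilon\partial_j u_\varepsilon\,h_k\partial_k u_\varepsilon\,d\sigma
=2\int_{\partial\Omega}h_k\left\{n_k\frac{\partial}{\partial x_i}-n_i\frac{\partial}{\partial x_k}\right\}u_\varepsilon\cdot a_{ij}^\varepsilon\frac{\partial u_\varepsilon}{\partial x_j}\,d\sigma
\]
to put the boundary contribution in the stated form. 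No step is conceptually hard; the part needing the most care is the bookkeeping of signs when isolating the boundary integral, and---at a lower level---the density argument legitimizing the integrations by parts at $H^2$/Lipschitz regularity. I note for later use that, choosing $h\in C^1_0(\mathbb{R}^d)$ to be an extension of the outward unit normal and using that a Dirichlet eigenfunction $u_\varepsilon$ vanishes on $\partial\Omega$ (so that $\nabla u_\varepsilon=(\partial u_\varepsilon/\partial n)\,n$ there), one has $\{n_k\partial_i-n_i\partial_k\}u_\varepsilon=0$ on $\partial\Omega$, the first boundary term on the right drops out, and the left side reduces to $\int_{\partial\Omega}(h\cdot n)\,a_{ij}^\varepsilon n_i n_j\,|\partial u_\varepsilon/\partial n|^2\,d\sigma$.
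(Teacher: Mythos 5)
Your proposal is correct, and it is exactly the standard Rellich-identity argument (multiply by $h_k\partial_k u_\varepsilon$, integrate by parts, use $A^*=A$ to write $a_{ij}^\varepsilon\partial_j u_\varepsilon\,\partial^2_{ik}u_\varepsilon$ as half a $k$-derivative plus a commutator term) that the paper itself does not write out but merely attributes to the literature, citing the constant-coefficient case in Fabes--Kenig--Verchota; the sign bookkeeping and the conversion of the boundary data into the tangential-derivative form $h_k\{n_k\partial_i-n_i\partial_k\}u_\varepsilon$ both check out. The only point to keep in mind is the one you already flag: at $H^2(\Omega)$/Lipschitz regularity the integrations by parts need a density or approximation justification, which is also implicit in the cited references.
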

\begin{proof}
The similar proof may be found in many other works by using the divergence theorem and the assumption $A^*=A$. We refer the reader to \cite{fabes1988layer} for the case of constant coefficients.
\end{proof}

\begin{lemma}
Suppose that $A$ satisfies the conditions $(1.2)$ and $(1.3)$. Also assume that $A$ is Lipschitz continuous and $\mathcal{M}(W\chi_w)>-\lambda_{0,1}'$,
and let $\Omega$ be a bounded $C^{1,1}$ domain. Let $u_\varepsilon=S_{\varepsilon,\lambda}(f)$ be defined by $(1.16)$ with $0<\varepsilon\leq
\varepsilon_1$, where $f\in L^2(\Omega)$ and $||f||_{L^2(\Omega)}=1$. Suppose that $u_\varepsilon\in H^2(\Omega)$. Then
\begin{equation}\int_{\partial \Omega}\left|\nabla u_{\varepsilon}\right|^{2} d \sigma \leq C \lambda+\frac{C}{\varepsilon} \int_{\Omega_{\varepsilon}}\left|\nabla u_{\varepsilon}\right|^{2} d x,\end{equation}
where $\Omega_\varepsilon=\{x\in \Omega :\text{ dist}(x,\partial\Omega)<\varepsilon\}$ and $C$ depends only on $A$, $W$ and $\Omega$.
\end{lemma}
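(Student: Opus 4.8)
The plan is to run the Rellich--Pohozaev identity of Lemma 5.1 on $u_\varepsilon=S_{\varepsilon,\lambda}(f)$, carefully accounting for the two features absent from \cite{kenig2013estimates}: the singular potential $\varepsilon^{-1}W^\varepsilon u_\varepsilon$ and the fact that $u_\varepsilon$ is a spectral cluster rather than a single solution. By $(5.1)$ one has $\mathcal{L}_\varepsilon u_\varepsilon=\lambda u_\varepsilon+R_{\varepsilon,\lambda}(f)$, so I would apply Lemma 5.1 with right-hand side $\lambda u_\varepsilon+R_{\varepsilon,\lambda}(f)$; then the ``source'' $f-\varepsilon^{-1}W^\varepsilon u_\varepsilon$ in the last integral of $(5.4)$ equals $\lambda u_\varepsilon+R_{\varepsilon,\lambda}(f)-\varepsilon^{-1}W^\varepsilon u_\varepsilon$. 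For the vector field I would take $h=h_0\eta_\varepsilon$, where $h_0\in C^1_0(\mathbb{R}^d;\mathbb{R}^d)$ is $\varepsilon$-independent with $\langle h_0,n\rangle\ge c_0>0$ on $\partial\Omega$ and $|h_0|+|\nabla h_0|\le C$ (available since $\Omega$ is $C^{1,1}$, e.g.\ a smooth extension of the outer unit normal), and $\eta_\varepsilon\in C^\infty_0(\Omega)$ is a cut-off with $\eta_\varepsilon\equiv1$ in a one-sided neighbourhood of $\partial\Omega$, supported in $\Omega_\varepsilon$, and $|\nabla\eta_\varepsilon|\le C/\varepsilon$; thus $\langle h,n\rangle=\langle h_0,n\rangle$ on $\partial\Omega$ and $h$ lives in $\Omega_\varepsilon$.

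With this choice the left-hand side of $(5.4)$ is controlled from below: since $u_\varepsilon=0$ on $\partial\Omega$ gives $\nabla u_\varepsilon=(\partial u_\varepsilon/\partial n)\,n$ there, we get $n_kh_k a_{ij}^\varepsilon\partial_iu_\varepsilon\partial_ju_\varepsilon=\langle h_0,n\rangle\,a_{ij}^\varepsilon n_in_j|\partial u_\varepsilon/\partial n|^2\ge c_0\kappa\,|\nabla u_\varepsilon|^2$, so $\int_{\partial\Omega}|\nabla u_\varepsilon|^2\,d\sigma\lesssim\int_{\partial\Omega}n_kh_k a^\varepsilon\partial_iu_\varepsilon\partial_ju_\varepsilon\,d\sigma$; moreover the tangential boundary integral in $(5.4)$ vanishes, because $\{n_k\partial_i-n_i\partial_k\}u_\varepsilon=n_in_k(\partial u_\varepsilon/\partial n)-n_in_k(\partial u_\varepsilon/\partial n)=0$ on $\partial\Omega$. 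It then remains to bound the four volume integrals on the right of $(5.4)$. The three carrying $\operatorname{div}h$, $\nabla h$, or $h_k\partial_k\{a_{ij}^\varepsilon\}$ are supported in $\Omega_\varepsilon$ with coefficients $\le C/\varepsilon$ there --- for the first two because $|\nabla\eta_\varepsilon|\le C/\varepsilon$, and for the third because $h_k\partial_k\{a_{ij}^\varepsilon\}=\varepsilon^{-1}h_k(\partial_k a_{ij})(x/\varepsilon)$ with $A$ Lipschitz --- so each is $\le\frac C\varepsilon\int_{\Omega_\varepsilon}|\nabla u_\varepsilon|^2\,dx$.

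For the source integral $-2\int_\Omega(\lambda u_\varepsilon+R_{\varepsilon,\lambda}(f)-\varepsilon^{-1}W^\varepsilon u_\varepsilon)\,\partial_ku_\varepsilon\,h_k$, the $\lambda u_\varepsilon$ and $R_{\varepsilon,\lambda}(f)$ contributions I would estimate using $\|u_\varepsilon\|_{L^2(\Omega)}\le1$ and $(5.3)$ (so $\|\nabla u_\varepsilon\|_{L^2(\Omega)}+\|R_{\varepsilon,\lambda}(f)\|_{L^2(\Omega)}\le C\sqrt\lambda$), together with $u_\varepsilon\partial_ku_\varepsilon=\tfrac12\partial_k(u_\varepsilon^2)$, one integration by parts (its boundary term vanishing since $u_\varepsilon=0$ on $\partial\Omega$) and the thin-shell Poincar\'e inequality $\int_{\Omega_\varepsilon}u_\varepsilon^2\le C\varepsilon^2\int_{\Omega_\varepsilon}|\nabla u_\varepsilon|^2$, valid because $u_\varepsilon$ vanishes on $\partial\Omega$; this produces a bound $C\lambda+\frac C\varepsilon\int_{\Omega_\varepsilon}|\nabla u_\varepsilon|^2$.

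The remaining piece, $\frac2\varepsilon\int_\Omega W^\varepsilon u_\varepsilon\,\partial_ku_\varepsilon\,h_k$, is the main obstacle and the genuinely new point. Here I would invoke $(3.6)$ and Remark 1.2: with $\psi_3\in H^2_{\mathrm{per}}(Y)\cap W^{2,p}(Y)$ solving $\Delta_y\psi_3=W$ one has $\nabla_y\psi_3\in L^\infty$ and $\varepsilon^{-1}W^\varepsilon u_\varepsilon=\operatorname{div}_x(u_\varepsilon\nabla_y\psi_3^\varepsilon)-\nabla_y\psi_3^\varepsilon\cdot\nabla_x u_\varepsilon$. The contribution of $\nabla_y\psi_3^\varepsilon\cdot\nabla_x u_\varepsilon$ is $\le C\int_{\Omega_\varepsilon}|\nabla u_\varepsilon|^2$, while integrating by parts in the $\operatorname{div}_x$-term (again with vanishing boundary term) yields a benign term with $\nabla h$ --- handled by Cauchy--Schwarz, $|\nabla h|\le C/\varepsilon$ and the thin-shell Poincar\'e inequality --- and the term $-\int_\Omega u_\varepsilon\,(\partial_{y_m}\psi_3)^\varepsilon\,h_k\,\partial^2_{mk}u_\varepsilon$, which carries a second derivative of $u_\varepsilon$ and, one checks, cannot be shed by further integrations by parts (they close up tautologically). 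The crux is therefore to control this Hessian term by an $H^2$ bound for $u_\varepsilon$ on the boundary layer $\Omega_\varepsilon$: from $-\operatorname{div}(A^\varepsilon\nabla u_\varepsilon)=\lambda u_\varepsilon+R_{\varepsilon,\lambda}(f)-\varepsilon^{-1}W^\varepsilon u_\varepsilon$ with $u_\varepsilon=0$ on the $C^{1,1}$ boundary and $A$ Lipschitz, a localized second-order estimate gives $\|\nabla^2u_\varepsilon\|_{L^2(\Omega_\varepsilon)}\lesssim(\varepsilon^{-1}+\varepsilon\lambda)\|\nabla u_\varepsilon\|_{L^2(\Omega_{2\varepsilon})}+\sqrt\lambda$ (the $\varepsilon^{-1}$ coming from $\|\nabla A^\varepsilon\|_\infty\sim\varepsilon^{-1}$ and the Caccioppoli cut-off at scale $\varepsilon$, with $\varepsilon^{-1}\|u_\varepsilon\|_{L^2}$ absorbed via the shell Poincar\'e inequality); combined with $\|u_\varepsilon\|_{L^2(\Omega_\varepsilon)}\le C\varepsilon\|\nabla u_\varepsilon\|_{L^2(\Omega_\varepsilon)}$, $\|\nabla_y\psi_3\|_\infty\le C$ and Young's inequality, this once more contributes $C\lambda+\frac C\varepsilon\int_{\Omega_\varepsilon}|\nabla u_\varepsilon|^2$. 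Collecting everything in $(5.4)$ then gives $(5.5)$. The whole difficulty is thus concentrated in establishing and sharply exploiting this boundary-layer $H^2$ estimate against the singular $\varepsilon^{-1}$ weights --- precisely the ``$H^2$ estimate of $\varphi_\varepsilon$ in $\Omega_{c\varepsilon}$'' flagged in the introduction.
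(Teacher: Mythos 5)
Your proposal follows the paper's route almost step for step: the Rellich identity of Lemma 5.1 with an $\varepsilon$-scale vector field supported in $\Omega_{c\varepsilon}$, the observation that the tangential boundary term vanishes and that the $\operatorname{div}h$, $\nabla h$ and $h\cdot\nabla(A^\varepsilon)$ integrals each contribute $\frac{C}{\varepsilon}\int_{\Omega_\varepsilon}|\nabla u_\varepsilon|^2$, the rewriting $\varepsilon^{-1}W^\varepsilon u_\varepsilon=\operatorname{div}_x(u_\varepsilon\nabla_y\psi_3^\varepsilon)-\nabla_y\psi_3^\varepsilon\cdot\nabla u_\varepsilon$, and the reduction of the resulting Hessian term to a rescaled boundary-layer $W^{2,2}$ estimate combined with the thin-shell Poincar\'e inequality. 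That is exactly the paper's $(5.6)$, $(5.12)$--$(5.16)$.

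There is, however, one genuine gap: your treatment of the source term $2\lambda\int_\Omega u_\varepsilon\,\partial_k u_\varepsilon\,h_k\,dx$. After the integration by parts this is $\lambda\int_\Omega|u_\varepsilon|^2\operatorname{div}(h)\,dx$, bounded by $\frac{C\lambda}{\varepsilon}\int_{\Omega_{c\varepsilon}}|u_\varepsilon|^2\,dx$, and the forward thin-shell Poincar\'e inequality alone turns this into $C\varepsilon\lambda\int_{\Omega_{c\varepsilon}}|\nabla u_\varepsilon|^2\,dx$. This is \emph{not} dominated by $C\lambda+\frac{C}{\varepsilon}\int_{\Omega_\varepsilon}|\nabla u_\varepsilon|^2\,dx$ unless $\varepsilon^2\lambda\leq C$: when $\varepsilon^2\lambda$ is large, using only $\|\nabla u_\varepsilon\|_{L^2}^2\leq C\lambda$ leaves an uncontrolled term of size $\varepsilon\lambda^2$, which exceeds $\lambda(1+\varepsilon^{-1})$ and therefore would not even recover the ``easy'' case $\varepsilon^2\lambda\geq1$ of Theorem 1.5 from this lemma. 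The paper closes this by an additional Caccioppoli-type step, $(5.8)$--$(5.10)$: testing the identity $\lambda|u_\varepsilon|^2-a^\varepsilon_{ij}\partial_iu_\varepsilon\partial_ju_\varepsilon=(\lambda u_\varepsilon-\mathcal{L}_\varepsilon u_\varepsilon)u_\varepsilon-\partial_i\{u_\varepsilon a^\varepsilon_{ij}\partial_ju_\varepsilon\}+\varepsilon^{-1}W^\varepsilon u_\varepsilon^2$ against a cut-off $\varphi^2$ supported in $\Omega_{2c\varepsilon}$ to obtain the reverse bound $\lambda\int_{\Omega_{c\varepsilon}}|u_\varepsilon|^2\leq C\int_{\Omega_{2c\varepsilon}}|\nabla u_\varepsilon|^2+C\varepsilon\lambda$, which then yields $\frac{C\lambda}{\varepsilon}\int_{\Omega_{c\varepsilon}}|u_\varepsilon|^2\leq C\lambda+\frac{C}{\varepsilon}\int_{\Omega_{2c\varepsilon}}|\nabla u_\varepsilon|^2$ uniformly in $\lambda$ and $\varepsilon$. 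You need this (or an equivalent use of the equation to trade $\lambda|u_\varepsilon|^2$ for $|\nabla u_\varepsilon|^2$ on the boundary layer) to make the stated estimate hold for all $\lambda\geq1$ and $0<\varepsilon\leq\varepsilon_1$.
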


\begin{proof}
The similar proof can be found in \cite{kenig2013estimates}, in the case of $W=0$. Assume first that $\varepsilon\leq \text{diam}(\Omega)$.
In this case, we may choose the vector field $h\in C^1_0(\mathbb{R}^d)$ such that $n_kh_k\geq c>0$ on $\partial\Omega$, $|h|\leq 1$, $|\nabla h|\leq
C\varepsilon^{-1}$, and $h=0$ on $\{x\in \Omega:\text{ dist}(x,\partial\Omega)\geq c\varepsilon\}$, where $c=c(\Omega)>0$ is small. Note that
$\mathcal{L}_{\varepsilon}\left(S_{\varepsilon, \lambda}(f)\right)=\lambda S_{\varepsilon, \lambda}(f)+R_{\varepsilon, \lambda}(f)$ in $\Omega$. Since
$u_\varepsilon=0$ on $\partial\Omega$, it follows from $(5.4)$ that
\begin{equation}\begin{aligned}
c \int_{\partial \Omega}\left|\nabla u_{\varepsilon}\right|^{2} d \sigma \leq & \frac{C}{\varepsilon} \int_{\Omega_{\varepsilon}}\left|\nabla u_{\varepsilon}\right|^{2} d x-2 \lambda \int_{\Omega} u_{\varepsilon} \frac{\partial u_{\varepsilon} }{\partial x_{k}}h_{k} d x \\
&-2 \int_{\Omega}\left(R_{\varepsilon, \lambda}(f)\right)   \frac{\partial u_{\varepsilon} }{\partial x_{k}} h_{k} d x+2\int_\Omega \frac{1}{\varepsilon}
W^\varepsilon u_\varepsilon \frac{\partial u_{\varepsilon} }{\partial x_{k}} h_{k} d x.
\end{aligned}\end{equation}

Using the Cauchy inequality we may bound the third integral on the RHS of $(5.6)$ by $C||R_{\varepsilon,\lambda}(f)||_{L^2(\Omega)}||\nabla u_\varepsilon||_{L^2(\Omega)}$, which, in view of $(5.3)$, is dominated by $C\lambda$.
To handle the second term on the RHS of $(5.6)$, integration by parts yields
\begin{equation}\left|2 \lambda \int_{\Omega} u_{\varepsilon} \cdot \frac{\partial u_{\varepsilon}}{\partial x_{k}} \cdot h_{k} d x\right|=\left|\lambda \int_{\Omega}| u_{\varepsilon}|^{2} \operatorname{div}(h) d x\right|\leq \frac{C \lambda}{\varepsilon} \int_{\Omega_{c \varepsilon}}| u_{\varepsilon}|^{2} d x.\end{equation}
Since
\begin{equation}\begin{aligned}
\lambda\left|u_{\varepsilon}\right|^{2}-a_{i j}^\varepsilon \frac{\partial u_{\varepsilon}}{\partial x_{i}} \frac{\partial u_{\varepsilon}}{\partial x_{j}}
&=\left(\lambda u_{\varepsilon}-\mathcal{L}_{\varepsilon}\left(u_{\varepsilon}\right)\right) u_{\varepsilon}-\frac{\partial}{\partial x_{i}}\left\{u_{\varepsilon} a_{i j}^\varepsilon \frac{\partial u_{\varepsilon}}{\partial x_{j}}\right\}+\frac{1}{\varepsilon}W^\varepsilon u_\varepsilon^2\\
&=\left(\lambda u_{\varepsilon}-\mathcal{L}_{\varepsilon}\left(u_{\varepsilon}\right)\right) u_{\varepsilon}-\frac{\partial}{\partial x_{i}}\left\{u_{\varepsilon} a_{i j}^\varepsilon \frac{\partial u_{\varepsilon}}{\partial x_{j}}\right\}\\
&\quad+\nabla_x(u_\varepsilon^2\cdot\nabla_y\psi_3^\varepsilon)
-\nabla_y\psi_3^\varepsilon \cdot\nabla_xu_\varepsilon^2,
\end{aligned}\end{equation}where we have used $\Delta_y \psi_3(y)=W(y)$ in $\mathbb{R}^d$ in the above equation. Then, it follows that for any $\varphi\in C^1_0(\mathbb{R}^d)$,
\begin{equation}\begin{aligned}
\int_\Omega\left\{\lambda\left|u_{\varepsilon}\right|^{2}-a_{i j}^\varepsilon \frac{\partial u_{\varepsilon}}{\partial x_{i}} \frac{\partial u_{\varepsilon}}{\partial x_{j}}\right\}\varphi^2 dx=&\int_\Omega \left(\lambda u_{\varepsilon}-\mathcal{L}_{\varepsilon}\left(u_{\varepsilon}\right)\right) u_{\varepsilon}\varphi^2dx -2\int_\Omega u_\varepsilon^2\varphi \nabla_y\psi_3^\varepsilon\nabla \varphi dx\\
&\quad+2\int_\Omega  u_{\varepsilon}\varphi a_{i j}^\varepsilon \frac{\partial u_{\varepsilon}}{\partial x_{j}}\frac{\partial\varphi}{\partial_{x_i}}
-2\int_\Omega u_\varepsilon\varphi^2\nabla_y\psi_3^\varepsilon \cdot\nabla_xu_\varepsilon dx.
\end{aligned}\end{equation}
Choose $\varphi$ so that $0\leq \varphi\leq 1$, $\varphi(x)=1$ if $\text{dist}(x,\partial\Omega)\leq c\varepsilon$, $\varphi(x)=0$ if $\text{dist}(x,\partial\Omega)\geq 2c\varepsilon$, and $|\nabla \varphi|\leq C\varepsilon^{-1}$. In view of $(5.3)$ and $(5.9)$, then
\begin{equation}\begin{aligned}
&\lambda \int_{\Omega}\left|u_{\varepsilon}\right|^{2} \varphi^{2} d x \\
 \leq& C \int_{\Omega}\left|\nabla u_{\varepsilon}\right|^{2} \varphi^{2} d x+\int_{\Omega}\left|R_{\varepsilon,
 \lambda}(f)\right|\left|u_{\varepsilon}\right| \varphi^{2} d x+C \int_{\Omega}\left|u_{\varepsilon}\right|^{2}|\nabla \varphi|^{2} d x +C
 \int_{\Omega}\left|u_{\varepsilon}\right|^{2} \varphi^{2} d x\\
 \leq &C \int_{\Omega}\left|\nabla u_{\varepsilon}\right|^{2} \varphi^{2} d x+\left\|R_{\varepsilon,
 \lambda}(f)\right\|_{L^{2}(\Omega)}\left\|u_{\varepsilon}\right\|_{L^{2}\left(\Omega_{2 c \varepsilon}\right)}+\frac{C}{\varepsilon^{2}} \int_{\Omega_{2 c \varepsilon}}\left|u_{\varepsilon}\right|^{2} d x \\
 \leq& C \int_{\Omega_{2 c \varepsilon}}\left|\nabla u_{\varepsilon}\right|^{2} d x+C \varepsilon \lambda,
\end{aligned}\end{equation}
where we have used the Cauchy inequality, $\lambda\geq 1$, $||\nabla_y\psi_3||_{\infty}\leq C$, and the following inequality

\begin{equation}\int_{\Omega_{2 c \varepsilon}}\left|u_{\varepsilon}\right|^{2} d x \leq C \varepsilon^{2} \int_{\Omega_{2 c \varepsilon}}\left|\nabla u_{\varepsilon}\right|^{2} d x.\end{equation}
Thus, together with $(5.7)$, gives that the second term on the RHS of $(5.6)$ satisfies the desired estimate $(5.5)$. Now, we need to handle the fourth term on the RHS of $(5.6)$.
In view of $\Delta_y \psi_3(y)=W(y)$ in $\mathbb{R}^d$, then
\begin{equation}\begin{aligned}
-&\int_\Omega\frac{1}{\varepsilon}W^\varepsilon u_\varepsilon \partial_k u_\varepsilon h_kdx =-\int_\Omega\varepsilon (\Delta_x \psi_3^\varepsilon) u_\varepsilon \partial_k u_\varepsilon h_kdx\\
&=\int_\Omega\nabla_y \psi_3^\varepsilon \nabla_xu_\varepsilon \partial_k u_\varepsilon h_kdx+\int_\Omega\nabla_y \psi_3^\varepsilon u_\varepsilon \nabla_x\partial_k u_\varepsilon h_kdx+\int_\Omega\nabla_y \psi_3^\varepsilon u_\varepsilon \partial_k u_\varepsilon \nabla_xh_kdx,
\end{aligned}\end{equation} which gives

\begin{equation}\begin{aligned}
\left|\int_\Omega\frac{1}{\varepsilon}W^\varepsilon u_\varepsilon \partial_k u_\varepsilon h_kdx\right|\leq& C \int_{\Omega_{ c \varepsilon}}\left|\nabla u_{\varepsilon}\right|^{2} d x+C||u_\varepsilon||_{L^2(\Omega_{ c \varepsilon})}||\nabla^2 u_\varepsilon||_{L^2(\Omega_{ c \varepsilon})}\\
&+\frac{C}{\varepsilon}||u_\varepsilon||_{L^2(\Omega_{ c \varepsilon})}||\nabla u_\varepsilon||_{L^2(\Omega_{ c \varepsilon})}\\
\leq& C ||\nabla u_\varepsilon||_{L^2(\Omega_{ c \varepsilon})}^2+C||u_\varepsilon||_{L^2(\Omega_{ c \varepsilon})}||\nabla^2 u_\varepsilon||_{L^2(\Omega_{ c \varepsilon})},
\end{aligned}\end{equation}
where we have used $(5.11)$. In order to estimate $||\nabla^2 u_\varepsilon||_{L^2(\Omega_{ c \varepsilon})}$, we first give some notations. Let
$\tilde{\Omega}=\{x:\varepsilon x\in \Omega\}$, and $\tilde{\Omega}_c=\{x:\text{dist}(x,\partial \tilde{\Omega})<c\}$,  then it is easy to check that
$\varepsilon\tilde{\Omega}_c=\Omega_{c\varepsilon}$. If we let $v(x)=u_\varepsilon(\varepsilon x)$ and $\tilde{g}(x)=\varepsilon^2 (\lambda u_\varepsilon+R_{\varepsilon,\lambda}(f))(\varepsilon x)$, then $v$ satisfies

\begin{equation*}\left\{\begin{aligned}-\operatorname{div}(A\nabla v)+\varepsilon W v&=\tilde{g} \text{ in }\tilde{\Omega}\\
v&=0 \text{ on } \partial \tilde{\Omega}.\end{aligned}\right.
\end{equation*} Thus, according to the $W^{2,p}$ estimates with $\Omega$ being a bounded $C^{1,1}$ domain,
\begin{equation}
||\nabla^2 v||_{L^2(\tilde{\Omega}_c)}\leq C\left\{|| v||_{L^2(\tilde{\Omega}_{2c})}+||\nabla v||_{L^2(\tilde{\Omega}_{2c})}+|| \tilde{g}||_{L^2(\tilde{\Omega}_{2c})}\right\},
\end{equation} which, consequently, gives
\begin{equation}\begin{aligned}
||\nabla^2 u_\varepsilon||_{L^2({\Omega}_{c\varepsilon})}&\leq C\left\{\varepsilon^{-2}|| u_\varepsilon||_{L^2({\Omega}_{2c\varepsilon})}+\varepsilon^{-1}||\nabla u_\varepsilon||_{L^2({\Omega}_{2c\varepsilon})}+||\lambda u_\varepsilon+R_{\varepsilon,\lambda}(f) ||_{L^2(\tilde{\Omega}_{2c\varepsilon})}\right\}\\
&\leq C \lambda + C\varepsilon^{-1}||\nabla u_\varepsilon||_{L^2({\Omega}_{2c\varepsilon})}.
\end{aligned}\end{equation}
Combining $(5.13)$ and $(5.15)$ gives
\begin{equation}\left|\int_\Omega\frac{1}{\varepsilon}W^\varepsilon u_\varepsilon \partial_k u_\varepsilon h_kdx\right|\leq C \lambda + C||\nabla u_\varepsilon||^2_{L^2({\Omega}_{2c\varepsilon})}.\end{equation}
Consequently, the fourth term on the RHS of $(5.6)$ also satisfies the desired estimates $(5.5)$.

Finally, if $\varepsilon\geq \text{diam}(\Omega)$, we choose a vector field $h\in C^1_0(\mathbb{R}^d)$ such that $h_kn_k\geq c>0$ on $\partial\Omega$. The same argument as in $(5.6)$, $(5.7)$ and $(5.16)$ shows that the LHS of $(5.5)$ is bounded by $C\lambda$.

\end{proof}

\begin{thm}
Suppose that $A$ satisfies conditions $(1.2)$ and $(1.3)$. Also assume that $A$ is Lipschitz continuous and $\mathcal{M}(W\chi_w)>-\lambda_{0,1}'$. Let $\Omega$ be a bounded $C^{1,1}$ domain. Let $u_\varepsilon=S_{\varepsilon,\lambda}(f)$ be defined by $(1.16)$ with $0<\varepsilon \leq\varepsilon_1$, where $f\in L^2(\Omega)$ and $||f||_{L^2(\Omega)}=1$. Then

\begin{equation}\int_{\partial \Omega}\left|\nabla u_{\varepsilon}\right|^{2} d \sigma \leq\left\{\begin{array}{ll}
C \lambda\left(1+\varepsilon^{-1}\right) & \text {if } \varepsilon^{2} \lambda \geq 1, \\
C \lambda(1+\varepsilon \lambda) & \text { if } \varepsilon^{2} \lambda<1,
\end{array}\right.\end{equation} where $C$ depends only on $A$, $W$ and $\Omega$.
\end{thm}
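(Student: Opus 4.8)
The plan is to deduce the theorem from Lemma 5.2, which already reduces the matter to bounding the boundary-layer Dirichlet energy $\varepsilon^{-1}\int_{\Omega_\varepsilon}|\nabla u_\varepsilon|^2\,dx$, and then to split according to the size of $\varepsilon^2\lambda$. Write $F_\varepsilon=\mathcal{L}_\varepsilon u_\varepsilon=\lambda u_\varepsilon+R_{\varepsilon,\lambda}(f)$; by $(5.3)$ one has $\|F_\varepsilon\|_{L^2(\Omega)}\leq C\lambda$, while the identity $\tfrac1\varepsilon W^\varepsilon u_\varepsilon=\operatorname{div}_x(\nabla_y\psi_3^\varepsilon\,u_\varepsilon)-\nabla_y\psi_3^\varepsilon\cdot\nabla_x u_\varepsilon$ of Remark 1.2 together with $(5.3)$ gives $\|F_\varepsilon\|_{H^{-1}(\Omega)}\leq C\sqrt{\lambda}$. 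If $\varepsilon^2\lambda\geq 1$ we are done at once: by $(5.3)$, $\int_{\Omega_\varepsilon}|\nabla u_\varepsilon|^2\,dx\leq\|\nabla u_\varepsilon\|_{L^2(\Omega)}^2\leq C\lambda$, so Lemma 5.2 gives $\int_{\partial\Omega}|\nabla u_\varepsilon|^2\,d\sigma\leq C\lambda+C\varepsilon^{-1}\lambda=C\lambda(1+\varepsilon^{-1})$; the case $\varepsilon\geq\operatorname{diam}(\Omega)$ is similarly immediate. Hence the whole content lies in the regime $\varepsilon^2\lambda<1$, where it remains to prove that $\int_{\Omega_\varepsilon}|\nabla u_\varepsilon|^2\,dx\leq C\varepsilon\lambda+C\varepsilon^2\lambda^2$.

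To this end I compare $u_\varepsilon$ with the homogenized solution: let $v_\varepsilon\in H^2(\Omega)\cap H^1_0(\Omega)$ solve $\mathcal{L}_0 v_\varepsilon=F_\varepsilon$ in $\Omega$. By $(2.23)$--$(2.25)$, $\|\nabla v_\varepsilon\|_{L^2(\Omega)}\leq C\|F_\varepsilon\|_{H^{-1}}\leq C\sqrt{\lambda}$ and $\|v_\varepsilon\|_{H^2(\Omega)}\leq C\|F_\varepsilon\|_{L^2}\leq C\lambda$, and since $\mathcal{L}_\varepsilon u_\varepsilon=\mathcal{L}_0 v_\varepsilon$ in $\Omega$, Theorem 3.4 (with $f,u_0$ replaced by $F_\varepsilon,v_\varepsilon$) shows that $w_\varepsilon:=u_\varepsilon-v_\varepsilon-\{\Phi_{\varepsilon,\ell}-x_\ell\}\partial_\ell v_\varepsilon-\varepsilon\chi_w^\varepsilon v_\varepsilon\in H^1_0(\Omega)$ satisfies $\|w_\varepsilon\|_{H^1_0(\Omega)}\leq C\varepsilon\|F_\varepsilon\|_{L^2}\leq C\varepsilon\lambda$; moreover the $O(\varepsilon^2)$ refinement of the $L^2$ corrector estimate (duality argument of \S3, cf.\ the remark after $(3.37)$ and \cite{shen2018periodic}) gives $\|w_\varepsilon\|_{L^2(\Omega)}\leq C\varepsilon^2\|v_\varepsilon\|_{H^2}\leq C\varepsilon^2\lambda$, whence $\|u_\varepsilon-v_\varepsilon\|_{L^2(\Omega)}\leq C(\varepsilon\sqrt{\lambda}+\varepsilon^2\lambda)$ and $\|v_\varepsilon\|_{L^2(\Omega)}\leq\|u_\varepsilon\|_{L^2(\Omega)}+C(\varepsilon\sqrt{\lambda}+\varepsilon^2\lambda)\leq C$ in the regime $\varepsilon^2\lambda<1$. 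Differentiating the identity $u_\varepsilon=v_\varepsilon+\{\Phi_{\varepsilon,\ell}-x_\ell\}\partial_\ell v_\varepsilon+\varepsilon\chi_w^\varepsilon v_\varepsilon+w_\varepsilon$ and integrating $|\nabla u_\varepsilon|^2$ over $\Omega_\varepsilon$, I peel off $C\|\nabla w_\varepsilon\|_{L^2(\Omega)}^2\leq C\varepsilon^2\lambda^2$ and estimate the corrector terms using $\|\Phi_{\varepsilon,\ell}-x_\ell\|_{L^\infty(\Omega)}\leq C\varepsilon$ (Lemma 3.2), the boundedness of $\chi_w,\chi_\ell,\nabla_y\chi_w,\nabla_y\chi_\ell$ (here the Lipschitz hypothesis on $A$ is used), the Poincaré-type bound $\|v_\varepsilon\|_{L^2(\Omega_\varepsilon)}\leq C\varepsilon\|\nabla v_\varepsilon\|_{L^2(\Omega_\varepsilon)}$ (from $v_\varepsilon=0$ on $\partial\Omega$), and the Caccioppoli-type argument of $(3.18)$--$(3.22)$ applied to $h_\varepsilon=\Phi_{\varepsilon,\ell}-x_\ell-\varepsilon\chi_\ell^\varepsilon$ tested against a cutoff times $\partial_\ell v_\varepsilon$; this reduces the task to proving the boundary-layer bound $\int_{\Omega_\varepsilon}|\nabla v_\varepsilon|^2\,dx\leq C\varepsilon\lambda+C\varepsilon^2\lambda^2$ for the homogenized solution.

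Since $v_\varepsilon\in H^2(\Omega)$ vanishes on $\partial\Omega$, a fundamental-theorem-of-calculus estimate in boundary normal coordinates of the $C^{1,1}$ domain $\Omega$ gives
\[
\int_{\Omega_\varepsilon}|\nabla v_\varepsilon|^2\,dx\leq C\varepsilon\int_{\partial\Omega}|\nabla v_\varepsilon|^2\,d\sigma+C\varepsilon^2\|v_\varepsilon\|_{H^2(\Omega)}^2,
\]
so the boundary-layer bound follows once we show the trace estimate $\int_{\partial\Omega}|\nabla v_\varepsilon|^2\,d\sigma\leq C\lambda+C\varepsilon\lambda^2$. For this I apply the Rellich identity for the constant-coefficient operator $\mathcal{L}_0$ with a global vector field $h\in C^1_0(\mathbb{R}^d)$ satisfying $h\cdot n\geq c>0$ on $\partial\Omega$: since $v_\varepsilon=0$ on $\partial\Omega$ the tangential boundary terms drop out and one is left with $c\int_{\partial\Omega}|\nabla v_\varepsilon|^2\,d\sigma\leq C\|\nabla v_\varepsilon\|_{L^2(\Omega)}^2+2\bigl|\int_\Omega\bigl(F_\varepsilon-\mathcal{M}(W\chi_w)v_\varepsilon\bigr)h_k\partial_k v_\varepsilon\,dx\bigr|$. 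The potential piece is $O(\|v_\varepsilon\|_{L^2}^2)=O(1)$ after an integration by parts, and in the principal piece I write $F_\varepsilon=\lambda u_\varepsilon+R_{\varepsilon,\lambda}(f)$ and integrate by parts, using again $v_\varepsilon=0$ on $\partial\Omega$:
\[
\lambda\int_\Omega u_\varepsilon\,h_k\partial_k v_\varepsilon\,dx=-\frac{\lambda}{2}\int_\Omega\operatorname{div}(h)\,v_\varepsilon^2\,dx+\lambda\int_\Omega(u_\varepsilon-v_\varepsilon)\,h_k\partial_k v_\varepsilon\,dx,
\]
which is bounded by $C\lambda\|v_\varepsilon\|_{L^2(\Omega)}^2+C\lambda\|u_\varepsilon-v_\varepsilon\|_{L^2(\Omega)}\|\nabla v_\varepsilon\|_{L^2(\Omega)}\leq C\lambda+C\bigl(\varepsilon\lambda^2+\varepsilon^2\lambda^{5/2}\bigr)\leq C\lambda+C\varepsilon\lambda^2$, where $\varepsilon^2\lambda<1$ is used to absorb $\varepsilon^2\lambda^{5/2}\leq\varepsilon\lambda^2$; the $R_{\varepsilon,\lambda}(f)$ term contributes at most $C\|R_{\varepsilon,\lambda}(f)\|_{L^2}\|\nabla v_\varepsilon\|_{L^2}\leq C\lambda$ by $(5.3)$. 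This proves the trace estimate, hence the boundary-layer bound, hence $\int_{\Omega_\varepsilon}|\nabla u_\varepsilon|^2\,dx\leq C\varepsilon\lambda+C\varepsilon^2\lambda^2$, and Lemma 5.2 then yields $\int_{\partial\Omega}|\nabla u_\varepsilon|^2\,d\sigma\leq C\lambda+C\varepsilon^{-1}(\varepsilon\lambda+\varepsilon^2\lambda^2)=C\lambda(1+\varepsilon\lambda)$.

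The step I expect to be the main obstacle is the trace estimate $\int_{\partial\Omega}|\nabla v_\varepsilon|^2\,d\sigma\leq C\lambda+C\varepsilon\lambda^2$ with the sharp power of $\lambda$: a naive Rellich bound would only yield $C\|F_\varepsilon\|_{L^2}\|\nabla v_\varepsilon\|_{L^2}\sim\lambda^{3/2}$, which is far too large once $\varepsilon^2\lambda<1$, so one must genuinely exploit that $v_\varepsilon$ is $L^2$-close to the $L^2$-normalized function $u_\varepsilon$ (so that the $\lambda u_\varepsilon$ part of $F_\varepsilon$ collapses, after integration by parts, to $O(\lambda\|v_\varepsilon\|_{L^2}^2)$ plus a controllable error); this is where both the $O(\varepsilon^2)$ $L^2$-remainder estimate for $w_\varepsilon$ and the constraint $\varepsilon^2\lambda<1$ are indispensable. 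The accompanying bookkeeping of the corrector contributions in the $\Omega_\varepsilon$-decomposition is routine but must be done through the Caccioppoli/energy argument behind $(3.18)$ rather than through pointwise bounds on $\nabla\Phi_{\varepsilon,\ell}$, and the Lipschitz hypothesis on $A$ is precisely what keeps $\nabla_y\chi$ bounded there.
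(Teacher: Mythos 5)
Your overall architecture coincides with the paper's: reduce via Lemma 5.2 to the boundary-layer energy $\varepsilon^{-1}\int_{\Omega_\varepsilon}|\nabla u_\varepsilon|^2$, dispose of the case $\varepsilon^{2}\lambda\geq 1$ directly, introduce $v_\varepsilon$ with $\mathcal{L}_0 v_\varepsilon=\lambda u_\varepsilon+R_{\varepsilon,\lambda}(f)$, transfer the boundary layer to $v_\varepsilon$ via the $H^1$ estimate $(3.14)$ and the Fundamental Theorem of Calculus, and finish with a Rellich identity for $\mathcal{L}_0$. The gap lies in your proof of the trace estimate $\int_{\partial\Omega}|\nabla v_\varepsilon|^2\,d\sigma\leq C\lambda+C\varepsilon\lambda^2$. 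You bound the critical term by $\lambda\|u_\varepsilon-v_\varepsilon\|_{L^2}\|\nabla v_\varepsilon\|_{L^2}$ and then invoke an $O(\varepsilon^2)$ $L^2$-remainder estimate $\|w_\varepsilon\|_{L^2}\leq C\varepsilon^2\|v_\varepsilon\|_{H^2}$, attributing it to the duality argument of Section 3. But the remark after $(3.37)$ only claims the first-order estimate $u_\varepsilon=u_0+O(\varepsilon\|u_0\|_{H^2})$ in $L^2$; an $O(\varepsilon^2)$ bound on the full corrector error $w_\varepsilon$ is a genuinely higher-order statement that is proved nowhere in the paper and would require second-order correctors and boundary-layer analysis beyond what $C^{1,1}$ domains and $L^2$ data allow. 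Without it you only have $\|u_\varepsilon-v_\varepsilon\|_{L^2}\leq C\varepsilon\lambda$, and your bound becomes $C\varepsilon\lambda^{5/2}$, which is \emph{not} dominated by $C\lambda+C\varepsilon\lambda^2$ in the regime $\varepsilon^2\lambda<1$ (take $\lambda\sim\varepsilon^{-2}$). The same unproven estimate underlies your claims $\|v_\varepsilon\|_{L^2(\Omega)}\leq C$ and that the potential piece is $O(1)$.

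The paper circumvents this entirely by integrating by parts the other way in $(5.25)$--$(5.27)$: it converts $\lambda\int_\Omega u_\varepsilon h_k\partial_k v_\varepsilon$ into $\lambda\int_\Omega v_\varepsilon h_k\partial_k u_\varepsilon$ plus $\lambda\int_\Omega u_\varepsilon v_\varepsilon\operatorname{div}(h)$, controls the latter via $\|u_\varepsilon\|_{H^{-1}(\Omega)}\leq C\lambda^{-1/2}$ (a consequence of the spectral equation), and in the former writes $v_\varepsilon=u_\varepsilon-(u_\varepsilon-v_\varepsilon)$ and pairs the corrector error $w_\varepsilon$ against $(\nabla u_\varepsilon)h$ in the $H^1_0$--$H^{-1}$ duality, using $\|(\nabla u_\varepsilon)h\|_{H^{-1}(\Omega)}\leq C\|u_\varepsilon\|_{L^2(\Omega)}\leq C$; this costs only $\|w_\varepsilon\|_{H^1_0(\Omega)}\leq C\varepsilon\lambda$, i.e.\ exactly the first-order estimate of Theorem 3.4 that you already have. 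You should adopt this manoeuvre. A secondary point: the paper's bookkeeping of the corrector terms over $\Omega_\varepsilon$ in $(5.22)$ uses the pointwise bound $\|\nabla\Phi_\varepsilon\|_\infty\leq C$ from Avellaneda--Lin (available because $A$ is Lipschitz and $\Omega$ is $C^{1,1}$), contrary to your closing remark; the Caccioppoli-type argument behind $(3.18)$ yields only a global $\varepsilon^2\|\nabla(\cdot)\|_{L^2(\Omega)}^2$ bound and loses the localization to $\Omega_\varepsilon$ that is essential for the leading term $(\nabla\Phi_\varepsilon)\nabla v_\varepsilon$.
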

\begin{proof}
We may use Lemma 5.2 and reduce the problem to the estimate of $\varepsilon^{-1}||\nabla u_\varepsilon||_{L^2(\Omega_{\varepsilon})}^2$ on the RHS of $(5.5)$.
If $\varepsilon^2\lambda\geq 1$, the desired estimate $(5.17)$ follows directly from $||\nabla u_\varepsilon||_{L^2(\Omega_{\varepsilon})}^2\leq C\lambda$.

The proof for the case $\varepsilon^2\lambda<1$ uses the $H^1$ convergence estimate in Theorem 1.3. Let $v_\varepsilon$ be the unique solution in $H^1_0(\Omega)$ to the equation
\begin{equation}\mathcal{L}_{0}\left(v_{\varepsilon}\right)=\lambda u_{\varepsilon}+R_{\varepsilon, \lambda}(f) \quad \text { in }\Omega.\end{equation}

And observe that
 \begin{equation}\left\|\lambda u_{\varepsilon}+R_{\varepsilon, \lambda}(f)\right\|_{L^{2}(\Omega)} \leq C \lambda.\end{equation}
First, using $\mathcal{L}_0v_\varepsilon=\mathcal{L}_\varepsilon u_\varepsilon$ in $\Omega$, we may deduce that

\begin{equation*}\begin{aligned}
c||\nabla v_\varepsilon||_{L^{2}(\Omega)}^2&\leq \langle\mathcal{L}_{0}\left(v_{\varepsilon}\right), v_\varepsilon\rangle=\langle\mathcal{L}_\varepsilon u_\varepsilon,v_\varepsilon\rangle\\
&\leq C||\nabla v_\varepsilon||_{L^{2}(\Omega)}||\nabla u_\varepsilon||_{L^{2}(\Omega)}+\int_\Omega \frac{1}{\varepsilon}W^\varepsilon u_\varepsilon v_\varepsilon dx\\
&\leq C||\nabla v_\varepsilon||_{L^{2}(\Omega)}||\nabla u_\varepsilon||_{L^{2}(\Omega)}+\int_\Omega \nabla_x(\nabla_y \psi_3^\varepsilon) u_\varepsilon v_\varepsilon dx\\
&\leq C||\nabla v_\varepsilon||_{L^{2}(\Omega)}||\nabla u_\varepsilon||_{L^{2}(\Omega)},
\end{aligned}\end{equation*}
where we have used $\Delta_y \psi_3(y)=W(y)$ in $\mathbb{R}^d$ and $||\nabla_y\psi_3||_{\infty}\leq C$ in the above inequality.
Thus, in view of $(5.3)$,
\begin{equation}
||\nabla v_\varepsilon||_{L^{2}(\Omega)}\leq C||\nabla u_\varepsilon||_{L^{2}(\Omega)}\leq C\sqrt{\lambda}.
\end{equation}
Then, according to the $W^{2,p}$ estimates for the constant coefficients,
\begin{equation}
||\nabla^2 v_\varepsilon||_{L^{2}(\Omega)} \leq C \left(\left\|\lambda u_{\varepsilon}+R_{\varepsilon, \lambda}(f)\right\|_{L^{2}(\Omega)} +||\mathcal{M}(W\chi_w) v_\varepsilon||_{L^{2}(\Omega)}\right)\leq C\lambda.
\end{equation}
To estimate $\varepsilon^{-1}||\nabla u_\varepsilon||_{L^2(\Omega_{\varepsilon})}^2$, we use the $H^1$ convergence estimates $(3.14)$ to obtain
\begin{equation}\begin{aligned}
\frac{1}{\varepsilon}\int_{\Omega_\varepsilon}\left|\partial_{x_i}u_\varepsilon\right|^2dx\leq& \frac{C}{\varepsilon}\int_{\Omega_\varepsilon}
\left|\partial_{x_i}\left(u_{\varepsilon}-v_\varepsilon-\left\{\Phi_{\varepsilon, \ell}-x_{\ell}\right\} \partial_{x_\ell}
v_\varepsilon-\varepsilon\chi_w^\varepsilon v_\varepsilon\right)\right|^2dx\\
&+\frac{C}{\varepsilon}\int_{\Omega_\varepsilon} |\nabla v_\varepsilon|^2dx+C\varepsilon \int_{\Omega_\varepsilon} |\nabla^2 v_\varepsilon|^2dx\\
\leq & C\varepsilon \lambda^2+\frac{C}{\varepsilon}\int_{\Omega_\varepsilon} |\nabla v_\varepsilon|^2dx,
\end{aligned}\end{equation}
 where we have used $||\nabla \Phi_\varepsilon||_\infty\leq C$ in \cite{avellaneda1987compactness}, $||\nabla_y \chi_w||_\infty+||\chi_w||_\infty\leq C$,
 due to $\chi_w\in C^{1,\alpha}$ under the assumption that $A$ is Lipschitz, $||\Phi_{\varepsilon,j}-x_j||_\infty\leq C\varepsilon$ as well as $(5.19)$ and $(3.14)$.
 Furthermore, we may use the Fundamental Theorem of Calculus to obtain
 \begin{equation}\begin{aligned}
\frac{1}{\varepsilon} \int_{\Omega_{\varepsilon}}\left|\nabla v_{\varepsilon}\right|^{2} d x & \leq C \int_{\partial \Omega}\left|\nabla v_{\varepsilon}\right|^{2} d \sigma+C \varepsilon \int_{\Omega_{\varepsilon}}\left|\nabla^{2} v_{\varepsilon}\right|^{2} d x \\
& \leq C \int_{\partial \Omega}\left|\nabla v_{\varepsilon}\right|^{2} d \sigma+C \varepsilon \lambda^{2},
\end{aligned}\end{equation}
 where we have used $(5.21)$ for the second inequality. As a result, it suffices to show that
\begin{equation}\int_{\partial \Omega}\left|\nabla v_{\varepsilon}\right|^{2} d \sigma \leq C \lambda(1+\varepsilon \lambda).\end{equation}
To this end we use a Rellich identity for $\mathcal{L}_0$, similar to $(5.4)$ for $\mathcal{L}_\varepsilon$.
\begin{equation}\begin{aligned}
\int_{\partial \Omega}\left|\nabla v_{\varepsilon}\right|^{2} d x & \leq C \int_{\Omega}\left|\nabla v_{\varepsilon}\right|^{2} d x+C\left|\int_{\Omega}\left\{\lambda u_{\varepsilon}+R_{\varepsilon, \lambda}(f)-\mathcal{M}(W\chi_w)v_\varepsilon\right\} \cdot \frac{\partial v_{\varepsilon}}{\partial x_{k}} \cdot h_{k} d x\right| \\
& \leq C \lambda+C \lambda\left|\int_{\Omega} u_{\varepsilon} \cdot \frac{\partial v_{\varepsilon}}{\partial x_{k}} \cdot h_{k} d x\right| \\
& \leq C \lambda+C \lambda\left|\int_{\Omega} v_{\varepsilon} \cdot \frac{\partial u_{\varepsilon}}{\partial x_{k}} \cdot h_{k} d x\right|+C\lambda\left|\int_{\Omega} u_{\varepsilon} \cdot v_{\varepsilon} \cdot \operatorname{div}(h) d x\right|,
\end{aligned}\end{equation}
where $h=(h_1,\cdots,h_d)\in C^\infty_0(\mathbb{R}^d)$ is a vector field such that $h_kn_k\geq c>0$ on $\partial\Omega$ and $|h|+|\nabla h|+|\nabla^2h|\leq C$.
Note that $\frac{1}{\varepsilon}W^\varepsilon u_\varepsilon=\varepsilon(\Delta_x\psi_3^\varepsilon)u_\varepsilon=\nabla_x(\nabla_y \psi_3^\varepsilon u_\varepsilon)-\nabla_y \psi_3^\varepsilon \nabla_x u_\varepsilon$, and $\mathcal{L}_{\varepsilon}\left(S_{\varepsilon, \lambda}(f)\right)=\lambda S_{\varepsilon, \lambda}(f)+R_{\varepsilon, \lambda}(f)$ in $\Omega$, then
\begin{equation*}\begin{aligned}
\left\|u_{\varepsilon}\right\|_{H^{-1}(\Omega)} &=\lambda^{-1}\left\|\mathcal{L}_{\varepsilon}\left(u_{\varepsilon}\right)-R_{\varepsilon, \lambda}(f)\right\|_{H^{-1}(\Omega)} \\
& \leq C \lambda^{-1}\left(\left\|\nabla u_{\varepsilon}\right\|_{L^{2}(\Omega)}+\left\|R_{\varepsilon, \lambda}(f)\right\|_{L^{2}(\Omega)}
+|| u_\varepsilon||_{L^{2}(\Omega)}\right)\\
& \leq C \lambda^{-1 / 2},
\end{aligned}\end{equation*}
where we have used $||\nabla_y \chi_w||_\infty\leq C$ and $(5.3)$. It follows that
\begin{equation}\lambda\left|\int_{\Omega} u_{\varepsilon} v_{\varepsilon} \operatorname{div}(h) d x\right| \leq C \lambda\left\|u_{\varepsilon}\right\|_{H^{-1}(\Omega)}\left\|v_{\varepsilon} \operatorname{div}(h)\right\|_{H_{0}^{1}(\Omega)} \leq C \lambda.\end{equation}
Integration by parts gives
\begin{equation}\begin{aligned}
\left|\int_{\Omega} v_{\varepsilon} \frac{\partial u_{\varepsilon}}{\partial x_{k}} h_{k} d x\right| \leq &\left|\int_{\Omega}\left(u_{\varepsilon}-v_{\varepsilon}\right) \frac{\partial u_{\varepsilon}}{\partial x_{k}} h_{k} d x\right|+\frac{1}{2}\left|\int_{\Omega}| u_{\varepsilon}|^{2} \operatorname{div}(h) d x\right|\\
\leq &\left|\int_{\Omega}\left\{u_{\varepsilon}-v_{\varepsilon}-\left\{\Phi_{\varepsilon, j}-x_{j} \right\} \frac{\partial v_{\varepsilon}}{\partial x_{j}}-\varepsilon\chi_w v_\varepsilon\right\} \frac{\partial u_{\varepsilon}}{\partial x_{k}} h_{k} d x\right| \\
&+\left|\int_{\Omega}\left(\left\{\Phi_{\varepsilon, j}-x_{j}\right\} \frac{\partial v_{\varepsilon}}{\partial x_{j}} +\varepsilon\chi_w v_\varepsilon\right) \frac{\partial u_{\varepsilon}}{\partial x_{k}} h_{k} d x\right|+C \\
\leq & C\left\|\left(\nabla u_{\varepsilon}\right) h\right\|_{H^{-1}(\Omega)}\left\|u_{\varepsilon}-v_{\varepsilon}-\left\{\Phi_{\varepsilon, j}-x_{j}\right\} \frac{\partial v_{\varepsilon}}{\partial x_{j}}-\varepsilon\chi_w v_\varepsilon\right\|_{H_{0}^{1}(\Omega)} \\
&+C \varepsilon\left\|\nabla u_{\varepsilon}\right\|_{L^{2}(\Omega)}\left\|\nabla v_{\varepsilon}\right\|_{L^{2}(\Omega)}+C \\
& \leq C+C \varepsilon \lambda,
\end{aligned}\end{equation} where we have used Theorem 1.3 as well as the estimates $(5.3)$, $(5.19)$ and $(5.20)$ for the last inequality. This completes the proof after combining $(5.25)$-$(5.27)$.
\end{proof}
\section{Lower bound}
In this section we give the proof of Theorem 1.6. Throughout this section we will assume that $\Omega$ is a bounded $C^2$ domain in $\mathbb{R}^d$, $d\geq 2$. We will also assume that $A$ satisfies the conditions $(1.2)$ and $(1.3)$, and $A$ is Lipschitz continuous as well as $\mathcal{M}(W\chi_w)>-\lambda_{0,1}'$.

Recall that $\Phi_\varepsilon(x)=(\Phi_{\varepsilon,i}(x))_{1\leq i\leq d}$ denotes the Dirichlet correctors for $\mathcal{L}_\varepsilon'$ in $\Omega$.
The following result states that the matrix $(\Phi_{\varepsilon,i}(x))_{1\leq i\leq d}$ is invertible near the boundary, whose proof can be founded in \cite[Lemma 5.1]{kenig2013estimates}.
\begin{lemma}
Let $J(\Phi_\varepsilon)$ denote the absolute value of the determinant of the $d\times d$ matrix $(\partial \Phi_\varepsilon/\partial{x_j})$. Then there exists constants $\varepsilon_2>0$ and $c>0$, depending only on $A$ and $\Omega$, such that for $0<\varepsilon<\varepsilon_2$,
$$J(\Phi_\varepsilon)(x)\geq c,\text{\quad if }x\in \Omega \text{ and dist}(x,\partial\Omega)\leq c\varepsilon.$$
\end{lemma}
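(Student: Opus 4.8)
The plan is to argue by contradiction via a blow-up (rescaling) at the boundary, exploiting that the constant $c$ occurs on \emph{both} sides of the asserted inequality. Suppose the lemma fails. Then for each $k\in\mathbb{N}$, applying the negation with $\varepsilon_2=c=1/k$, there are $\varepsilon_k\in(0,1/k)$ and $x_k\in\Omega$ with $\delta_k:=\operatorname{dist}(x_k,\partial\Omega)\le \varepsilon_k/k$ and $J(\Phi_{\varepsilon_k})(x_k)<1/k$; in particular $\varepsilon_k\to0$, $\delta_k/\varepsilon_k\to0$, and $J(\Phi_{\varepsilon_k})(x_k)\to0$. Let $z_k\in\partial\Omega$ be a nearest point to $x_k$, put $\widetilde\Omega_k:=\varepsilon_k^{-1}(\Omega-z_k)$, and define the blow-up
\[
w_k(y):=\varepsilon_k^{-1}\bigl(\Phi_{\varepsilon_k}(z_k+\varepsilon_k y)-z_k\bigr),\qquad y\in\widetilde\Omega_k .
\]
The point $x_k$ corresponds to $y_k:=\varepsilon_k^{-1}(x_k-z_k)$, which lies in the inward normal direction at $z_k$ at distance $\delta_k/\varepsilon_k\to0$, so $y_k\to0$.

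Next I would record the properties of $w_k$ and pass to a limit. A chain-rule computation using $-\operatorname{div}(A^{\varepsilon_k}\nabla\Phi_{\varepsilon_k,j})=0$ in $\Omega$ gives $-\operatorname{div}_y(\widetilde A_k\nabla_y w_{k,j})=0$ in $\widetilde\Omega_k$ with $\widetilde A_k(y):=A(z_k/\varepsilon_k+y)$, and $\nabla_y w_k(y)=(\nabla_x\Phi_{\varepsilon_k})(z_k+\varepsilon_k y)$; the matrices $\widetilde A_k$, being translates of $A$, are uniformly elliptic and uniformly Lipschitz. Moreover $w_{k,j}(y)=y_j$ on $\partial\widetilde\Omega_k$ since $\Phi_{\varepsilon_k,j}=x_j$ there, and Lemma 3.2 yields $\|w_k-\mathrm{id}\|_{L^\infty(\widetilde\Omega_k)}\le C$, so $w_k$ is bounded in $L^\infty$ on bounded subsets of $\widetilde\Omega_k$. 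Because $\Omega$ is a $C^2$ domain, the rescaled domains $\widetilde\Omega_k$ converge locally, in $C^{1,1}$, to a half-space $H:=\{y:y\cdot\nu<0\}$ (after a subsequence with $n(z_k)\to\nu$); passing to a further subsequence so that the fractional part of $z_k/\varepsilon_k$ converges to some $b\in[0,1]^d$, we get $\widetilde A_k\to A_b:=A(b+\cdot)$ locally uniformly. Interior and boundary Hölder/Schauder estimates (De Giorgi--Nash for $w_k$, then gradient Schauder for divergence-form equations, using the uniformly Lipschitz coefficients, the flat-converging $C^{1,1}$ boundaries, and the smooth boundary data $y_j$) give uniform $C^{1,\alpha}$ bounds up to the boundary; along a subsequence $w_k\to w$ in $C^1_{\mathrm{loc}}(\overline H)$. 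The limit $w=(w_1,\dots,w_d)$ solves $-\operatorname{div}(A_b\nabla w_j)=0$ in $H$, satisfies $w_j=y_j$ on $\partial H$, and $\|w-\mathrm{id}\|_{L^\infty(H)}\le C$.

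The argument then closes by a Hopf-lemma estimate for the limit. After rotating source and target we may assume $\nu=e_d$, so $H=\{y_d<0\}$. On $\partial H$ the relation $w_j=y_j$ fixes all tangential derivatives, $\partial_{y_i}w_j=\delta_{ij}$ for $1\le i\le d-1$; hence $\nabla w|_{\partial H}=I+e_d\otimes v$ with $v_j=\partial_{y_d}w_j-\delta_{jd}$, and therefore $\det(\nabla w)|_{\partial H}=1+v_d=\partial_{y_d}w_d$. The component $w_d$ solves the scalar equation $-\operatorname{div}(A_b\nabla w_d)=0$ in $H$, vanishes on $\partial H$, and satisfies $w_d(y)=y_d+O(1)\to-\infty$ as $y_d\to-\infty$; the maximum principle (using boundedness of $w_d$ on slabs) forces $w_d\le0$ in $H$, the strong maximum principle gives $w_d<0$ in $H$ (it is not constant), and Hopf's lemma at $0\in\partial H$ (the interior ball condition being automatic for a half-space, and $A_b$ being continuous) yields $\partial_{y_d}w_d(0)>0$. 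Hence $\det\nabla w(0)=\partial_{y_d}w_d(0)>0$. On the other hand $y_k\to0$ and $w_k\to w$ in $C^1$ near $0$, while $\det\nabla w_k(y_k)=\det(\nabla_x\Phi_{\varepsilon_k})(x_k)$ and $J(\Phi_{\varepsilon_k})(x_k)=|\det(\nabla_x\Phi_{\varepsilon_k})(x_k)|$, so $|\det\nabla w(0)|=\lim_k J(\Phi_{\varepsilon_k})(x_k)=0$, a contradiction. This proves the lemma, with $\varepsilon_2$ and $c$ the constants produced by the argument.

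The step I expect to be the main obstacle is the compactness in the middle paragraph: making rigorous that the $C^2$ regularity of $\Omega$ forces the rescaled domains $\widetilde\Omega_k$ to flatten uniformly to a half-space, and obtaining \emph{uniform} up-to-the-boundary $C^{1,\alpha}$ bounds for $w_k$ (which requires the uniformly Lipschitz, hence Hölder, rescaled coefficients together with the $C^{1,1}$ convergence of the boundaries and the smoothness of the Dirichlet data), and then legitimizing the maximum-principle and Hopf arguments on the \emph{unbounded} limit domain $H$, for which the coercive behaviour $w_d\to-\infty$ at infinity is exactly what is needed.
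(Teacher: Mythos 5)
Your blow-up argument --- rescaling at a nearest boundary point, passing to a half-space limit with translated (periodic, Lipschitz) coefficients, reducing the boundary Jacobian to $\partial_{y_d}w_d$ via the tangential boundary condition, and invoking the Hopf lemma --- is correct and is essentially the compactness proof of \cite[Lemma 5.1]{kenig2013estimates}, which is all the paper itself offers for this lemma (it simply cites that reference). The technical points you flag (uniform up-to-the-boundary $C^{1,\alpha}$ estimates for the flattening $C^{1,1}$ domains, and the maximum principle for bounded solutions in a slab to obtain $w_d\le 0$ before applying Hopf) are exactly the ones requiring care, and they go through as you describe.
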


\begin{lemma}
Let $u_\varepsilon$ be a Dirichlet eigenfunction for $\mathcal{L}_\varepsilon$ in $\Omega$ with the associated eigenvalue $\lambda$ and $||u_\varepsilon||_{L^2(\Omega)}=1$. Then, if $0<\varepsilon<\min\{\varepsilon_1,\varepsilon_2\}$,
\begin{equation}\frac{1}{\varepsilon} \int_{\Omega_{c \varepsilon}}\left|\nabla u_{\varepsilon}\right|^{2} d x \geq c \lambda-C \varepsilon \lambda^{2},
\end{equation}where $c>0$ and $C>0$ depend only on $A$, $W$, $d$ and $\Omega$.
\end{lemma}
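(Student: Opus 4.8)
The plan is to compare $u_\varepsilon$, near $\partial\Omega$, with the solution $v_\varepsilon\in H^1_0(\Omega)$ of $\mathcal{L}_0 v_\varepsilon=\lambda u_\varepsilon$ in $\Omega$ (the same approximation used in Section 5), and then to bound the boundary-layer energy of $v_\varepsilon$ from below by a Rellich--Pohozaev identity for the constant-coefficient operator $\mathcal{L}_0$. Since the asserted inequality is vacuous once its right-hand side is nonpositive, we may assume $\varepsilon\lambda$ is as small as we please (enlarging $C$ if necessary). The basic estimates, all obtained as in Sections 2--5, are: $\|\nabla u_\varepsilon\|_{L^2(\Omega)}\le C\sqrt\lambda$ (from $\langle\mathcal{L}_\varepsilon u_\varepsilon,u_\varepsilon\rangle=\lambda$ and coercivity) and $\|\lambda u_\varepsilon\|_{H^{-1}(\Omega)}\le C\sqrt\lambda$ (using $\tfrac1\varepsilon W^\varepsilon u_\varepsilon=\nabla_x\!\cdot(u_\varepsilon\nabla_y\psi_3^\varepsilon)-\nabla_y\psi_3^\varepsilon\!\cdot\nabla_x u_\varepsilon$, cf. $(5.18)$), hence $\|\nabla v_\varepsilon\|_{L^2(\Omega)}\le C\sqrt\lambda$ and, by $W^{2,2}$ theory on the $C^2$ domain $\Omega$, $\|\nabla^2 v_\varepsilon\|_{L^2(\Omega)}\le C\lambda$; moreover, writing $\rho_\varepsilon=u_\varepsilon-v_\varepsilon-\{\Phi_{\varepsilon,j}-x_j\}\partial_j v_\varepsilon-\varepsilon\chi_w^\varepsilon v_\varepsilon$, Theorem 1.3 gives $\|\rho_\varepsilon\|_{H^1_0(\Omega)}\le C\varepsilon\lambda$, so $\|u_\varepsilon-v_\varepsilon\|_{L^2(\Omega)}\le C\varepsilon\lambda$ (because $\|\{\Phi_{\varepsilon,j}-x_j\}\partial_j v_\varepsilon\|_{L^2}+\|\varepsilon\chi_w^\varepsilon v_\varepsilon\|_{L^2}\le C\varepsilon(\|\nabla v_\varepsilon\|_{L^2}+\|v_\varepsilon\|_{L^2})\le C\varepsilon\sqrt\lambda$) and therefore $\|v_\varepsilon\|_{L^2(\Omega)}\ge 1/2$.

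First I transfer the desired bound from $u_\varepsilon$ to $v_\varepsilon$ on $\Omega_{c\varepsilon}$. Differentiating the decomposition of $u_\varepsilon$ and using $\varepsilon\partial_k\chi_w^\varepsilon=\partial_{y_k}\chi_w^\varepsilon$, the bare $\partial_k v_\varepsilon$ cancels and one obtains, on $\Omega_{c\varepsilon}$, $\partial_k u_\varepsilon=(\partial_k\Phi_{\varepsilon,j})\partial_j v_\varepsilon+\{\Phi_{\varepsilon,j}-x_j\}\partial^2_{jk}v_\varepsilon+(\partial_{y_k}\chi_w^\varepsilon)v_\varepsilon+\varepsilon\chi_w^\varepsilon\partial_k v_\varepsilon+\partial_k\rho_\varepsilon$. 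Using $\|\Phi_{\varepsilon,j}-x_j\|_{L^\infty}\le C\varepsilon$, $\|\nabla_y\chi_w\|_{L^\infty}+\|\chi_w\|_{L^\infty}\le C$, the layer Poincar\'e inequality $\|v_\varepsilon\|_{L^2(\Omega_{c\varepsilon})}\le C\varepsilon\|\nabla v_\varepsilon\|_{L^2(\Omega_{c\varepsilon})}$ (valid since $v_\varepsilon=0$ on $\partial\Omega$), and $\|\rho_\varepsilon\|_{H^1_0}\le C\varepsilon\lambda$, each of the last four terms contributes $O(\varepsilon\lambda^2)$ to $\tfrac1\varepsilon\int_{\Omega_{c\varepsilon}}|\cdot|^2$, whence
\[
\frac1\varepsilon\int_{\Omega_{c\varepsilon}}|\nabla u_\varepsilon|^2\,dx\ \ge\ \frac{c}{\varepsilon}\int_{\Omega_{c\varepsilon}}\Big|\textstyle\sum_j\partial_j v_\varepsilon\,\nabla\Phi_{\varepsilon,j}\Big|^2\,dx-C\varepsilon\lambda^2 .
\]
By the interior Lipschitz estimate $\|\nabla\Phi_\varepsilon\|_{L^\infty}\le C$ \cite{avellaneda1987compactness} and Lemma 6.1 ($J(\Phi_\varepsilon)\ge c$ on $\Omega_{c\varepsilon}$ for $\varepsilon<\varepsilon_2$), the matrix $(\partial_k\Phi_{\varepsilon,j})$ is invertible on $\Omega_{c\varepsilon}$ with inverse of norm $\le C$, so $|\sum_j\partial_j v_\varepsilon\,\nabla\Phi_{\varepsilon,j}|\ge c|\nabla v_\varepsilon|$ there; hence $\tfrac1\varepsilon\int_{\Omega_{c\varepsilon}}|\nabla u_\varepsilon|^2\ge c\,\tfrac1\varepsilon\int_{\Omega_{c\varepsilon}}|\nabla v_\varepsilon|^2-C\varepsilon\lambda^2$, and it remains to show $\tfrac1\varepsilon\int_{\Omega_{c\varepsilon}}|\nabla v_\varepsilon|^2\ge c\lambda-C\varepsilon\lambda^2$.

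For the last step, by the fundamental theorem of calculus in a tubular neighbourhood of $\partial\Omega$ (the companion lower bound to $(5.24)$, using $v_\varepsilon=0$ on $\partial\Omega$ and $\|\nabla^2 v_\varepsilon\|_{L^2}\le C\lambda$), $\tfrac1\varepsilon\int_{\Omega_{c\varepsilon}}|\nabla v_\varepsilon|^2\ge c\int_{\partial\Omega}|\partial_n v_\varepsilon|^2\,d\sigma-C\varepsilon\lambda^2$, so it suffices to bound $\int_{\partial\Omega}|\partial_n v_\varepsilon|^2\,d\sigma$ from below by $c\lambda-C\varepsilon\lambda^2$. For this I apply the Rellich--Pohozaev identity for $\mathcal{L}_0=-\operatorname{div}(\widehat{A}\nabla\cdot)+\mathcal{M}(W\chi_w)$ to $v_\varepsilon$ with a field $h$ satisfying $h\cdot n\ge c_0>0$ on $\partial\Omega$ and $|h|+|\nabla h|+|\nabla^2 h|\le C$; because $v_\varepsilon=0$ on $\partial\Omega$ the tangential-derivative boundary terms vanish, and (as $\widehat{A}$ is constant, so no $\partial_k\widehat{a}_{ij}$ term occurs)
\begin{align*}
\int_{\partial\Omega}(h\cdot n)\,\widehat{a}_{ij}n_in_j\,|\partial_n v_\varepsilon|^2\,d\sigma
&=\int_\Omega\big[2\partial_i h_k\,\widehat{a}_{ij}\partial_k v_\varepsilon\partial_j v_\varepsilon-(\operatorname{div}h)\,\widehat{a}_{ij}\partial_i v_\varepsilon\partial_j v_\varepsilon\big]\,dx\\
&\quad+\big(\lambda-\mathcal{M}(W\chi_w)\big)\int_\Omega(\operatorname{div}h)\,v_\varepsilon^2\,dx-2\lambda\int_\Omega(u_\varepsilon-v_\varepsilon)\,h\cdot\nabla v_\varepsilon\,dx .
\end{align*}
Substituting the energy identity $\int_\Omega\widehat{a}_{ij}\partial_i v_\varepsilon\partial_j v_\varepsilon=(\lambda-\mathcal{M}(W\chi_w))\|v_\varepsilon\|_{L^2}^2+O(\varepsilon\lambda^2)$ (which follows from $\langle\mathcal{L}_0 v_\varepsilon,v_\varepsilon\rangle=\lambda\langle u_\varepsilon,v_\varepsilon\rangle=\lambda\|v_\varepsilon\|_{L^2}^2+O(\varepsilon\lambda^2)$), taking $h(x)=x-x_\ast$ when $\Omega$ is star-shaped about $x_\ast$ so that the first volume integral and the $(\operatorname{div}h)v_\varepsilon^2$-term combine to $2(\lambda-\mathcal{M}(W\chi_w))\|v_\varepsilon\|_{L^2}^2+O(\varepsilon\lambda^2)$ (for a general $C^2$ domain one localizes the identity near $\partial\Omega$), and estimating the perturbation $2\lambda\int_\Omega(u_\varepsilon-v_\varepsilon)\,h\cdot\nabla v_\varepsilon=O(\varepsilon\lambda^2)$ — immediate for the $\{\Phi_{\varepsilon,j}-x_j\}\partial_j v_\varepsilon$ and $\varepsilon\chi_w^\varepsilon v_\varepsilon$ pieces from $\|\Phi_{\varepsilon,j}-x_j\|_{L^\infty}+\varepsilon\|\chi_w^\varepsilon\|_{L^\infty}\le C\varepsilon$ and $\|\nabla v_\varepsilon\|_{L^2}\le C\sqrt\lambda$, and for the $\rho_\varepsilon$ piece after integrating by parts to put the derivative on $\rho_\varepsilon$ — we obtain $\int_{\partial\Omega}|\partial_n v_\varepsilon|^2\,d\sigma\ge c\lambda-C\varepsilon\lambda^2$, using $\|v_\varepsilon\|_{L^2}^2\ge 1/4$, $\lambda-\mathcal{M}(W\chi_w)\ge c\lambda$, $h\cdot n\ge c_0$ and $\widehat{a}_{ij}n_in_j\ge\kappa$ on $\partial\Omega$. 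Combining the three displayed inequalities proves the lemma.

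I expect this final Rellich--Pohozaev step to be the crux. Two points need care: $v_\varepsilon$ is only an \emph{approximate} eigenfunction of $\mathcal{L}_0$, so the perturbation $\lambda(u_\varepsilon-v_\varepsilon)$ must be shown to cost only $O(\varepsilon\lambda^2)$ and not $O(\varepsilon\lambda^{5/2})$, which forces the integration-by-parts on the $H^1_0$-small remainder $\rho_\varepsilon$ rather than a crude $\|u_\varepsilon-v_\varepsilon\|_{L^2}\|\nabla v_\varepsilon\|_{L^2}$ bound; and for non star-shaped $C^2$ domains a single global vector field does not produce a favourable sign in the gradient-quadratic term, so the identity must be localized near $\partial\Omega$ (with the favourable sign produced in the half-space model) — a standard but slightly technical device. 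Everything else — the comparison with $v_\varepsilon$, the pointwise invertibility of $\nabla\Phi_\varepsilon$ on $\Omega_{c\varepsilon}$, and the tubular-neighbourhood estimate — is routine given Theorem 1.3 and Lemma 6.1.
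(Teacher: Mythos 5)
Your first two steps coincide with the paper's: the comparison of $\nabla u_\varepsilon$ with $(\nabla\Phi_\varepsilon)\nabla v_\varepsilon$ on $\Omega_{c\varepsilon}$ via Theorem 1.3, the invertibility of $\nabla\Phi_\varepsilon$ from Lemma 6.1, the layer Poincar\'e inequality, and the tubular-neighbourhood passage to $\int_{\partial\Omega}|\nabla v_\varepsilon|^2\,d\sigma$ are exactly (6.2)--(6.5). The duality treatment of the perturbation $\lambda(u_\varepsilon-v_\varepsilon)$ (pairing the $H^1_0$-small remainder against $\partial_k u_\varepsilon\,x_k$ in $H^{-1}$ rather than using the crude $L^2\times L^2$ bound) is also the paper's (6.10). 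So the only real divergence is the final Rellich step, and there you have a genuine gap.

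You want to read the Rellich identity as a \emph{lower} bound on the boundary term, which forces $h\cdot n\ge c_0>0$ on $\partial\Omega$ and hence either star-shapedness or a localization. The localization is not the ``standard but slightly technical device'' you hope for: the favourable algebra $2\partial_i h_k\,\widehat a_{ij}\partial_k v\,\partial_j v-(\operatorname{div}h)\,\widehat a_{ij}\partial_i v\partial_j v=(2-d)\,\widehat a_{ij}\partial_i v\partial_j v$, which lets you convert the gradient-quadratic volume terms into $(\lambda-\mathcal M(W\chi_w))\|v_\varepsilon\|_{L^2}^2$ via the energy identity, holds only because $\nabla h=I$. A general field with $h\cdot n\ge c_0$ (or a cut-off version of $x-x_\ast$) produces volume terms of size $C\int_\Omega|\nabla v_\varepsilon|^2\approx C\lambda$ with no sign, which is exactly the size of the main term you are trying to extract; so the argument does not close for a non-star-shaped $C^2$ domain as written. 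The paper avoids this entirely by keeping $h(x)=x$ (only assuming $0\in\Omega$) and reading the identity in the \emph{other} direction: it solves (6.8) for $2\lambda$ and bounds the boundary term from \emph{above} by $C\int_{\partial\Omega}|\nabla v_\varepsilon|^2\,d\sigma$ using only $|\langle x,n\rangle|\le\operatorname{diam}(\Omega)$, together with the sign $\mathcal M(W\chi_w)<0$ from (2.6) to discard the $\int_\Omega v_\varepsilon^2$ term. This yields $2\lambda\le C\int_{\partial\Omega}|\nabla v_\varepsilon|^2\,d\sigma+C\varepsilon\lambda^2$ on any bounded $C^2$ domain, which is all that is needed. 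Replacing your final paragraph by this rearrangement repairs the proof; the rest stands.
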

\begin{proof}
Let $v_\varepsilon$ be the unique solution in $H^1_0(\Omega)$ to the equation $\mathcal{L}_0(v_\varepsilon)=\lambda u_\varepsilon$ in $\Omega$. As in the proof of Theorem $(5.3)$, we have $||\nabla v_\varepsilon||_{L^2(\Omega)}\leq C\sqrt{\lambda}$ and $||\nabla^2 v_\varepsilon||_{L^2(\Omega)}\leq C\lambda$.
Moreover, it follows from $(3.14)$ that
\begin{equation}
||\nabla u_\varepsilon-\left(\nabla \Phi_{\varepsilon}(x)\right)\nabla v_\varepsilon-\{\Phi_{\varepsilon}(x)-x\}\nabla^2 v_\varepsilon-\varepsilon\chi_w^ \varepsilon \nabla v_\varepsilon-\nabla_y\chi_w^ \varepsilon v_\varepsilon||_{L^2(\Omega_{c\varepsilon})}^2\leq C \varepsilon ^2\lambda^2.
\end{equation}
Hence,
\begin{equation}\begin{aligned}
\frac{1}{\varepsilon} \int_{\Omega_{c\varepsilon}}\left|\nabla u_{\varepsilon}\right|^{2} d x \geq& \frac{1}{2 \varepsilon} \int_{\Omega_{c\varepsilon}}\left|\left(\nabla \Phi_{\varepsilon}\right) \nabla v_{\varepsilon}\right|^{2} d x-C\varepsilon\int_{\Omega_{c\varepsilon}}\left(\left|\nabla^2 v_{\varepsilon}\right|^{2}+\left|\nabla v_{\varepsilon}\right|^{2}\right) d x\\
&\quad-\frac{1}{\varepsilon} \int_{\Omega_{c\varepsilon}}\left|v_{\varepsilon}\right|^{2} d x-C \varepsilon \lambda^{2}\\
 &\geq \frac{c}{\varepsilon} \int_{\Omega_{c\varepsilon}}\left|\nabla v_{\varepsilon}\right|^{2} d x-C \varepsilon \lambda^{2},
\end{aligned}\end{equation}
where we used Lemma 6.1 and the Poincar\'{e} inequality $||v_\varepsilon||_{L^2(\Omega_{c\varepsilon})}\leq C \varepsilon ||\nabla v_\varepsilon||_{L^2(\Omega_{c\varepsilon})}$.  Using
\begin{equation}\int_{\partial \Omega}\left|\nabla v_{\varepsilon}\right|^{2} d \sigma \leq \frac{C}{\varepsilon} \int_{\Omega_{c \varepsilon}}\left|\nabla v_{\varepsilon}\right|^{2} d x+C \varepsilon \int_{\Omega_{c \varepsilon}}\left|\nabla^{2} v_{\varepsilon}\right|^{2} d x\end{equation} and $||\nabla^2 v_\varepsilon||_{L^2(\Omega)}\leq C \lambda$, we further obtain
\begin{equation}\frac{1}{\varepsilon} \int_{\Omega_{c \varepsilon}}\left|\nabla u_{\varepsilon}\right|^{2} d x \geq c \int_{\partial \Omega}\left|\nabla v_{\varepsilon}\right|^{2} d \sigma-C \varepsilon \lambda^{2}.\end{equation} We claim that
\begin{equation}\lambda \leq C \int_{\partial \Omega}\left|\nabla v_{\varepsilon}\right|^{2} d \sigma+C \varepsilon \lambda^{2},\end{equation}
then combining $(6.5)$ and $(6.6)$ gives the desired estimates $(6.1)$. To see $(6.6)$, without loss of generality, assume that $0\in \Omega$. It follows by taking $h(x)=x$ in a Rellich identity for $\mathcal{L}_0$, similar to $(5.4)$ that,
\begin{equation}\begin{aligned}
&\int_{\partial \Omega}\langle x, n\rangle \hat{a}_{i j} \frac{\partial v_{\varepsilon}}{\partial x_{j}}\frac{\partial v_{\varepsilon}}{\partial x_{i}} d \sigma \\
=&(2-d) \int_{\Omega} \hat{a}_{i j} \frac{\partial v_{\varepsilon}}{\partial x_{j}}\frac{\partial v_{\varepsilon}}{\partial x_{i}} d x-2 \lambda
\int_{\Omega} u_{\varepsilon} \frac{\partial v_{\varepsilon}}{\partial x_{k}} x_{k} d x +2\mathcal{M}(W\chi_w)\int_{\Omega} v_{\varepsilon} \frac{\partial v_{\varepsilon}}{\partial x_{k}} x_{k} d x\\
=&(2-d) \lambda \int_{\Omega} u_{\varepsilon} v_{\varepsilon} d x-2 \lambda \int_{\Omega} u_{\varepsilon} \frac{\partial v_{\varepsilon}}{\partial x_{k}}
x_{k} d x-2\mathcal{M}(W\chi_w)\int_{\Omega} v_{\varepsilon} ^2d x,
\end{aligned}\end{equation}
where we have used the following equality
$$\begin{aligned}
(2-d) \int_{\Omega} \hat{a}_{i j} \frac{\partial v_{\varepsilon}}{\partial x_{j}}\frac{\partial v_{\varepsilon}}{\partial x_{i}} d x=(2-d)\lambda \int_{\Omega}u_\varepsilon v_\varepsilon dx- (2-d)\mathcal{M}(W\chi_w)\int_{\Omega} v_\varepsilon^2 dx,
\end{aligned}$$
obtained by multiplying the equation $\mathcal{L}_0(v_\varepsilon)=\lambda u_\varepsilon$ by $v_\varepsilon$ and integrating the resulting equation over $\Omega$.
It is easy to see that
\begin{equation*}\begin{aligned}
2 \int_{\Omega} u_{\varepsilon} \frac{\partial v_{\varepsilon}}{\partial x_{k}} x_{k} d x &=-2 \int_{\Omega} \frac{\partial u_{\varepsilon}}{\partial x_{k}} v_{\varepsilon} x_{k} d x-2 d \int_{\Omega} u_{\varepsilon} v_{\varepsilon} d x \\
&=d-2 \int_{\Omega} \frac{\partial u_{\varepsilon}}{\partial x_{k}}\left(v_{\varepsilon}-u_{\varepsilon}\right) x_{k} d x-2 d \int_{\Omega} u_{\varepsilon} v_{\varepsilon} d x.
\end{aligned}\end{equation*}
Then, \begin{equation}\begin{aligned}
&\int_{\partial \Omega}\langle x, n\rangle \hat{a}_{i j} \frac{\partial v_{\varepsilon}}{\partial x_{j}} \cdot \frac{\partial v_{\varepsilon}}{\partial x_{i}} d \sigma \\
&=2 \lambda+(d+2) \lambda \int_{\Omega} u_{\varepsilon}\left(v_{\varepsilon}-u_{\varepsilon}\right) d x+2 \lambda \int_{\Omega} \frac{\partial u_{\varepsilon}}{\partial x_{k}}\left(v_{\varepsilon}-u_{\varepsilon}\right) x_{k} d x-2\mathcal{M}(W\chi_w)\int_{\Omega} v_{\varepsilon} ^2d x.
\end{aligned}\end{equation}
It follows from $(2.6)$ that $-\mathcal{M}(W\chi_w)>0$, then
\begin{equation}\begin{aligned}
2 \lambda &\leq C \int_{\partial \Omega}\left|\nabla v_{\varepsilon}\right|^{2} d \sigma+C \lambda\left\|u_{\varepsilon}-v_{\varepsilon}\right\|_{L^{2}(\Omega)}+2 \lambda\left|\int_{\Omega} \frac{\partial u_{\varepsilon}}{\partial x_{k}}\left(u_{\varepsilon}-v_{\varepsilon}\right) x_{k} d x\right|\\
&\leq C \int_{\partial \Omega}\left|\nabla v_{\varepsilon}\right|^{2} d \sigma+C \varepsilon\lambda^2+2 \lambda\left|\int_{\Omega} \frac{\partial u_{\varepsilon}}{\partial x_{k}}\left(u_{\varepsilon}-v_{\varepsilon}\right) x_{k} d x\right|.
\end{aligned}\end{equation}
where we have used $(3.14)$ to show that $||u_\varepsilon-v_\varepsilon||_{L^2(\Omega)}\leq C\varepsilon \lambda||u_\varepsilon||_{L^2(\Omega)}$.
Also, the last term on the RHS of $(6.9)$ is dominated by
\begin{equation}\begin{aligned}
&2 \lambda\left|\int_{\Omega} \frac{\partial u_{\varepsilon}}{\partial x_{k}}\left[u_{\varepsilon}-v_{\varepsilon}-\left(\Phi_{\varepsilon, j}-x_{j}\right) \frac{\partial v_{\varepsilon}}{\partial x_{j}}-\varepsilon\chi_w v_\varepsilon\right] x_{k} d x\right|+C \lambda \varepsilon\left\|\nabla u_{\varepsilon}\right\|_{L^{2}(\Omega)}\left\|\nabla v_{\varepsilon}\right\|_{L^{2}(\Omega)} & \\
\leq & C \lambda\left\|\frac{\partial u_{\varepsilon}}{\partial x_{k}} x_{k}\right\|_{H^{-1}(\Omega)}\left\|u_{\varepsilon}-v_{\varepsilon}-\left(\Phi_{\varepsilon, j}-x_{j}\right) \frac{\partial v_{\varepsilon}}{\partial x_{j}}-\varepsilon\chi_w v_\varepsilon\right\|_{H_{0}^{1}(\Omega)}+C \varepsilon \lambda^{2} \\
\leq & C \varepsilon \lambda^{2},
\end{aligned}\end{equation}
where we have used$||\nabla v_\varepsilon||_{L^2(\Omega)}\leq C\sqrt{\lambda}$ and $||\nabla^2 v_\varepsilon||_{L^2(\Omega)}\leq C\lambda$ as well as $(3.14)$. This completes the proof of $(6.6)$.\end{proof}

Let $\psi:\mathbb{R}^{d-1}\mapsto \mathbb{R}$ be a $C^2$ function with $\psi(0)=|\nabla \psi(0)|=0$. Define
\begin{equation}\begin{aligned}
Z_{r} &=Z(\psi, r)=\left\{x=\left(x^{\prime}, x_{d}\right) \in \mathbb{R}^{d}:\left|x^{\prime}\right|<r \text { and } \psi\left(x^{\prime}\right)<x_{d}<r+\psi\left(x^{\prime}\right)\right\} \\
I_{r} &=I(\psi, r)=\left\{x=\left(x^{\prime}, x_{d}\right) \in \mathbb{R}^{d}:\left|x^{\prime}\right|<r \text { and } x_{d}=\psi\left(x^{\prime}\right)\right\}.
\end{aligned}\end{equation}

\begin{lemma}Let $u\in H^1(Z_2)$. Suppose that $\operatorname{div}(A\nabla u)+Bu=0$ in $Z_2$ and $u=0$ in $I_2$ for some $B\in L^\infty(Z_2)$. Also assume that $||B||_\infty+||\nabla A||_\infty+||\nabla^2 \psi||_\infty\leq C_0$ and
\begin{equation}\int_{Z_{1}}|\nabla u|^{2} d x \geq c_{0} \int_{Z_{2}}|\nabla u|^{2} d x\end{equation}
for some $C_0>0$, $c_0>0$. Then
\begin{equation}\int_{I_{1}}|\nabla u|^{2} d \sigma \geq c \int_{Z_{2}}|\nabla u|^{2} d x,\end{equation}
where $c>0$ depends only on the ellipticity constant of $A$, $c_0$, and $C_0$.
\end{lemma}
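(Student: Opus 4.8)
I would argue by contradiction, via a normalization and compactness scheme in the spirit of \cite{kenig2013estimates}. Suppose the conclusion fails. Then there exist functions $\psi_m\in C^2$ with $\psi_m(0)=|\nabla\psi_m(0)|=0$, matrices $A_m$ uniformly elliptic with constant $\kappa$, potentials $B_m\in L^\infty$, and $u_m\in H^1(Z(\psi_m,2))$ solving $\operatorname{div}(A_m\nabla u_m)+B_mu_m=0$ in $Z(\psi_m,2)$ with $u_m=0$ on $I(\psi_m,2)$, subject to $\|B_m\|_\infty+\|\nabla A_m\|_\infty+\|\nabla^2\psi_m\|_\infty\le C_0$ and $\int_{Z(\psi_m,1)}|\nabla u_m|^2\,dx\ge c_0\int_{Z(\psi_m,2)}|\nabla u_m|^2\,dx$, but $\int_{I(\psi_m,1)}|\nabla u_m|^2\,d\sigma<\tfrac1m\int_{Z(\psi_m,2)}|\nabla u_m|^2\,dx$. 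Normalize so that $\int_{Z(\psi_m,2)}|\nabla u_m|^2\,dx=1$; since $u_m=0$ on $I(\psi_m,2)$, the Poincar\'e inequality then bounds $u_m$ in $H^1$.

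Next I would flatten the boundary by the volume-preserving change of variables $y=(x',x_d-\psi_m(x'))$, which carries $Z(\psi_m,r)$ onto the fixed cylinder $Q_r=\{|y'|<r,\ 0<y_d<r\}$ and $I(\psi_m,r)$ onto $T_r=\{|y'|<r,\ y_d=0\}$. Because $\psi_m(0)=|\nabla\psi_m(0)|=0$ and $\|\nabla^2\psi_m\|_\infty\le C_0$, the Arzel\`a--Ascoli theorem gives, along a subsequence, $\psi_m\to\psi$ in $C^1$ with $\psi\in C^{1,1}$, and the transformed coefficients $\widetilde A_m$ stay uniformly elliptic with $\|\nabla\widetilde A_m\|_\infty$ bounded in terms of $\kappa$ and $C_0$. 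Writing $\widetilde u_m,\widetilde B_m$ for the transformed solution and potential, one gets $\operatorname{div}(\widetilde A_m\nabla\widetilde u_m)+\widetilde B_m\widetilde u_m=0$ in $Q_2$, $\widetilde u_m=0$ on $T_2$, $\|\widetilde u_m\|_{H^1(Q_2)}\le C$, $\int_{Q_1}|\nabla\widetilde u_m|^2\,dx\ge c\,c_0$, and $\int_{T_1}|\nabla\widetilde u_m|^2\,d\sigma\to0$. Passing to a further subsequence, $\widetilde A_m\to A$ uniformly on $\overline{Q_2}$, $\widetilde B_m\rightharpoonup B$ weak-$*$ in $L^\infty$, and $\widetilde u_m\rightharpoonup u$ weakly in $H^1(Q_2)$ and strongly in $L^2(Q_2)$; hence $u$ is a weak solution of $\operatorname{div}(A\nabla u)+Bu=0$ in $Q_2$ with $u=0$ on $T_2$.

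Then I would show that the full Cauchy data of $u$ vanish on $T_1$. Since $A$ is Lipschitz with uniform bounds, the relevant part of $\partial Q_2$ is flat, and $\widetilde u_m$ has homogeneous Dirichlet data there, the boundary $W^{2,2}$ estimate yields a uniform bound for $\widetilde u_m$ in $H^2$ on a half-ball $B(0,3/2)\cap\{y_d>0\}$ whose flat face contains $\overline{T_1}$ in its interior; consequently the traces $\nabla\widetilde u_m|_{T_1}$ are bounded in $H^{1/2}(T_1)$ and, after a further subsequence, converge in $L^2(T_1)$ to $\nabla u|_{T_1}$. Therefore $\int_{T_1}|\nabla u|^2\,d\sigma=\lim_m\int_{T_1}|\nabla\widetilde u_m|^2\,d\sigma=0$, so $u=0$ and $\nabla u=0$ on $T_1$. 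Extending $u$ by zero across $T_1$ and matching these vanishing Cauchy data, the extension is a weak solution of an elliptic equation with Lipschitz leading coefficient and bounded zeroth-order term in a full neighbourhood of an interior point of $T_1$, and it vanishes on an open set; by the unique continuation principle for such equations and the connectedness of $Q_2$, $u\equiv0$ in $Q_2$.

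Finally, with $u\equiv0$ we have $\widetilde u_m\to0$ in $L^2(Q_2)$, so the Caccioppoli inequality, tested with $\zeta^2\widetilde u_m$ for a cutoff $\zeta$ equal to $1$ on $\overline{Q_1}$ and supported away from $\partial Q_2\setminus T_2$ (so that $\zeta^2\widetilde u_m$ vanishes on the whole of $\partial Q_2\cap\operatorname{supp}\zeta$), gives $\int_{Q_1}|\nabla\widetilde u_m|^2\,dx\le C\|\widetilde u_m\|_{L^2(Q_2)}^2\to0$, contradicting $\int_{Q_1}|\nabla\widetilde u_m|^2\,dx\ge c\,c_0>0$. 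The main obstacle is the third step: one needs the boundary $H^2$-estimate with merely Lipschitz coefficients in order to pass the boundary gradient to the limit, followed by a boundary unique continuation argument from vanishing Cauchy data — and it is precisely here that the hypotheses $\|\nabla A\|_\infty\le C_0$ and $\|\nabla^2\psi\|_\infty\le C_0$ are used in an essential way.
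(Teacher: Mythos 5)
Your proposal is correct and is essentially the same argument the paper relies on: the paper simply cites the compactness proof of Lemma 5.3 in \cite{kenig2013estimates}, noting that the only change needed is to replace the convergence of constants $E_k\to E$ by $B_k\rightharpoonup B$ weak-$*$ in $L^\infty$, which is exactly the blow-up/contradiction scheme (flattening, weak limits, vanishing Cauchy data via boundary regularity, unique continuation, Caccioppoli) that you carry out in detail. Your use of the boundary $H^2$ estimate in place of the $C^{1,\alpha}$ estimate mentioned in the paper is an inessential variation; both yield the strong convergence of $\nabla \widetilde u_m$ on $T_1$ needed to pass to the limit.
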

\begin{proof}
The lemma is proved by a compactness argument and the similar result can be found in \cite[Lemma 5.3]{kenig2013estimates}, where the only difference is that the function $B$ is
replaced by some constant $E\in \mathbb{R}$. Note that for $B\in L^\infty$, the boundary $C^{1,\alpha}$ estimate as well as the unique continuation
property of solution of second-order elliptic equations with Lipschitz continuous coefficients continue to hold (e.g. see \cite{kurata1993unique}). The only difference is that we need to
replace $E_k\rightarrow E$ in $\mathbb{R}$ by $B_k\rightharpoonup B$ weak-* in $L^\infty(Z_2)$ in the compactness argument. We omit the proof and refer readers to \cite[Lemma 5.3]{kenig2013estimates} for the details.
\end{proof}

\begin{rmk}
Suppose that $\mathcal{L}_\varepsilon u_\varepsilon= \lambda u_\varepsilon$ in $Z(\psi,2\varepsilon)$ and $u_\varepsilon=0$ in $I(\psi,2\varepsilon)$ for some $\lambda>1$. Assume that $\varepsilon^{2} \lambda+\varepsilon||W||_{\infty}+\|\nabla A\|_{\infty}+\left\|\nabla^{2} \psi\right\|_{\infty} \leq C_{0}$ and
\begin{equation}\int_{Z(\psi, \varepsilon)}\left|\nabla u_{\varepsilon}\right|^{2} d x \geq c_{0} \int_{Z(\psi, 2 \varepsilon)}\left|\nabla u_{\varepsilon}\right|^{2} d x,\end{equation}for some $c_0$, $C_0>0$. Then
\begin{equation}\int_{I(\psi, \varepsilon)}\left|\nabla u_{\varepsilon}\right|^{2} d \sigma \geq \frac{c}{\varepsilon} \int_{Z(\psi, 2 \varepsilon)}\left|\nabla u_{\varepsilon}\right|^{2} d x,\end{equation}
where $c>0$ depends only on the ellipticity constant of $A$, $c_0$ and $C_0$. This is a simple consequence of Lemma 6.3. Indeed, let $w(x)=u_\varepsilon(\varepsilon x)$ and $\psi_\varepsilon(x')=\varepsilon^{-1}\psi(\varepsilon x')$. Then
$-\operatorname{div}(A\nabla w)+\varepsilon Ww=\varepsilon^2\lambda w$ in $Z(\psi_\varepsilon,2)$ and
\begin{equation}\int_{Z(\psi_\varepsilon, 1)}\left|\nabla w\right|^{2} d x \geq c_{0} \int_{Z(\psi_\varepsilon, 2 )}\left|\nabla u_{\varepsilon}\right|^{2} d x.\end{equation}
Since $\varepsilon^{2} \lambda+\varepsilon||W||_{\infty}+\|\nabla A\|_{\infty}+\left\|\nabla^{2} \psi\right\|_{\infty} \leq C_{0}$, it follows from Lemma 6.3 that
\begin{equation}\int_{I(\psi_\varepsilon, 1)}\left|\nabla u_{\varepsilon}\right|^{2} d \sigma \geq \frac{c}{\varepsilon} \int_{Z(\psi_\varepsilon, 2 )}\left|\nabla w\right|^{2} d x,\end{equation} which gives $(6.15)$.
\end{rmk}
With the help of Remark 6.4, we are ready to prove Theorem 1.6. Note that the similar proof may be found in \cite{kenig2013estimates}, and we provide it for completeness.\\

\textbf{Proof of Theorem 1.6.}
For each $P\in \partial \Omega$, after translation and rotation, we may assume that $P=(0,0)$ and
\begin{equation*}\Omega \cap B\left(P, r_{0}\right)=\left\{\left(x^{\prime}, x_{d}\right) \in \mathbb{R}^{d}: x_{d}>\psi\left(x^{\prime}\right)\right\} \cap B\left(P, r_{0}\right),\end{equation*}
where $\psi(0)=|\nabla \psi(0)|=0$ and $|\nabla^2 \psi|\leq M$. For $0<r<cr_0$ with $r_0=\text{diam}(\Omega)$, let $(\triangle(P,r),D(P,r))$ denote the pair obtained from $(I(\psi,r)Z(\psi,r))$ by this change of the coordinate system.
If $0<\varepsilon <cr_0$, we may construct a finite sequence of pairs $(\triangle(P_i,\varepsilon),D(P_i,\varepsilon))$ satisfying the following conditions
\begin{equation*}
\partial \Omega=\bigcup_{i} \Delta\left(P_{i}, \varepsilon\right) \end{equation*}
and \begin{equation}
\sum_{i} \chi_{D\left(P_{i}, 2 \varepsilon\right)} \leq C \quad \text { and } \quad \Omega_{c \varepsilon} \subset \bigcup_{i} D\left(P_{i}, \varepsilon\right).
\end{equation}Denote $\triangle_i(r)=\triangle(P_i,r)$ and $D_i(r)=D_i(P,r)$.
Suppose now that $u_\varepsilon\in H^1_0(\Omega)$, $\mathcal{L}_\varepsilon(u_\varepsilon)=\lambda u_\varepsilon$ in $\Omega$ and $||u_\varepsilon||_{L^2(\Omega)}=1$. Assume that $\lambda>1$ and $\varepsilon\lambda\leq \delta$, where $\delta=\delta(A,W,\Omega)>0$ is sufficiently small. It follows from Lemma 6.2, $(5.22)$, $(5.23)$  and $(5.24)$ in the proof of Theorem 5.3 that

\begin{equation}c \lambda \leq \frac{1}{\varepsilon} \int_{\Omega_{c \varepsilon}}\left|\nabla u_{\varepsilon}\right|^{2} d x \leq \frac{1}{\varepsilon} \int_{\Omega_{2 \varepsilon}}\left|\nabla u_{\varepsilon}\right|^{2} d x \leq C \lambda.
\end{equation}
To obtain the lower bounds for $\int_{\partial \Omega}|\nabla u_\varepsilon|^2d\sigma$, we divide $\{D_i(\varepsilon)\}$ into two groups. Say $i\in J$
if \begin{equation}\int_{D_{i}(2 \varepsilon)}\left|\nabla u_{\varepsilon}\right|^{2} d x \leq N \int_{D_{i}(\varepsilon)}\left|\nabla u_{\varepsilon}\right|^{2} d x\end{equation}
with a large constant $N=N(A,W,\Omega)$ to be determined later. Note that if $i\in J$, then by Remark 6.4,
\begin{equation}\int_{\Delta_{i}(\varepsilon)}\left|\nabla u_{\varepsilon}\right|^{2} d \sigma \geq \frac{C_1}{\varepsilon} \int_{D_{i}(\varepsilon)}\left|\nabla u_{\varepsilon}\right|^{2} d x,\end{equation}
where $C_1>0$ depends only on $A$, $W$ $\Omega$ and $N$. Consequently,
\begin{equation}\begin{aligned}
\int_{\partial \Omega}\left|\nabla u_{\varepsilon}\right|^{2} d \sigma & \geq \frac{c C_1}{\varepsilon} \int_{U_{i \in J} D_{i}(\varepsilon)}\left|\nabla u_{\varepsilon}\right|^{2} d x \\
& \geq \frac{c C_1}{\varepsilon}\left\{\int_{\Omega_{c \varepsilon}}\left|\nabla u_{\varepsilon}\right|^{2} d x-\int_{U_{i \notin J} D_{i}(\varepsilon)}\left|\nabla u_{\varepsilon}\right|^{2} d x\right\} \\
& \geq \frac{c C_1}{\varepsilon}\left\{c \varepsilon \lambda-\int_{U_{i \notin J} D_{i}(\varepsilon)}\left|\nabla u_{\varepsilon}\right|^{2} d x\right\},
\end{aligned}\end{equation}
where we have used the fact $\Omega_{c\varepsilon}\subset \cup_i D_i(\varepsilon)$ and the estimate $(6.19)$.
Finally, in view of the definition of $J$,
\begin{equation}\int_{\cup_{i \notin J} D_{i}(\varepsilon)}\left|\nabla u_{\varepsilon}\right|^{2} d x \leq \frac{C}{N} \int_{\cup_{i \notin J} D_{i}(2 \varepsilon)}\left|\nabla u_{\varepsilon}\right|^{2} d x \leq \frac{C}{N} \int_{\Omega_{2 \varepsilon}}\left|\nabla u_{\varepsilon}\right|^{2} d x \leq \frac{C \varepsilon \lambda}{N},\end{equation}
where we have used the fact $\cup_i D_i(2\varepsilon)\subset \Omega_{2\varepsilon}$. Thus, in view of $(6.22)$, we have
\begin{equation}\int_{\partial \Omega}\left|\nabla u_{\varepsilon}\right|^{2} d \sigma \geq c \gamma \lambda\left\{c-C N^{-1}\right\} \geq c \lambda,\end{equation}
if $N=N(A,W,\Omega)$ is sufficient large. Then the proof is complete.

\normalem\bibliography{rapidly_oscillating}
\bibliographystyle{plain}

\end{document}